\crefname{section}{Section}{Sections}
\crefname{subsection}{\S}{\S\S}
\crefname{subsubsection}{\S}{\S\S}
\theoremstyle{plain}
\newtheorem{lemma}{Lemma}[section]
\newtheorem{proposition}[lemma]{Proposition}
\newtheorem{corollary}[lemma]{Corollary}
\newtheorem{theorem}[lemma]{Theorem}
\theoremstyle{nonumberplain}
\newtheorem{theoremN}{Theorem}
\newtheorem{propositionN}{Proposition}
\theoremstyle{plain}
\newtheorem{definition}[lemma]{Definition}
\newtheorem{example}[lemma]{Example}
\newtheorem{examples}[lemma]{Examples}
\newtheorem{remark}[lemma]{Remark}
\newtheorem{remarks}[lemma]{Remarks}
\newtheorem{convention}[lemma]{Convention}
\crefname{definition}{definition}{definitions}
\crefname{ex}{example}{examples}
\crefname{remark}{remark}{remarks}
\crefname{remarks}{remark}{remarks}
\crefname{convention}{convention}{conventions}
\crefname{notation}{notation}{notations}
\crefname{table}{table}{tables}
\crefname{lemma}{lemma}{lemmas}
\crefname{proposition}{proposition}{propositions}
\crefname{corollary}{corollary}{corollaries}
\crefname{theorem}{theorem}{theorems}
\crefname{enumi}{}{}
\crefname{assumption}{assumption}{Assumptions}
\crefname{equation}{}{}
\numberwithin{equation}{section}
\theoremstyle{nonumberplain}
\newtheorem{proof}{Proof}
\newcommand\pf[1]{\newtheorem{#1}{Proof of \Cref{#1}}}
\newcommand\bC{{\mathbb C}}
\newcommand\bR{{\mathbb R}}
\newcommand\bS{{\mathbb S}}
\newcommand\bZ{{\mathbb Z}}
\newcommand\cC{{\mathcal C}}
\newcommand\cD{{\mathcal D}}
\newcommand\cF{{\mathcal F}}
\newcommand\cG{{\mathcal G}}
\newcommand\cH{{\mathcal H}}
\newcommand\cM{{\mathcal M}}
\newcommand\cS{{\mathcal S}}
\newcommand\cV{{\mathcal V}}
\DeclareMathOperator{\id}{id}
\newcommand{\cat}[1]{\textsc{#1}}
\newcommand{\qedhere}{\mbox{}\hfill\ensuremath{\blacksquare}}
\title{Metric enrichment, finite generation, and the path comonad}
\author{Alexandru Chirvasitu}
\begin{document}

\date{}

\newcommand{\Addresses}{{
  \bigskip
  \footnotesize

  \textsc{Department of Mathematics, University at Buffalo, Buffalo,
    NY 14260-2900, USA}\par\nopagebreak \textit{E-mail address}:
  \texttt{achirvas@buffalo.edu}

}}

\maketitle

\begin{abstract}
  We prove a number of results involving categories enriched over \textsc{CMet}, the category of complete metric spaces with possibly infinite distances. The category \textsc{CPMet} of intrinsic complete metric spaces is locally $\aleph_1$-presentable, closed monoidal, and comonadic over \textsc{CMet}. We also prove that the category \textsc{CCMet} of convex complete metric spaces is not closed monoidal and characterize the isometry-$\aleph_0$-generated objects in \textsc{CMet}, \textsc{CPMet} and \textsc{CCMet}, answering questions by Di Liberti and Rosick\'{y}. Other results include the automatic completeness of a colimit of bi-Lipschitz morphisms of complete metric spaces and a characterization of those pairs (metric space, unital $C^*$-algebra) that have a tensor product in the \textsc{CMet}-enriched category of unital $C^*$-algebras.
\end{abstract}

\noindent {\em Key words: complete metric space; intrinsic metric; gluing; convex; monoidal closed; enriched; tensored; locally presentable; locally generated; colimit; internal hom}

\vspace{.5cm}

\noindent{MSC 2020: 54E50; 54E40; 51F30; 18A30; 18C35; 18D20; 18D15; 46L05; 46L09}

\tableofcontents

\section*{Introduction}

Denote by \cat{CMet} the category of complete metric spaces with non-expansive maps as morphisms, and distance functions allowed infinite values (see \Cref{cv:met}). \cite[Example 2.3 (2)]{ar-ap} notes that \cat{CMet} is {\it symmetric monoidal closed} \cite[\S\S 1.1, 1.4, 1.5]{kly}, so it is a good candidate category for {\it enriching} over in the sense of \cite{kly}.

Many categories of interest in functional analysis are {\it \cat{CMet}-enriched} or {\it \cat{CMet}-categories} in the sense of \cite[\S 1.2]{kly}: for every two objects $x,y\in \cC$ in the category of interest there is a {\it morphism object} $[x,y]\in \cat{CMet}$, there is an associative composition
\begin{equation*}
  [y,z]\otimes [x,y]\to [x,z]
\end{equation*}
for an appropriate monoidal structure on $\cat{CMet}$, etc. Natural examples are in rich supply:
\begin{itemize}
\item \cat{CMet} is self-enriched, the space of non-expansive maps between two complete metric spaces being metrized with the supremum distance;
\item \cat{Ban}, consisting of Banach spaces and linear maps of norm $\le 1$;
\item the category $\cat{BanAlg}_1$ of (complex) unital Banach algebras or its variations $\cat{BanAlg}^*_1$ (unital complex Banach $*$-algebras), $\cat{BanAlg}_{c,1}$ ({\it commutative} unital Banach algebras), etc.;
\item $\cC^*_1$, the category of unital $C^*$-algebras, or $\cC^*_{c,1}$, that of {\it commutative} unital $C^*$-algebras. 
\end{itemize}

Such metric-flavored category-theoretic considerations are by now pervasive in the literature: in discussing universal (Gurarii) Banach spaces \cite{kub-enr,lup-fr}, or universal operators thereon \cite{gk-univ}, or more general issues of approximate embeddability \cite{rt,ar-ap}; these are only a handful of examples, each with its own extensive cited literature.

The initial motivation for the present paper were a number of questions arising naturally in \cite{dr}, in studying {\it local generation} in this enriched setting. Roughly speaking, an object $x$ in a category $\cC$ is $\kappa$-generated for a cardinal $\kappa$ if $\mathrm{hom}_{\cC}(x,-)$ preserves ``sufficiently directed'' colimits; when the category is $\cV$-enriched one can instead consider
\begin{equation*}
  [x,-]:\cC\to \cV,
\end{equation*}
leading to the notion studied in loc.cit. Formally, aggregating, say, \cite[Definition 1.13]{ar} and \cite[Definitions 2.1 and 4.1]{dr}:

\begin{definition}\label{def:gen}
  Let $\kappa$ be a regular cardinal.
  \begin{itemize}
  \item A poset $(I,\le)$ is {\it $\kappa$-directed} if every subset of $I$ of cardinality $<\kappa$ as an upper bound.
  \item A {\it $\kappa$-directed colimit} in a category is a colimit of a functor defined on a $\kappa$-directed poset (regarded as a category, with an arrow $i\to j$ when $i\le j$).
  \item An object $x\in \cC$ in a category is {\it $\cM$-$\kappa$-generated} for a class of morphisms $\cM$ if
    \begin{equation*}
      \mathrm{hom}(x,-):\cC\to \cat{Set}
    \end{equation*}
    preserves $\kappa$-directed colimits of morphisms in $\cM$.
  \item Similarly, if $\cC$ is $\cV$-enriched, $x$ is {\it $\cM$-$\kappa$-generated in the enriched sense} (or {\it enriched $\cM$-$\kappa$-generated}) if the above colimit-preservation condition holds for the enriched-hom functor
    \begin{equation*}
      [x,-]:\cC\to \cV
    \end{equation*}
    instead.
  \end{itemize}
\end{definition}
Being $\kappa$-generated is a kind of smallness condition: in, say, categories of modules over rings, it literally means being generated by fewer than $\kappa$ elements \cite[Proposition 3.10]{ar}. For that reason, it is also customary to refer to $\aleph_0$-generated objects as {\it finitely generated}; this is the finite generation of the paper's title.

\cite[Remark 6.9]{dr} briefly considers \cat{CCMet} as another candidate to enrich over: this is the category of complete {\it convex} metric spaces, i.e. those for which pairs of points a finite distance apart can be connected by curves that realize that distance (this differs slightly from the definition adopted in \cite{dr}; see \Cref{def:cvx} and surrounding discussion).

Given that the finite segments $[0,\ell]\in \cat{CCMet}$ are in a sense the basic building blocks of \cat{CCMet}, it is natural to ask, as \cite[Remark 6.9]{dr} does, whether they are enriched-finitely-generated in the sense of \Cref{def:gen}, with respect to the class of isometries. It turns out that not only is the answer negative, but finite generation is rather difficult to come by in any of the categories of interest. Summarizing \Cref{th:fgen,th:fgenp} and \Cref{cor:fgenc}:

\begin{theoremN}
  In any of the categories
  \begin{itemize}
  \item \cat{CMet} of complete metric spaces;
  \item \cat{CPMet} of complete {\it path} metric spaces;
  \item or \cat{CCMet} of complete convex metric spaces
  \end{itemize}
  the isometry-$\aleph_0$-generated objects are precisely the finite discrete metric spaces, i.e. those with all pairwise distances infinite.  \qedhere
\end{theoremN}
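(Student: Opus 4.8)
Throughout I read ``isometry-$\aleph_0$-generated'' in the \emph{enriched} sense of \Cref{def:gen}: the set-theoretic notion is degenerate here, since the forgetful functors to $\cat{Set}$ already fail to preserve $\aleph_0$-directed colimits of isometries (completion adds points), so the content lies in asking whether $[x,-]\colon\cC\to\cat{CMet}$ preserves such colimits. The first step is to see how these colimits are computed. Since $\cat{CMet}$ is reflective in all metric spaces via completion, and each of $\cat{CPMet},\cat{CCMet}$ is closed in $\cat{CMet}$ under directed unions of isometries (a directed union of path/convex spaces is again one, and completion preserves being a length/geodesic space), the colimit of an $\aleph_0$-directed system of isometries $(y_i)$ is, on underlying metric spaces, the completion $C:=\overline{\bigcup_i y_i}$. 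Writing $[x,y]$ for the set of non-expansive maps with the sup-metric, the transition maps $[x,y_i]\to[x,y_j]$ are again isometries, so $\operatorname*{colim}_i[x,y_i]=\overline{\bigcup_i[x,y_i]}$ sitting inside $[x,C]$; hence $[x,-]$ preserves the colimit iff every non-expansive $f\colon x\to C$ is a \emph{uniform limit} of non-expansive maps that factor through some $y_i$. It is this approximation property I will test.

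For the easy inclusion, let $x$ be finite with all distances $\infty$ and $n:=|x|$. Then $[x,y]=y^{\times n}$, the $n$-fold power with the sup-metric, and this holds in all three categories: the $n$-fold categorical product of a path (resp.\ geodesic) space with the sup-metric is again a path (resp.\ geodesic) space, so it also computes the product in $\cat{CPMet}$ and $\cat{CCMet}$. Since completion commutes with finite powers in the sup-metric, $[x,C]=C^{\times n}=(\overline{\bigcup_i y_i})^{\times n}=\overline{(\bigcup_i y_i)^{\times n}}=\overline{\bigcup_i y_i^{\times n}}=\overline{\bigcup_i[x,y_i]}$, using that a finite tuple drawn from a directed union lies in a single stage. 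Thus the canonical comparison map is an isomorphism and $x$ is enriched-isometry-$\aleph_0$-generated.

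The substance is the converse. Suppose $x$ is not finite discrete; I exhibit an $\aleph_0$-directed system of isometries whose colimit is not preserved, using two constructions. \emph{Construction 1 (rigidity of finite targets).} Take the system of all finite subspaces of $x$, directed by inclusion; its colimit is $x$ itself, and $g\colon x\to x$ factors through a stage iff it has finite image, so I must show $\mathrm{id}_x$ is not a uniform limit of finite-image non-expansive self-maps. This holds whenever $x$ has no finite $\varepsilon$-net for some $\varepsilon>0$ (in particular whenever $x$ has infinitely many $\infty$-distance classes, or ``galaxies''), since the image of such a $g$ is then an $\varepsilon$-net; and it holds whenever $x$ has a galaxy $G$ with $\ge2$ points that is well-chained (e.g.\ connected), because a non-expansive map from a well-chained space to a finite metric space is constant, so any finite-image $g$ is constant on $G$ and thus at sup-distance $\ge\tfrac12\mathrm{diam}(G)>0$ from $\mathrm{id}_x$. \emph{Construction 2 (a rigid two-point retract).} Suppose instead there are $p\neq q$ with $0<d(p,q)=r<\infty$ admitting no chain from $p$ to $q$ with all steps $<r$. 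Sending every point $<r$-chain-reachable from $p$ to $p$ and everything else to $q$ gives a non-expansive retraction $x\to\{p,q\}$ onto the two-point space $T_r$ with $d(0,r)=r$; as a retract of an enriched-$\aleph_0$-generated object is again one (its enriched hom is a retract of that of $x$, hence inherits the colimit-preservation), it suffices to show $T_r$ is not enriched-$\aleph_0$-generated. Put $C:=\{0\}\cup\{-1/n:n\ge1\}\cup\{r\}\cup\{r+1/n:n\ge1\}\subseteq\bR$, which is closed hence complete, with stages $C_j:=\{-1/n,\,r+1/n:1\le n\le j\}$, which are finite hence complete and satisfy $\overline{\bigcup_j C_j}=C$; and consider $f=(0,r)\in[T_r,C]$. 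A morphism $T_r\to C_j$ is a pair $(a,b)$ with $a,b\in\bigcup_j C_j$ and $|a-b|\le r$; if $|a|<r/4$ then $a=-1/n<0$, and if $|b-r|<r/4$ then $b=r+1/m>r$, whence $|a-b|=r+1/n+1/m>r$, a contradiction. So $f$ lies at sup-distance $\ge r/4$ from every morphism factoring through a stage, and $T_r$ is not enriched-$\aleph_0$-generated.

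It remains to check the two constructions cover every non-finite-discrete $x$. If $x$ is not totally bounded, Construction 1 applies. If $x$ is compact (complete $+$ totally bounded) and finite it is finite non-discrete, and taking $r$ a minimal positive distance puts us in Construction 2 (no chain can have a positive step shorter than $r$). If $x$ is compact and infinite it has a multi-point galaxy $G$, compact, which is either connected — Construction 1 — or admits a clopen partition $G=G_1\sqcup G_2$, in which case points $p_i\in G_i$ realizing the positive distance $r=d(G_1,G_2)$ admit no $<r$-chain, so Construction 2 applies. In $\cat{CPMet}$ and $\cat{CCMet}$ this simplifies drastically: ``not finite discrete'' means ``infinite'', and a multi-point galaxy of a path metric space is path-connected, so Construction 1 alone handles the converse there, the easy inclusion being exactly as above. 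The main obstacle I anticipate is precisely this bookkeeping — confirming that every complete metric space falls under one of the two mechanisms — together with the routine but genuinely needed verifications that directed colimits of isometries are computed by completion in each category, that products of path/geodesic spaces with the sup-metric stay in the subcategory, and that retracts transport enriched finite generation.
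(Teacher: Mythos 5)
Your argument for \cat{CMet} is essentially sound and runs close to the paper's own: reduce to a space with finitely many finite-distance components, extract total boundedness (hence compactness) from approximability of the identity by finite-image maps, and retract onto a two-point space ${\bf 2}_r$ with $0<r<\infty$. Your explicit system $C=\{0\}\cup\{-1/n\}\cup\{r\}\cup\{r+1/n\}$ is a pleasant self-contained substitute for the appeal to \cite[Proposition 5.19]{ar-ap} that the paper makes at the corresponding step, and your case analysis (not totally bounded / finite non-discrete / compact with a connected multi-point galaxy / compact with a disconnected one) does cover all non-finite-discrete complete spaces.

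The genuine gap is in \cat{CPMet} and \cat{CCMet}. Being isometry-$\aleph_0$-generated in one of those categories quantifies only over $\aleph_0$-directed systems of isometries \emph{between objects of that category}. Construction 1 is built on the directed system of all finite subspaces of $x$, and a finite metric space with a finite nonzero distance is never a path (let alone convex) metric space: its intrinsic metric is infinite off the diagonal. So for an infinite intrinsic $x$ that system is simply not a diagram in \cat{CPMet} or \cat{CCMet}, and the non-approximability of $\id_x$ by finite-image maps proves nothing about generation there; your sentence ``Construction 1 alone handles the converse there'' is where the proof breaks. (Construction 2 is unavailable for the same reason: neither ${\bf 2}_r$ nor your target $C$ is intrinsic.) This is precisely the obstruction the paper flags at the start of the proof of \Cref{th:fgenp} --- one cannot work with finite spaces since every object in sight must be intrinsic --- and it is why that proof builds a bespoke directed system of \emph{intrinsic} spaces $Y_n\subset Y$: glue a length-$1$ segment $\gamma_x$ to each point $x\in X$, join the outer portions of distinct segments by metric segments $\gamma_{p,q}$ realizing their distances, let $Y_n$ consist of the points at distance at least $1/n$ from $X$ together with the connectors between them, and show that the isometric embedding $X\hookrightarrow Y=\varinjlim Y_n$ is not uniformly approximable by morphisms into any $Y_n$ (convexity of $X$ is then inherited by $Y$ and the $Y_n$, which gives the \cat{CCMet} case). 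Some replacement of this kind is indispensable, so the \cat{CPMet} and \cat{CCMet} halves of your proof are missing their main construction. Separately, you assert but do not verify that directed colimits of isometries in \cat{CCMet} are computed as completions of unions; that the completion of a directed union of geodesic spaces along isometries is again geodesic is not obvious and deserves an argument before the easy direction can be considered settled there.
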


This also generalizes \cite[Proposition 5.19]{ar-ap}, which proves that in \cat{CMet}, the only isometry-$\aleph_0$-generated finite spaces are the discrete ones (i.e. in that statement finiteness is assumed).

{\it Path} or {\it intrinsic} metric spaces are recalled in \Cref{def:lspace}: they are those for which points a finite distance $\ell$ apart are connectable with curves of length arbitrarily close to $\ell$; they thus intermediate between plain (complete) metric spaces and convex ones.

The appearance of \cat{CPMet} in the discussion is at least in part motivated by another question asked in \cite{dr} (immediately preceding \cite[Remark 6.10]{dr}): whether \cat{CCMet} is monoidal closed. It is not (\Cref{ex:ccmetnclosed}), but essentially because the right adjoint to the inclusion functor
\begin{equation*}
  \iota:\cat{CPMet}\subset \cat{CMet}
\end{equation*}
fails, in general, to produce convex spaces: see \Cref{pr:whencorefl} and \Cref{cor:ihomchar}. \cat{CPMet}, on the other hand, is much better behaved; coalescing \Cref{le:cpmetmon}, \Cref{cor:cpclosed}, and \Cref{th:cpcomon,th:cppres}:

\begin{theoremN}
  The full subcategory
  \begin{equation*}
    \cat{CPMet}\subset \cat{CMet}
  \end{equation*}
  of complete path metric spaces is
  \begin{itemize}
  \item locally $\aleph_1$-presentable (so in particular complete and cocomplete);
  \item and closed monoidal. 
  \end{itemize}
  Furthermore, the inclusion functor is comonadic (\Cref{def:comon}), so in particular a left adjoint.  \qedhere
\end{theoremN}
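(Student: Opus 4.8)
The statement collects four assertions (restriction of the monoidal structure, comonadicity of the inclusion, closedness, local presentability), which I would establish in turn. \emph{Monoidality.} The monoidal product on $\cat{CMet}$ has underlying set $X\times Y$ with the additive metric $d((x,y),(x',y'))=d_X(x,x')+d_Y(y,y')$ --- already complete when $X,Y$ are --- and unit the one-point space. If $X,Y$ are path metric spaces, so is $X\otimes Y$: given $(x,y),(x',y')$ a finite distance $a+b$ apart with $a=d_X(x,x')$, $b=d_Y(y,y')$, and $\varepsilon>0$, choose curves from $x$ to $x'$ of length $\le a+\varepsilon$ and from $y$ to $y'$ of length $\le b+\varepsilon$; running the first inside $X\times\{y\}$ and then the second inside $\{x'\}\times Y$ produces a curve from $(x,y)$ to $(x',y')$ of length $\le a+b+2\varepsilon$. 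The one-point space is trivially a path space, so $\cat{CPMet}$ is a (symmetric) monoidal subcategory of $\cat{CMet}$.

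\emph{The coreflector and comonadicity.} For $X\in\cat{CMet}$ let $d_i\ge d$ be its intrinsic (length) metric, $d_i(x,y)$ being the infimum of lengths of curves from $x$ to $y$ ($\inf\emptyset=\infty$). Standard metric geometry gives that $(X,d_i)$ is a length space (the length of a curve is unchanged upon recomputing it in $d_i$) and that the completion of a length space is a length space; hence $RX$, the completion of $(X,d_i)$, lies in $\cat{CPMet}$, and the non-expansive map $\varepsilon_X:RX\to X$ extending $\id_X:(X,d_i)\to(X,d)$ exhibits $R$ as right adjoint to $\iota$: a non-expansive $f:P\to X$ with $P\in\cat{CPMet}$ carries a length-$\le\ell$ curve in $P$ to one in $X$, so $d_i^X(fp,fp')\le d_i^P(p,p')=d_P(p,p')$, i.e.\ $f$ is non-expansive into $(X,d_i)$ and hence factors uniquely through $RX$ by completeness of $P$. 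Thus $\iota$ is a left adjoint; since $\iota$ is moreover fully faithful, the comonad $\iota R$ on $\cat{CMet}$ is idempotent, its coalgebras are precisely the objects on which the counit $\varepsilon_X$ is invertible --- precisely the complete path metric spaces --- and the comparison functor is this identification, so $\iota$ is comonadic. (Alternatively, one may simply invoke that a coreflective full subcategory inclusion is always comonadic.)

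\emph{Closedness and local presentability.} For $Y\in\cat{CPMet}$ set $[Y,Z]_{\cat{CPMet}}:=R[Y,Z]_{\cat{CMet}}$; combining fullness of $\iota$, the fact that $X\otimes Y\in\cat{CPMet}$ whenever $X,Y\in\cat{CPMet}$, closedness of $\cat{CMet}$, and $\iota\dashv R$ yields natural bijections
\begin{equation*}
  \cat{CPMet}(X\otimes Y,Z)\;\cong\;\cat{CMet}(X\otimes Y,Z)\;\cong\;\cat{CMet}(X,[Y,Z]_{\cat{CMet}})\;\cong\;\cat{CPMet}(X,[Y,Z]_{\cat{CPMet}}),
\end{equation*}
so $\cat{CPMet}$ is closed monoidal. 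For local $\aleph_1$-presentability: being a left adjoint, $\iota$ preserves colimits; concretely, $\cat{CPMet}$ is closed under colimits in $\cat{CMet}$ --- coproducts are disjoint unions with infinite cross-distances, and coequalizers and pushouts are path spaces because a finite chain realizing a quotient distance up to $\varepsilon$ can be upgraded segment by segment to a concatenation of curves in the pieces of nearly the same total length. Since $\cat{CMet}$ is locally $\aleph_1$-presentable, $\cat{CPMet}$ is therefore cocomplete with ($\aleph_1$-filtered) colimits computed as in $\cat{CMet}$. A Löwenheim--Skolem argument --- iteratively closing a countable set, over $\omega$ stages, under adding curves of length $\le d(p,q)+1/m$ for $p,q$ in a nested family of countable dense subsets, then taking the closure --- shows every complete path space is the $\aleph_1$-directed union of its separable complete path subspaces; each such subspace is $\aleph_1$-presentable in $\cat{CPMet}$ because homs out of it and $\aleph_1$-filtered colimits in $\cat{CPMet}$ coincide with their $\cat{CMet}$ counterparts, where separable spaces are $\aleph_1$-presentable. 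Hence $\cat{CPMet}$ is $\aleph_1$-accessible and cocomplete, so locally $\aleph_1$-presentable.

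\emph{Main obstacle.} Everything formal above rests on a single geometric input: the class of path metric spaces is closed under the operations in play --- completion, passage to the intrinsic metric, and colimits (gluings). The substantive work is verifying that completions and pushouts of length spaces are length spaces; granting that, existence and target of the coreflector, comonadicity, closedness, and accessibility all follow by formal manipulation.
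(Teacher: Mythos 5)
Your proposal covers all four claims and is essentially sound, but two of them are proved by genuinely different means than the paper. For comonadicity, the paper (\Cref{th:cpcomon}) runs the dual of Beck's theorem: it checks that reflexive pairs in \cat{CPMet} whose images have contractible equalizers in \cat{CMet} have those equalizers computed in \cat{CPMet}, because the equalizer is a retract and retracts of intrinsic spaces are intrinsic. You instead invoke the general fact that a fully faithful coreflective inclusion is comonadic (the comonad $\iota R$ is idempotent and its coalgebras are the objects on which the counit is invertible); this is cleaner and equally valid. For local $\aleph_1$-presentability, the paper (\Cref{th:cppres}) exhibits a strong generator consisting of the segments $[0,\ell]$ and verifies extremality of the canonical epimorphism from the coproduct of all rectifiable curves, using that monomorphisms are injections and that a bijective contraction onto a path space cannot strictly decrease any finite distance. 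You instead prove $\aleph_1$-accessibility directly, writing every complete path space as the $\aleph_1$-directed union of separable complete path subspaces produced by a closing-off argument. That route is workable, but note that your closure only guarantees almost-length-minimizing curves between points of the dense set you iterate over; to conclude the closed subspace is intrinsic for \emph{all} of its point pairs you need a splicing argument (curves of geometrically decreasing length concatenated and extended by completeness), which is exactly the technique of the paper's \Cref{pr:whencorefl}\Cref{item:6}. Both routes rest on the same geometric inputs: closure of \cat{CPMet} under colimits of \cat{CMet}, i.e.\ that gluings and completions of length spaces are length spaces.

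The one genuine soft spot is your construction of the coreflector. You define $RX$ as the completion of $(X,d_i)$ and assert that every $f:P\to X$ with $P$ intrinsic ``factors uniquely through $RX$ by completeness of $P$.'' Existence of the factorization is fine (compose $f:P\to(X,d_i)$ with the embedding into the completion; completeness of $P$ plays no role), but \emph{uniqueness} requires the counit $\varepsilon_X:RX\to X$ to be monic, and in general the induced map from the completion of a finer metric to a coarser complete metric on the same set need not be injective: two $d_i$-Cauchy sequences can have the same $d$-limit while remaining uniformly $d_i$-separated. What rules this out here is precisely the fact that $(X,d_i)$ is \emph{already complete} when $(X,d)$ is --- the nontrivial content of \Cref{pr:whencorefl}\Cref{item:6}, proved by splicing $1$-Lipschitz curves of lengths $4^{-k}$ --- so that $RX=(X,d_i)$ and $\varepsilon_X$ is a bijection. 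The completion detour therefore does not avoid that argument; without it the universal property, and hence the adjunction underpinning closedness and comonadicity, is not fully verified. This is repairable, but the step should be supplied.
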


We highlight a number of pathologies in otherwise well-behaved metric-enriched categories:
\begin{itemize}
\item the failure of \cat{CCMet} to be monoidal closed in \Cref{ex:ccmetnclosed};
\item the paucity of $\aleph_0$-generated objects in \cat{CPMet} or \cat{CCMet} (\Cref{th:fgenp} and \Cref{cor:fgenc}).
\end{itemize}
All of this requires piecing together metric spaces by the {\it gluing} process of \cite[\S 3.1.2]{bbi} (see \Cref{subse:glue} below). Gluing, say, metric spaces $X_i$, $i=1,2$ along a common subspace is nothing but a pushout in the category \cat{Met} of (perhaps incomplete) metric spaces, and our examples need such colimits to have various desired properties (completeness, convexity, etc.). This is ensured by a number of auxiliary results I have not been able to locate in the literature.

To state a joint summary of \Cref{th:largepush,th:tree}, recall (e.g. \cite[Definition 1.4.6]{bbi}) that a map
\begin{equation*}
  f:(X,d_X)\to (Y,d_Y)
\end{equation*}
in \cat{Met} is {\it bi-Lipschitz} if there are both bounds to how much it can scale distances, either up or down: for some $C,C'>0$ we have 
\begin{equation*}
  C d_X(x,x')\le d_Y(fx,fx')\le C' d_X(x,x'),\ \forall x,x'\in X.
\end{equation*}
For added precision, we incorporate the constants into the term and call such maps {\it $(C,C')$-bi-Lipschitz}. The two aforementioned theorems then amalgamate to

\begin{theoremN}
  Let $\Gamma$ be an oriented forest (in the graph-theoretic sense) of finite diameter $D$ and
  \begin{equation*}
    F:\Gamma\to \cat{CMet}
  \end{equation*}
  a functor consisting of $(C,1)$-bi-Lipschitz morphisms.
  \begin{enumerate}[(a)]
  \item The colimit $(X,d):=\varinjlim F$ of $F$ in \cat{Met} is then automatically complete, and hence also a colimit in \cat{CMet}.
  \item And the canonical morphisms
    \begin{equation*}
      F(v)\to X,\ \text{$v$ a vertex of $\Gamma$}
    \end{equation*}
    are $(C',1)$-bi-Lipschitz with $C'$ depending only on $C$ and the diameter $D$.  \qedhere
  \end{enumerate}
\end{theoremN}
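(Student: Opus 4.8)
The plan is to extract one reusable gluing lemma and then to peel the forest apart along it.

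\emph{Reduction and the key lemma.} A colimit over a disconnected diagram is the coproduct of the colimits over its components, and in \cat{Met} a coproduct of complete spaces is complete with isometric coprojections; so one may assume $\Gamma$ is a single oriented tree of diameter $\le D$. Everything rests on a lemma about \emph{wide pushouts}: if $A$ and all $B_i$ ($i$ ranging over an arbitrary index set) are complete metric spaces and each $f_i\colon A\to B_i$ is $(\gamma,1)$-bi-Lipschitz, then the colimit $P:=\varinjlim\bigl(\{B_i\xleftarrow{f_i}A\}_i\bigr)$ in \cat{Met} is again complete, and all the canonical maps $A\to P$ and $B_i\to P$ are $(\gamma,1)$-bi-Lipschitz. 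Granting this, the theorem follows by an induction on depth after rooting $\Gamma$ at its centre.

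\emph{The recursion.} Root $\Gamma$ at a central vertex $\rho$, so every vertex is within $\lceil D/2\rceil$ of $\rho$; induct on this depth. Depth $0$ is a point. Otherwise let the children of $\rho$ be the $c_i$, let $\Gamma_i$ be the subtree hanging below $c_i$ (of strictly smaller depth) and $\Gamma_i^+:=\Gamma_i\cup\{\rho\}$, reattaching $\rho$ and the edge $\rho c_i$. Since the $\Gamma_i^+$ cover $\Gamma$ and pairwise meet only in $\rho$, one gets $\varinjlim_\Gamma F=\varinjlim\bigl(\{F(\rho)\to\varinjlim_{\Gamma_i^+}F\}_i\bigr)$, a wide pushout along $F(\rho)$ whose legs are cone maps. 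And $\varinjlim_{\Gamma_i^+}F$ is obtained from $\varinjlim_{\Gamma_i}F$ — complete with $(C_{k-1},1)$-bi-Lipschitz cone maps by induction — in one further step: if the edge is oriented $\rho\to c_i$ then $\varinjlim_{\Gamma_i^+}F=\varinjlim_{\Gamma_i}F$ and the new map out of $F(\rho)$ is the cone map composed with the edge map; if it is oriented $c_i\to\rho$ then $\varinjlim_{\Gamma_i^+}F=\varinjlim_{\Gamma_i}F\sqcup_{F(c_i)}F(\rho)$, a one-edge pushout already covered by the key lemma. Either way $F(\rho)\to\varinjlim_{\Gamma_i^+}F$ is $(\tilde C_{k-1},1)$-bi-Lipschitz with $\tilde C_{k-1}$ a function of $C$ and $C_{k-1}$; applying the key lemma to the top wide pushout then shows $\varinjlim_\Gamma F$ is complete — assertion (a), and hence also a colimit in \cat{CMet} — with cone maps $(C_k,1)$-bi-Lipschitz, and composing with the earlier maps gives the same bound for the cone map out of every original $F(v)$. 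The recursion has $\le\lceil D/2\rceil$ levels, each multiplying the constant by a factor controlled by $C$, so the final $C'=C_{\lceil D/2\rceil}$ depends only on $C$ and $D$ — assertion (b).

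\emph{Proof of the key lemma.} Realise $P$ concretely as $\bigsqcup_i B_i$ with the $f_i(a)$ identified across all $i$, metrised by the usual infimum over finite zig-zag chains. For the bi-Lipschitz bounds, fix $\beta,\beta'\in B_i$ and a chain between their images; ``reflect'' every portion of the chain lying in some $B_j$, $j\ne i$, or in the copy of $A$, into $B_i$ via $f_i$: a portion of length $\ell$ in $B_j$ running between $f_j(a)$ and $f_j(a')$ has $\ell\ge\gamma\,d_A(a,a')\ge\gamma\,d_{B_i}(f_ia,f_ia')$, since $f_j$ scales down by at most $\gamma$ and $f_i$ is non-expansive; the reflected portions chain up to an honest path in $B_i$ from $\beta$ to $\beta'$, whence the original chain has length $\ge\gamma\,d_{B_i}(\beta,\beta')$ (and the analogous bound holds with $\beta,\beta'\in A$). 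Non-expansiveness being automatic, all the structure maps are $(\gamma,1)$-bi-Lipschitz. For completeness take a Cauchy sequence with summable consecutive distances: if one $B_i$, or the copy of $A$, contains infinitely many terms, that subsequence pulls back along a bi-Lipschitz embedding to a Cauchy sequence in a complete space, so it converges and so does the whole sequence; otherwise extract a subsequence with terms in pairwise distinct $B_i$, and since a chain can only pass from one $B_i$ to another through the copy of $A$, each such term lies within its (summable) distance to the next of the copy of $A$, so the subsequence is shadowed by a Cauchy sequence inside the complete space $A$, which converges — and again so does the original sequence.

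\emph{Where the work is.} The bi-Lipschitz estimates are bookkeeping: reflect the excursions. The delicate point is completeness of the wide pushout for an \emph{infinite} index set — a Cauchy sequence may roam over infinitely many of the $B_i$, and one must see that it is then funnelled back onto the hub $A$. Finiteness of the diameter is not used in the lemma itself; it is precisely what makes the recursion terminate after boundedly many steps and keeps $C'$ a function of $C$ and $D$. Some diameter hypothesis is genuinely needed, since an infinite-diameter forest can be strung into an infinite path whose colimit fails to be complete.
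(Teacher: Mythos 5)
Your argument is correct and follows essentially the same route as the paper: your ``key lemma'' on wide pushouts is precisely \Cref{th:largepush}, established by the same devices (reflecting the excursions of a chain back into a single $B_i$ to get the $C$-expansivity of the structure maps, and the dichotomy ``a subsequence stays in one piece / the sequence is funnelled onto the hub'' for completeness), and the tree is then consumed by repeated application of that lemma exactly as in \Cref{th:tree}. The only divergence is organizational --- the paper inducts on the diameter by pruning leaf sources and then gluing leaf sinks onto a smaller-diameter core, whereas you root the tree at a central vertex and recurse on depth $\le\lceil D/2\rceil$ --- and both schemes terminate in a number of steps controlled by $D$, giving the same kind of constant $C'$.
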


Gluing is also helpful in rendering a metric space convex. This produces not quite a reflection of \cat{CMet} into \cat{CCMet}, but rather a {\it weak} reflection (it will not, in general, have the requisite universality property requisite of a reflection functor). Nevertheless, \Cref{pr:cvxcmpl} reads

\begin{propositionN}
  For any complete metric space $(X,d)\in \cat{CMet}$, attaching intervals of length $d(x,x')<\infty$ with endpoints $x,x'\in X$ for any point pair not already connected by such an interval produces a complete convex metric space.  \qedhere
\end{propositionN}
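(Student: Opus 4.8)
The plan is to realize the construction as a colimit in $\cat{Met}$ and then check completeness and convexity by hand, leaning on the explicit description of glued metrics recalled in \Cref{subse:glue} (following \cite[\S 3.1.2]{bbi}).

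\textbf{The construction as a gluing.} Write $\wt X$ for the space obtained by attaching to $X$, for every pair $x\ne x'$ with $\ell_{x,x'}:=d(x,x')<\infty$ not already joined in $X$ by an isometric copy of $[0,\ell_{x,x'}]$ with those endpoints, a fresh segment $I_{x,x'}:=[0,\ell_{x,x'}]$ glued along $0\mapsto x$ and $\ell_{x,x'}\mapsto x'$. This is the colimit in $\cat{Met}$ of the evident diagram of inclusions $\{0,\ell_{x,x'}\}\hookrightarrow I_{x,x'}$ amalgamated over $X$, and the gluing calculus of \Cref{subse:glue} computes $d_{\wt X}$ as an infimum of total lengths of finite chains whose consecutive members lie in a common piece (either $X$ or one of the attached segments). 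First I would collapse such chains using the triangle inequality in $X$ and the fact that each $I_{x,x'}$ has length exactly $d$ of its endpoints, obtaining: (i) $X\hookrightarrow\wt X$ is isometric, since a detour through $I_{z,z'}$ costs at least $d(z,z')$, no better than staying in $X$; (ii) for an interior point $p$ of $I_{x,x'}$ at arclength $s$ from $x$ and $q\in X$, $d_{\wt X}(p,q)=\min\bigl(s+d(x,q),\,(\ell_{x,x'}-s)+d(x',q)\bigr)$; and (iii) for interior points $p\in I_{x,x'}$, $q\in I_{y,y'}$ of distinct segments, $d_{\wt X}(p,q)$ equals the minimum of $d_{\wt X}(p,u)+d(u,v)+d_{\wt X}(q,v)$ over endpoints $u\in\{x,x'\}$, $v\in\{y,y'\}$, while within a single segment the distance is the arclength difference. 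These formulas in particular show $d_{\wt X}$ is a genuine metric (no two distinct points are identified), and that the nearest point of $X$ to an interior point $p\in I_{x,x'}$ is one of $x,x'$, at distance $\min(s,\ell_{x,x'}-s)$.

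\textbf{Convexity.} Given $p\ne q$ in $\wt X$ with $d_{\wt X}(p,q)<\infty$, I would produce a shortest curve by concatenation, distinguishing cases by the pieces containing $p,q$. If both lie in one segment, that segment is already geodesic. If $p,q\in X$, then $d(p,q)<\infty$, so by construction they are joined by an isometric copy of $[0,d(p,q)]$ in $\wt X$. If $p$ is interior to $I_{x,x'}$ and $q\in X$, concatenate the sub-segment of $I_{x,x'}$ from $p$ to the optimal endpoint $u\in\{x,x'\}$ of (ii) with a geodesic from $u$ to $q$ in $\wt X$ supplied by the previous case; by (ii) the two lengths sum to $d_{\wt X}(p,q)$. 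The case of $p,q$ interior to distinct segments is identical, now concatenating three geodesics along the optimal corner route of (iii). A curve whose length realizes the distance between its endpoints is a shortest curve, so $\wt X$ is convex in the sense of \Cref{def:cvx}.

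\textbf{Completeness.} This I expect to be the crux. Let $(p_n)$ be Cauchy in $\wt X$. If some attached segment — isometric to a compact interval, hence complete — contains infinitely many $p_n$, the corresponding subsequence converges there, and a Cauchy sequence with a convergent subsequence converges. Otherwise every attached segment contains only finitely many terms; if infinitely many $p_n$ lie in $X$, that subsequence is Cauchy in the complete space $X$, hence converges; and if only finitely many lie in $X$, pass to a subsequence of interior points lying in pairwise distinct segments. For such a subsequence, (iii) gives $d_{\wt X}(p_n,X)\le d_{\wt X}(p_n,p_m)$ for all large $n,m$ (the optimal corner $u$ of $p_n$ realizes the left side), so $d_{\wt X}(p_n,X)\to 0$; choosing $q_n\in X$ attaining this distance (an endpoint of $p_n$'s segment), the $q_n$ form a Cauchy sequence in $X$ by the triangle inequality, hence converge to some $q\in X$, and then $d_{\wt X}(p_n,q)\le d_{\wt X}(p_n,q_n)+d(q_n,q)\to 0$. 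In every case a subsequence of $(p_n)$ converges, so $(p_n)$ does, and $\wt X$ is complete.

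The only genuinely delicate step is the bookkeeping in the last case of completeness — excluding a Cauchy sequence that drifts forever among infinitely many attached segments — and it is precisely formula (iii), i.e. that an interior point of a segment communicates with the rest of $\wt X$ only through its two endpoints, that forces such a sequence to hug $X$ and hence converge there.
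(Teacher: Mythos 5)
Your proposal is correct and follows essentially the same route as the paper: realize the attachment as a gluing (colimit in \cat{Met}), show $X$ and each segment embed isometrically, derive the ``corner'' distance formulas forcing any path out of a segment's interior through its endpoints, concatenate geodesics for convexity, and run a case analysis on Cauchy sequences for completeness. The only difference is that where you re-derive these facts by hand for this special case, the paper simply invokes its general gluing results (\Cref{th:largepush} for the isometric embeddings and distance formulas, \Cref{th:tree} for automatic completeness of the tree-shaped colimit), whose proofs are precisely the arguments you reproduce.
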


As somewhat of a side-note, but in the same general circle of ideas, we identify in \Cref{se:aside} those pairs
\begin{equation*}
  X\in \cat{CMet},\quad C\in \cC^*_1 :=\text{unital $C^*$-algebras}
\end{equation*}
that have a {\it tensor product} $X\otimes C$. This is by definition a unital $C^*$-algebra that represents the functor
\begin{equation*}
  [X,[C,-]]:\cC^*_1\to \cat{CMet},
\end{equation*}
and whether or not such tensor products always exist in an enriched category is yet another measure of how convenient it is to work with ($\cV$-enriched categories admitting tensor products in this sense are called {\it $\cV$-tensored} \cite[\S 3.7]{kly}). In the context of metric enrichment, there is a discussion of the matter in \cite[\S 4]{ar-ap}.

The earlier \cite[Proposition 3.11]{ck} says that the category $\cC^*_{c,1}$ of {\it commutative} unital $C^*$-algebras is \cat{CMet}-tensored. On the other hand, \Cref{th:notensfull} below negates the existence of tensors in $\cC^*_1$ fairly strongly: in a sense, only the ``obvious'' ones exist.

\begin{theoremN}
  For a complete metric space $X\in\cat{CMet}$ and a unital $C^*$-algebra $C\in \cC^*_1$ the tensor product $X\otimes C\in \cC^*_1$ exists if and only if one of the following conditions holds:
  \begin{itemize}
  \item $X$ has cardinality $\le 1$;
  \item or $C$ has dimension $\le 1$.  \qedhere
  \end{itemize}  
\end{theoremN}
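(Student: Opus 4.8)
The plan is to settle the ``if'' direction by inspection and the ``only if'' direction by a \emph{distance amplification} argument: when $|X|\ge 2$ and $\dim C\ge 2$ I will exhibit two non-expansive families of $\ast$-homomorphisms $C\to D$ (for a cleverly chosen $D$) that stay close pointwise but whose products run far apart, contradicting the isometry part of representability. For the ``if'' direction one checks directly that $[X,[C,-]]$ is represented by $\bC$ if $X=\varnothing$, by $C$ if $|X|=1$, by $0$ if $C=0$ and $X\ne\varnothing$, and by $\bC$ if $C=\bC$; in each case the canonical bijections are isometries because the relevant target spaces are one-point or equal to $[C,-]$.

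For the ``only if'' direction, suppose $|X|\ge 2$, $\dim C\ge 2$, and that $A:=X\otimes C$ exists with unit $x\mapsto u_x\in\cC^*_1(C,A)$. Representability means that for each $D$ the map $\phi\mapsto(x\mapsto\phi\circ u_x)$ identifies $\cC^*_1(A,D)$ isometrically with the non-expansive maps $X\to[C,D]$, i.e.\ $\sup_{\|a\|\le1}\|\phi(a)-\psi(a)\|=\sup_x d_{[C,D]}(\phi u_x,\psi u_x)$. Fix $x_0\ne x_1$, put $t:=d_X(x_0,x_1)\in(0,\infty]$, and observe that $h\colon X\to[0,t]$, $h(x):=\min\{d_X(x,x_0),t\}$, is non-expansive with $h(x_0)=0$, $h(x_1)=t$. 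Hence it suffices to produce $D\in\cC^*_1$, non-expansive $k,k'\colon[0,t]\to[C,D]$, and $c,c'\in C$ with $\|c\|,\|c'\|\le1$ such that $\|k(0)(c)k(t)(c')-k'(0)(c)k'(t)(c')\|>\sup_s d_{[C,D]}(k(s),k'(s))$: taking $\phi,\psi$ to correspond to $k\circ h$ and $k'\circ h$, the left side equals $\|\phi(u_{x_0}(c)u_{x_1}(c'))-\psi(u_{x_0}(c)u_{x_1}(c'))\|\le\sup_{\|a\|\le1}\|\phi(a)-\psi(a)\|$ since $\|u_{x_0}(c)u_{x_1}(c')\|\le1$, while the right side dominates $\sup_x d_{[C,D]}(\phi u_x,\psi u_x)$ because $h(X)\subseteq[0,t]$ — contradicting the displayed identity. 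I will always take $k'(s):=k(t-s)$, so $k'(0)=k(t)$, $k'(t)=k(0)$ and the target quantity is the commutator norm $\|[\,k(0)(c),\,k(t)(c')\,]\|$, and I will use $k(s)=\mathrm{Ad}_{\exp(isG/2)}\circ\iota$ for a fixed unital $\ast$-homomorphism $\iota\colon C\to D$ and a self-adjoint $G\in D$ with $\|G\|\le1$ (which makes $k$ Lipschitz with constant $\le1$ for the norm-derived metric).

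The construction of $D,\iota,G$ splits by commutativity of $C$. If $C$ is commutative, $\dim C\ge2$ forces two distinct characters, hence a surjective unital $\ast$-homomorphism $q\colon C\twoheadrightarrow\bC^2$; composition with $q$ embeds $[\bC^2,D]$ isometrically into $[C,D]$, and in $D=M_2(\bC)$ — where $[\bC^2,M_2(\bC)]$ is the space of projections with metric $2\|p-p'\|$ — I take $\iota$ corresponding to the diagonal embedding $\bC^2\hookrightarrow M_2(\bC)$, $G$ a Pauli rotation generator, and $c=c'$ a norm-one preimage of $(1,-1)\in\bC^2$, so that $k(s)(c)$ runs through reflections $2p_s-\mathbf 1$ and the commutator of two reflections is, up to a rotation, controlled by the angle between them; a short computation gives amplification $2\sin t$ against width $2\sin(t/2)$, which wins for $t<2\pi/3$, and for $t\ge2\pi/3$ I reparametrize the same path (legitimate since $[C,D]$ has diameter $\le2$, leaving ample room). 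If $C$ is non-commutative, let $\sigma_1=\left(\begin{smallmatrix}0&1\\1&0\end{smallmatrix}\right)$, $\sigma_3=\left(\begin{smallmatrix}1&0\\0&-1\end{smallmatrix}\right)$, choose an irreducible representation $\pi$ of $C$ on a Hilbert space of dimension $\ge2$ and a $2$-dimensional subspace $K$; Kadison transitivity yields self-adjoint $b,c_*\in C$ of norm $1$ with $\pi(b),\pi(c_*)$ block-diagon
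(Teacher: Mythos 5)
Your reduction of the problem is the same as the paper's: if $X\otimes C$ exists, the canonical bijection $\cC^*_1(X\otimes_{\cat{Set}}C,D)\cong[X,[C,D]]$ must be an isometry for every $D$, and both you and the paper refute this by showing that a product $u_{x_0}(c)\,u_{x_1}(c')$ in the unit ball of $X\otimes_{\cat{Set}}C$ can be separated by a pair $\phi,\psi$ more than any single $\phi u_x,\psi u_x$ permits. Where you diverge is in how the discrepancy is produced. The paper uses one uniform construction: embed $C\subseteq B(\cH)$, conjugate by unitaries $v_\alpha\to 1$ not commuting with a fixed unitary $u\in C$, and note that the commutators $w_\alpha=v_\alpha u v_\alpha^*u^*=f_\alpha(u_{x_1}u_{x_0}^*)$ are non-scalar unitaries near $1$ whose \emph{powers} $w_\alpha^n$ stay uniformly far from $1$; this power trick converts an arbitrarily small noncommutativity defect into a uniformly large one, needs no case analysis, and no comparison of constants. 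You instead aim for a fixed finite amplification from a single pair $\phi,\psi$ ($2\sin t$ against a width of $2\sin(t/2)$), which is a legitimate and more quantitative route; your commutative branch (quotient onto $\bC^2$, rotating projections in $M_2(\bC)$, the inequality $2\sin t>2\sin(t/2)$ for $t<2\pi/3$, reparametrization for larger $t$) is complete and correct.

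The gap is the non-commutative branch: the argument breaks off mid-sentence at the Kadison transitivity step, and what remains is not routine. Two estimates still need to be supplied. First, the width $\sup_{s}d_{[C,D]}(k(s),k(t-s))$ is a supremum over the \emph{entire} unit ball of $C$, not merely over the elements $b,c_*$ you construct; with $G$ supported on the two-dimensional subspace $K$ one must verify (say via Kaplansky density, reducing to $\sup_{\|T\|\le1}\|UTU^*-T\|$ for $U=e^{i(2s-t)G/2}$, whose spectrum is $\{e^{\pm i(2s-t)/2},1\}$) that this width is still only $2\sin(t/2)$ rather than degrading toward $|2s-t|$. Second, Kadison transitivity has to be invoked in the refined form producing self-adjoint norm-one elements whose images leave $K$ invariant, so that the $K$-compression of the commutator is genuinely $2i\sin t\,\sigma_2$ and gives the lower bound $2\sin t$. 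Both points look true, so the approach should be completable, but as written the harder half of the ``only if'' direction is absent; adopting the paper's power trick would let you dispense with the commutative/non-commutative dichotomy entirely.
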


\subsection*{Acknowledgements}

I am grateful for J. Rosick\'{y}'s insightful comments. 

This work is partially supported by NSF grant DMS-2001128.

\section{Preliminaries}\label{se:prel}

\cite[Remark 6.9]{dr} makes a number of observations on the category \cat{CCMet} of {\it convex} complete generalized metric spaces, where
\begin{itemize}
\item `generalized' means that distances are allowed infinite values;
\item and convexity for a metric space $(X,d)$ is as in, say, \cite[\S 2.5]{kk}: for every $x\ne y\in X$ there is some $z\ne x,y$ {\it metrically between} $x$ and $y$ in the sense that
  \begin{equation*}
    d(x,y) = d(x,z) + d(z,y). 
  \end{equation*}
\end{itemize}

\begin{convention}\label{cv:met}
  It is very natural, in the context of the present discussion, to work with possibly-infinite metrics; for that reason, we adopt the terminology of \cite[Defiition 1.1.1]{bbi}: the phrase `metric space' allows for infinite distances. If, on occasion, we encounter $\bR_{\ge 0}$-valued metrics and wish to emphasize the matter, we refer to these as {\it finite} distance functions or metrics.
\end{convention}

Keeping this possible distance infinitude in mind, we write
\begin{itemize}
\item \cat{Met} for the category of metric spaces;
\item and \cat{CMet} for that of {\it complete} metric spaces (following, say, \cite[Example 2.3 (2)]{ar-ap} and \cite[\S 6]{dr}).
\end{itemize}
In both cases the morphisms are the {\it non-expansive} maps (or the {\it contractions}) $f:(X,d_X)\to (Y,d_Y)$:
\begin{equation}\label{eq:1lip}
  d_Y(fx,fx')\le d_X(x,x'),\ \forall x,x'\in X.
\end{equation}
We also refer to contractions as {\it 1-Lipschitz} maps, per \cite[Definition 1.1]{grm}: $\lambda$-Lipschitz, for positive $\lambda$, would mean \Cref{eq:1lip} with the right-hand side scaled by $\lambda$.

Recalling the notion of $\kappa$-directedness from \Cref{def:gen}, we remind the reader of \cite[Definition 1.17]{ar}:

\begin{definition}
  Let $\kappa$ be a regular cardinal and $\cC$ a category.
  \begin{itemize}
  \item An object $x\in \cC$ is {\it $\kappa$-presentable} if $\mathrm{hom}_{\cC}(x,-)$ preserves $\kappa$-directed colimits.
  \item $\cC$ is {\it locally $\kappa$-presentable} if it is cocomplete and every object is a $\kappa$-directed colimit of $\kappa$-presentable objects.
  \item Finally, $\cC$ is {\it locally presentable} if it is locally $\kappa$-presentable for some regular cardinal $\kappa$.
  \end{itemize}
\end{definition}

As observed in \cite[Examples 2.3 (1) and (2)]{ar-ap}, \cat{Met} and \cat{CMet} are both locally $\aleph_1$-presentable.

We follow
\begin{itemize}
\item \cite[\S 6]{dr} in writing ${\bf 2}_{\delta}$ for the two-point space $\{x,x'\}$ with $d(x,x')=\delta\in \bR_{\ge 0}\cup\{\infty\}$;
\item and \cite[Example 2.3 (1)]{ar-ap} in referring to metric spaces all of whose pairwise distances are infinite as {\it discrete}.
\end{itemize}

\section{Convex metric spaces}\label{se:cvx}

In order to avoid some slightly bothersome corner cases (e.g. the issue of whether or not the two-point space ${\bf 2}_{\infty}$ is convex) we depart from \cite[\S 6]{dr} slightly in what is meant by `convex':

\begin{definition}\label{def:cvx}
  A metric space $(X,d)\in \cat{CMet}$ is {\it convex} if for every $x\ne y\in X$ with $d(x,y)<\infty$ there is some $z\in X$ distinct from both $x$ and $y$ such that
  \begin{equation*}
     d(x,y) = d(x,z) + d(z,y). 
  \end{equation*}
\end{definition}

In other words, we only require such ``intermediate'' points $z$ for $x,y\in X$ a {\it finite} distance apart. This also conflicts slightly with the notion introduced in \cite[Definition 3.6.5]{bbi}, where convexity automatically entails (by definition) the finiteness of the metric.

Per the discussion in \cite[Remark 6.9]{dr}, \cat{CCMet} is symmetric monoidal with the tensor product $(X,d_X)\otimes (Y,d_Y)$ given by the Cartesian product $X\times Y$ as a set, together with the $\ell^1$ metric:
\begin{equation*}
  d_{X\otimes Y}((x,y),(x',y')) := d_X(x,x') + d_Y(y,y'),\ \forall x,x'\in X,\ \forall y,y'\in Y.
\end{equation*}

It is a natural question (asked in passing in loc.cit.) whether this monoidal structure is closed. The existence of an internal hom object
\begin{equation*}
  [X,Y]\in \cat{CCMet}
\end{equation*}
makes sense for each pair of objects $(X,d_X)$ and $(Y,d_Y)$ in \cat{CCMet}: by definition, it would be the object defining the contravariant functor
\begin{equation*}
  \cat{CCMet}(-\otimes X,Y):\cat{CCMet}^{op}\to \cat{Set}.
\end{equation*}
Naturality in $X$ or $Y$, when these objects exist, follows from this characterization and Yoneda (e.g. \cite[Corollary 6.19]{ahs}).

\cite[\S 6]{dr} also considers the category $\cat{CMet}$ of complete metric spaces (i.e. \cat{CCMet} sans convexity). It is monoidal closed, with
\begin{equation}\label{eq:v0int}
  [X,Y]_{\cV_0} \cong (\cV_0(X,Y),d_{\sup}):
\end{equation}
see \cite[Remark 2.2 and Example 2.3 (2)]{ar-ap}. Since $\cat{CCMet}$ is monoidal and full in $\cat{CMet}$, the following simple remark tells us how the respective internal homs would relate to one another.

\begin{lemma}\label{le:corefl}
  Let $\cV\subseteq \cV_0$ be a full monoidal category of a monoidal closed category. For objects $X,Y\in \cV$, the internal hom $[X,Y]_{\cV}$ exists if and only if the object $[X,Y]_{\cV_0}\in \cV_0$ has a coreflection in $\cV$, and in that case $[X,Y]_{\cV}$ is that coreflection.
\end{lemma}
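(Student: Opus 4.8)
\subsection*{Proof proposal}

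The plan is a short chase through the universal properties involved, so I begin by fixing conventions. Reading ``full monoidal category of a monoidal closed category'' in the evident way, $\cV\subseteq\cV_0$ is a full subcategory closed under the monoidal product of $\cV_0$, so that the inclusion is (strictly) monoidal and $-\otimes X$ may be computed in $\cV$ or in $\cV_0$ interchangeably on objects of $\cV$. By definition $[X,Y]_{\cV_0}$ represents $\cV_0(-\otimes X,Y):\cV_0^{op}\to\cat{Set}$, and when it exists $[X,Y]_\cV$ represents $\cV(-\otimes X,Y):\cV^{op}\to\cat{Set}$; a coreflection of an object $Z\in\cV_0$ in $\cV$ is an object $R\in\cV$ with a morphism $\varepsilon:R\to Z$ such that post-composition with $\varepsilon$ is a bijection $\cV(W,R)\cong\cV_0(W,Z)$ for every $W\in\cV$, necessarily natural in $W$. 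The one observation that makes everything go through is that $W\in\cV$ forces $W\otimes X\in\cV$, whence fullness gives an \emph{equality} $\cV(W\otimes X,Y)=\cV_0(W\otimes X,Y)$; in particular one never has to form $W\otimes X$ for $W$ outside $\cV$.

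For the ``if'' direction, suppose $\varepsilon:R\to[X,Y]_{\cV_0}$ is a coreflection of $[X,Y]_{\cV_0}$ in $\cV$. Then for each $W\in\cV$ there are bijections
\begin{equation*}
  \cV(W,R)\ \cong\ \cV_0\bigl(W,[X,Y]_{\cV_0}\bigr)\ \cong\ \cV_0(W\otimes X,Y)\ =\ \cV(W\otimes X,Y),
\end{equation*}
the first by the coreflection property, the second by the defining property of $[X,Y]_{\cV_0}$, the third by fullness; all three are natural in $W$ (for the first, naturality is immediate from associativity of composition). Hence $R$ represents $\cV(-\otimes X,Y)$, i.e.\ $[X,Y]_\cV$ exists and equals $R$.

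For the ``only if'' direction, suppose $[X,Y]_\cV$ exists. Running the same chain in reverse,
\begin{equation*}
  \cV\bigl(W,[X,Y]_\cV\bigr)\ \cong\ \cV(W\otimes X,Y)\ =\ \cV_0(W\otimes X,Y)\ \cong\ \cV_0\bigl(W,[X,Y]_{\cV_0}\bigr)
\end{equation*}
naturally in $W\in\cV$. By the Yoneda lemma this composite natural isomorphism is post-composition with a single morphism $\varepsilon:[X,Y]_\cV\to[X,Y]_{\cV_0}$ (namely the image of $\id$ in the instance $W=[X,Y]_\cV$), so $\varepsilon$ exhibits $[X,Y]_\cV$ as a coreflection of $[X,Y]_{\cV_0}$ in $\cV$. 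Since coreflections, like representing objects, are unique up to canonical isomorphism, this simultaneously yields the final clause of the statement.

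There is no genuine obstacle here; the only points needing a little care are (i) pinning down the monoidal-subcategory hypothesis precisely enough to license the interchange of $-\otimes X$ between $\cV$ and $\cV_0$, and (ii) the bookkeeping that the composite of the three bijections is natural in $W$, which is exactly what the Yoneda step in the ``only if'' direction consumes.
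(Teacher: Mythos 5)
Your argument is correct and is essentially the paper's own proof: both identify $[X,Y]_{\cV}$ and the coreflection of $[X,Y]_{\cV_0}$ as representing objects for the same functor $\cV(-\otimes X,Y)\cong\cV_0(-\otimes X,Y)\cong\cV_0(-,[X,Y]_{\cV_0})$ restricted to $\cV$. You merely spell out the naturality and Yoneda bookkeeping that the paper leaves implicit.
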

\begin{proof}
  Indeed, $[X,Y]_{\cV}$ would have precisely the same universal property as the $\cV$-coreflection of $[X,Y]_{\cV_0}$: representing the contravariant $\cat{Set}$-valued functor
  \begin{equation*}
    \cV(-\otimes X, Y)\cong \cV_0(-\otimes X, Y)\cong \cV_0(-,[X,Y]_{\cV_0})
  \end{equation*}
  on $\cV$.
\end{proof}
As in the above equation, we occasionally decorate the internal hom by the category where it is intended to live: $[X,Y]$ is also $[X,Y]_{\cV}$.

We will need some more metric-geometry vocabulary, for which we refer to \cite[Chapter 1]{grm} and \cite[Chapter 2]{bbi}. A small amount of care is needed in adapting some statements from the former source, where `metric space' has the more conventional meaning allowing only for finite metrics \cite[Introduction]{grm}. 

First, as we recall shortly, the distance of a complete convex metric space can, in a sense, be recovered from non-expansive paths in the space. The relevant notions follow (\cite[Definitions 1.2 and 1.7]{grm} or \cite[Definitions 2.1.6, 2.1.10 and 2.3.1]{bbi}).

\begin{definition}\label{def:lspace}
  Let $(X,d)$ be a metric space.
  \begin{itemize}
  \item The {\it length} $\ell(f)$ of a continuous curve $f:[a,b]\to X$ is
    \begin{equation*}
      \ell(f):=\sup \sum_{i=0}^n d(f(t_i),f(t_{i+1})),
    \end{equation*}
    where the supremum is taken over all selections of intermediate points
    \begin{equation*}
      a=t_0\le t_1\le \cdots\le t_n = t_{n+1}=b.
    \end{equation*}
  \item A curve $f:[a,b]\to X$ is {\it rectifiable} if $\ell(f)<\infty$.
  \item The {\it path} or {\it length} or {\it intrinsic metric} $d_{\ell}$ attached to $d$ is
    \begin{equation*}
      d_{\ell}(x,y):=\inf\ell(f),\ f:[a,b]\to X,\ f(a)=x\text{ and }f(b)=y.
    \end{equation*}
  \item $(X,d)$ is a {\it path} (or {\it length}, or {\it intrinsic}) {\it metric space} if $d=d_{\ell}$.
  \item A length metric space $(X,d)$ is {\it strict} (and its metric is {\it strictly intrinsic}) if any two points $x,x'$ with $d(x,x')<\infty$ can be connected by a path of length $d(x,x')$.
  \end{itemize}
\end{definition}

\begin{remarks}\label{res:dl}
  \begin{enumerate}[(1)]
  \item\label{item:1} It is immediate from the definition of $d_{\ell}$ that $d\le d_{\ell}$, but in general the inequality is strict. Indeed, as observed in \cite[Example 1.4 (a)]{grm}, even the topologies induced by the two metrics are generally distinct: path components in the $d$-topology are {\it clopen} (both closed and open) in the $d_{\ell}$-topology.

    This same class of examples also shows that even when $d$ takes only finite values, $d_{\ell}$ might not: $d_{\ell}(x,y)=\infty$ whenever $x$ and $y$ are in different path components.
  \item\label{item:2} \cite[Remark following Proposition 1.6]{grm} notes that for any $(X,d)$, the resulting metric space $(X,d_{\ell})$ is in fact a path metric space because the construction $d\mapsto d_{\ell}$ is idempotent:
    \begin{equation*}
      (d_{\ell})_{\ell}=d_{\ell}
    \end{equation*}
    We take this for granted implicitly below.
  \item\label{item:3} It is not difficult to see that if $(X,d)$ is complete then so is $(X,d_{\ell})$: see the proof of \Cref{pr:whencorefl}.
  \item\label{item:10} Our notion of `strictly intrinsic' is somewhat weaker than that of \cite[Definition 2.1.10]{bbi}; the latter automatically implies that all distances are finite.

    Indeed, loc.cit. asks that {\it any} two points $x$ and $x'$ be the endpoints of a continuous map from an interval (of length precisely $d(x,x')$, but this is beside the point here). If $d(x,x')=\infty$ then $x$ and $x'$ lie in distinct clopen components in the topology induced by $d$, so this cannot happen.

    Some of the results in \cite{bbi} seem to ignore the issue of infinite distances, so that some care is required in applying them to generalized metric spaces: for \cite[Theorem 2.4.16, part 1.]{bbi} to hold, for instance,
    \begin{itemize}
    \item one must assume the metric is finite;
    \item or extend the notion of `strictly intrinsic' to possibly $\infty$-valued metrics, as in the present definition;
    \item in which case, for \cite[Lemma 2.4.8]{bbi} to hold, one would have to also modify \cite[Definition 2.4.7]{bbi} of {\it midpoints} by requiring the defining constraint only for finite-distance pairs (as in \Cref{re:cvxchar}).
    \end{itemize}
  \end{enumerate}  
\end{remarks}

As we are working with categories of metric spaces where morphisms are {\it contractive}, it is perhaps worth noting the following alternative description of the $d\mapsto d_{\ell}$ construction.

\begin{lemma}\label{le:altdl}
  For a metric space $(X,d_X)$ and points $x,x'\in X$ the intrinsic metric $d_{X,\ell}$ can be recovered as
  \begin{equation}\label{eq:1lipl}
    d_{X,\ell}(x,x')
    =
    \inf\{\ell\in\bR_{\ge 0}\ |\ \exists \text{ 1-Lipschitz }\varphi:[0,\ell]\to (X,d_X),\ \varphi(0)=x,\ \varphi(\ell)=x'\}.
  \end{equation}
\end{lemma}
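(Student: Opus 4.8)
The plan is to show the two sides of \Cref{eq:1lipl} bound each other. Write $R(x,x')$ for the right-hand side (the infimum over lengths $\ell$ admitting a $1$-Lipschitz map $[0,\ell]\to(X,d_X)$ joining $x$ to $x'$). The inequality $d_{X,\ell}(x,x')\le R(x,x')$ is the easy direction: any $1$-Lipschitz $\varphi:[0,\ell]\to X$ is in particular a continuous curve, and for any partition $0=t_0\le\cdots\le t_{n+1}=\ell$ one has $\sum_i d_X(\varphi(t_i),\varphi(t_{i+1}))\le \sum_i |t_{i+1}-t_i| = \ell$, so $\ell(\varphi)\le \ell$; hence $d_{X,\ell}(x,x') = \inf_f \ell(f)\le \inf_\varphi \ell(\varphi) \le R(x,x')$ once we note the maps $\varphi$ are among the curves $f$ competing in the definition of $d_\ell$.

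For the reverse inequality $R(x,x')\le d_{X,\ell}(x,x')$, I would take an arbitrary continuous curve $f:[a,b]\to X$ with $f(a)=x$, $f(b)=x'$ and $\ell:=\ell(f)<\infty$ (if no rectifiable curve joins $x$ to $x'$ then $d_{X,\ell}(x,x')=\infty$ and there is nothing to prove), and reparametrize it by arc length. Concretely, define $s:[a,b]\to[0,\ell]$ by $s(t):=\ell(f|_{[a,t]})$, the length of the initial segment. This $s$ is nondecreasing, continuous (continuity of the partial-length function is standard for rectifiable curves — see \cite[\S 2.3]{bbi} or \cite[Chapter 1]{grm}), and surjective onto $[0,\ell]$ with $s(a)=0$, $s(b)=\ell$. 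The key point is that $f$ factors through $s$: if $s(t_1)=s(t_2)$ with $t_1<t_2$ then $\ell(f|_{[t_1,t_2]})=0$, forcing $d_X(f(t_1),f(t_2))=0$ and hence $f(t_1)=f(t_2)$. Therefore there is a well-defined map $\varphi:[0,\ell]\to X$ with $\varphi\circ s = f$, in particular $\varphi(0)=x$ and $\varphi(\ell)=x'$. Finally $\varphi$ is $1$-Lipschitz: for $u_1\le u_2$ in $[0,\ell]$ pick $t_i$ with $s(t_i)=u_i$; then
\[
  d_X(\varphi(u_1),\varphi(u_2)) = d_X(f(t_1),f(t_2)) \le \ell(f|_{[t_1,t_2]}) = s(t_2)-s(t_1) = u_2-u_1.
\]
Thus $\ell\ge R(x,x')$, and taking the infimum over all such $f$ gives $d_{X,\ell}(x,x')\ge R(x,x')$.

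The main obstacle is the arc-length reparametrization step: one must verify carefully that the partial-length function $t\mapsto \ell(f|_{[a,t]})$ is continuous and that the additivity $\ell(f|_{[a,t_2]}) = \ell(f|_{[a,t_1]}) + \ell(f|_{[t_1,t_2]})$ holds, which are the standard facts underlying the existence of a unit-speed parametrization of a rectifiable curve. These are available in \cite[\S 2.3]{bbi}, but as \Cref{res:dl}\cref{item:10} warns, a little care is warranted since we allow infinite distances; the argument above sidesteps this because we only ever reparametrize a single \emph{rectifiable} curve $f$, on which $d_X$ restricts to a finite metric along the image, so the classical theory applies verbatim. A harmless remark to include: one may always assume $\ell>0$, since if $x=x'$ the constant map works with $\ell=0$ on both sides, and if $x\ne x'$ then $d_{X,\ell}(x,x')\ge d_X(x,x')>0$ anyway.
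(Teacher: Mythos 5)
Your proposal is correct and follows essentially the same route as the paper: reduce to the arc-length reparametrization of a rectifiable curve, which yields a $1$-Lipschitz map on $[0,\ell(f)]$ with the same endpoints. The only difference is that you prove the reparametrization facts (continuity and additivity of the partial-length function, factorization of $f$ through it) by hand, whereas the paper simply invokes \cite[Proposition 2.5.9]{bbi}; you also spell out the easy inequality $d_{X,\ell}\le R$, which the paper leaves implicit.
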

\begin{proof}
  Consider a rectifiable curve
  \begin{equation*}
    f:[a,b]\to X,\ a\mapsto x,\ b\mapsto x'.
  \end{equation*}
  By \cite[Proposition 2.5.9]{bbi} it decomposes as $f=\varphi\circ \alpha$ for non-decreasing $\alpha:[a,b]\to [0,\ell(f)]$ and an {\it arc-length-parametrized} (\cite[Definition 2.5.7 and discussion following Remark 2.5.8]{bbi})
  \begin{equation*}
    \varphi:[0,\ell(f)]\to X,\ 0\mapsto x,\ \ell(f)\mapsto x'.
  \end{equation*}
  We now have $\ell(f)=\ell(\varphi)$ (i.e. composition with a non-decreasing map makes no difference to the length), and $\varphi$ is 1-Lipschitz.
\end{proof}

And an immediate consequence that we will take for granted repeatedly in the sequel:

\begin{corollary}\label{cor:strictiff}
  A length metric space is strict in the sense of \Cref{def:lspace} if and only if all finite infima \Cref{eq:1lipl} are achieved (i.e. are actual minima).  \qedhere
\end{corollary}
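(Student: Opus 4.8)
The plan is to read both implications directly off \Cref{le:altdl}: in a length space $d = d_\ell$, so for a pair $x,x'$ with $d(x,x')<\infty$ the value $d(x,x')$ is, by \Cref{le:altdl}, exactly the infimum on the right-hand side of \Cref{eq:1lipl}. The only content is to translate between ``a path of length $d(x,x')$'' in the sense of \Cref{def:lspace} and ``a competitor in \Cref{eq:1lipl} that attains the infimum'', i.e. a $1$-Lipschitz map $[0,d(x,x')]\to X$ with the prescribed endpoints.

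For the forward implication, assume $(X,d)$ is strict and fix $x,x'$ with $L:=d(x,x')<\infty$. By strictness there is a continuous curve $f:[a,b]\to X$ from $x$ to $x'$ with $\ell(f)=L$. Exactly as in the proof of \Cref{le:altdl}, \cite[Proposition 2.5.9]{bbi} lets us factor $f=\varphi\circ\alpha$ with $\varphi:[0,\ell(f)]=[0,L]\to X$ arc-length-parametrized --- hence $1$-Lipschitz --- and $\varphi(0)=x$, $\varphi(L)=x'$. So $L$ lies in the set whose infimum appears in \Cref{eq:1lipl}; since every element $\ell$ of that set satisfies $\ell\ge d(\varphi(0),\varphi(\ell))= d(x,x')=L$, the infimum $d_{X,\ell}(x,x')=d(x,x')$ is attained.

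For the converse, assume every finite instance of \Cref{eq:1lipl} is a minimum and fix $x,x'$ with $L:=d(x,x')=d_{X,\ell}(x,x')<\infty$. Choose a $1$-Lipschitz $\varphi:[0,L]\to X$ with $\varphi(0)=x$, $\varphi(L)=x'$ realizing the infimum. Then $\varphi$ is continuous, and summing $d(\varphi(t_i),\varphi(t_{i+1}))\le t_{i+1}-t_i$ over any partition of $[0,L]$ yields $\ell(\varphi)\le L$; combined with $\ell(\varphi)\ge d(\varphi(0),\varphi(L))=L$ this gives $\ell(\varphi)=L=d(x,x')$. Thus $\varphi$ is a path from $x$ to $x'$ of length $d(x,x')$, so $(X,d)$ is strict.

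No step poses a real difficulty; the one point requiring care is that \Cref{def:lspace} permits the domain $[a,b]$ of a path to be arbitrary, so in the forward direction one genuinely needs the arc-length reparametrization of \cite{bbi} to move onto the specific interval $[0,d(x,x')]$ called for by \Cref{eq:1lipl}, whereas in the converse direction the path produced already has the correct domain and needs no re-indexing.
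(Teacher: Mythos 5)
Your argument is correct and is precisely the reading the paper intends: the corollary is stated as an immediate consequence of \Cref{le:altdl} with no separate proof, and you have simply made explicit the two translations (arc-length reparametrization via \cite[Proposition 2.5.9]{bbi} in the forward direction, the length bound $\ell(\varphi)\le L\le \ell(\varphi)$ for a $1$-Lipschitz $\varphi:[0,L]\to X$ in the converse) that the paper leaves to the reader. Nothing is missing.
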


It is a classical result of Menger's \cite{menger} that complete convex metric spaces are length metric spaces. Much more is true though; before stating the full result, recall (\cite[\S 2.5]{kk}, \cite[Definition 14.2]{blm-met} and \cite[Definition 1.9]{grm}):

\begin{definition}\label{def:msegm}
  Let $(X,d)$ be a metric space.

  A {\it metric segment} or {\it minimizing geodesic} in $X$ is an isometry $f:[a,b]\to X$ from a finite interval (with its usual distance function) to $X$.

  We refer to $f(a)$ and $f(b)$ as the {\it endpoints} of the segment or say that the segment (geodesic) {\it connects} them.
\end{definition}

Menger's theorem, referred to above, says that not only are complete convex metric spaces path metric spaces, but in fact, for any two points $x,y$ (with $d(x,y)<\infty$ in our present context of {\it generalized} metric spaces), there is a minimizing geodesic connecting $x$ and $y$; see for instance \cite[unnumbered Theorem preceding Lemma 2.1]{gk-met} or \cite[Theorem 14.1]{blm-met} for proofs (the result also appears as \cite[Theorem 2.16]{kk}).

\begin{remark}\label{re:cvxchar}
  We have now come full-circle back to intrinsic metrics: for a complete metric space $(X,d)$, the following are equivalent:
  \begin{enumerate}[(a)]
  \item\label{item:11} convexity;
  \item\label{item:12} $(X,d)$ is strictly intrinsic in the sense of \Cref{def:lspace};
  \item\label{item:13} any two $x,x'\in X$ with $d(x,x')<\infty$ have a {\it midpoint}: a point $y$ with
    \begin{equation*}
      d(x,y) = d(x',y) = \frac{d(x,x')}2.
    \end{equation*}
  \end{enumerate}
  Indeed, \Cref{item:11} implies \Cref{item:12} by Menger, while \Cref{item:12} $\Rightarrow$ \Cref{item:13} and \Cref{item:13} $\Rightarrow$ \Cref{item:11} are clear.
\end{remark}

\Cref{le:corefl} suggests that we should study coreflections of $\cat{CMet}$ in $\cat{CCMet}$. The following result describes the circumstances when these exist.

\begin{proposition}\label{pr:whencorefl}
  Consider an object $(X,d_X)\in \cV_0:=\cat{CMet}$.
  \begin{enumerate}[(1)]
  \item\label{item:6} The intrinsic metric $d:=d_{X,\ell}$ of \Cref{def:lspace} is a complete generalized metric on $X$.
  \item\label{item:7} $X$ has a coreflection in $\cV:=\cat{CCMet}$ precisely when $(X,d)$ is strict in the sense of \Cref{def:lspace}, in which case $(X,d)\in \cV$ is the coreflection.
  \end{enumerate}
\end{proposition}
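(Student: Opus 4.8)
The plan is to prove the two parts in order, with part \Cref{item:6} serving as a warm-up that isolates the completeness argument, and part \Cref{item:7} then leveraging \Cref{le:corefl} together with \Cref{re:cvxchar}.

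\textbf{Part \Cref{item:6}.} First I would check that $d_{X,\ell}$ is a generalized metric: symmetry and the triangle inequality are immediate from the curve-concatenation description, and $d_{X,\ell}(x,x')=0$ forces $d(x,x')=0$ since $d\le d_{X,\ell}$ (\Cref{res:dl}\Cref{item:1}), so $x=x'$; infinite values are allowed and cause no trouble. The substance is completeness. Let $(x_n)$ be $d_{X,\ell}$-Cauchy. Since $d\le d_{X,\ell}$, it is also $d$-Cauchy, hence $d$-converges to some $x\in X$ by completeness of $(X,d)$; I must upgrade this to $d_{X,\ell}$-convergence. The idea is the standard telescoping trick: pass to a subsequence $(x_{n_k})$ with $d_{X,\ell}(x_{n_k},x_{n_{k+1}})<2^{-k}$, so that for each $k$ there is a $1$-Lipschitz path $\varphi_k:[0,\ell_k]\to(X,d)$ of length $\ell_k<2^{-k}$ from $x_{n_k}$ to $x_{n_{k+1}}$ (using \Cref{le:altdl}). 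Concatenating $\varphi_k,\varphi_{k+1},\dots$ gives, for each $k$, a $1$-Lipschitz path from $x_{n_k}$ of total length $<2^{-k+1}$, and because $(X,d)$ is complete this concatenation extends continuously to a path ending at the $d$-limit of the right endpoints; but the $d$-limit of $(x_{n_k})$ is $x$. Hence $d_{X,\ell}(x_{n_k},x)\le 2^{-k+1}\to 0$, so $x_{n_k}\to x$ in $d_{X,\ell}$, and a Cauchy sequence with a convergent subsequence converges. This is exactly the argument \Cref{res:dl}\Cref{item:3} defers to here, so I would write it out carefully; the one point requiring attention is that the infinite concatenation of the $\varphi_k$ is genuinely a continuous rectifiable curve into $(X,d)$, which is where completeness of $d$ is used.

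\textbf{Part \Cref{item:7}.} By \Cref{le:corefl} applied to $\cV=\cat{CCMet}\subseteq\cV_0=\cat{CMet}$, the internal hom considerations aside, what is relevant is that a coreflection of $(X,d_X)$ into $\cat{CCMet}$ is a terminal object in the category of pairs $(Z\in\cat{CCMet},\ Z\to X)$. I claim the candidate is always $(X,d_{X,\ell})$ equipped with the identity map, which is $1$-Lipschitz since $d\le d_{X,\ell}$. For the ``if'' direction: suppose $(X,d_{X,\ell})$ is strict; then by \Cref{re:cvxchar} (equivalence of \Cref{item:11} and \Cref{item:12}, using Part \Cref{item:6} for completeness) it lies in $\cat{CCMet}$, and I must verify the universal property. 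Given any $(Z,d_Z)\in\cat{CCMet}$ and a $1$-Lipschitz $g:(Z,d_Z)\to(X,d_X)$, I need $g$ to factor (necessarily uniquely, as it is the identity on underlying sets) through $(X,d_{X,\ell})$, i.e. $g:(Z,d_Z)\to(X,d_{X,\ell})$ is still $1$-Lipschitz. This is the key computation: for $z,z'\in Z$ with $d_Z(z,z')<\infty$, convexity of $Z$ (via Menger, \Cref{re:cvxchar}) gives a minimizing geodesic $[0,d_Z(z,z')]\to Z$ from $z$ to $z'$; composing with the $1$-Lipschitz map $g$ yields a $1$-Lipschitz curve in $(X,d_X)$ of length $\le d_Z(z,z')$ connecting $g(z)$ and $g(z')$, whence $d_{X,\ell}(g(z),g(z'))\le d_Z(z,z')$ by \Cref{le:altdl}. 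The infinite-distance case is vacuous. So $(X,d_{X,\ell})$ is the coreflection.

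For the ``only if'' direction: suppose $(X,d_X)$ has a coreflection $(Y,d_Y)\in\cat{CCMet}$ with structure map $p:(Y,d_Y)\to(X,d_X)$. I want to conclude $(X,d_{X,\ell})$ is strict. The map $\mathrm{id}:(X,d_{X,\ell})\to(X,d_X)$ —if $(X,d_{X,\ell})$ happened to be convex— would factor through $p$ by the universal property, giving a retraction-type comparison; but to avoid circularity I argue directly. Since $(Y,d_Y)\in\cat{CCMet}$ and $p$ is $1$-Lipschitz, the above geodesic-pushforward computation shows that $p:(Y,d_Y)\to(X,d_{X,\ell})$ is $1$-Lipschitz, i.e. $p$ factors through $(X,d_{X,\ell})\to(X,d_X)$; moreover $d_{X,\ell}\le p$-pushed-forward-metric means on the image... cleaner: one checks $(X,d_{X,\ell})$, with the identity to $(X,d_X)$, receives a canonical $1$-Lipschitz map from $(Y,d_Y)$ over $(X,d_X)$, and conversely $(Y,d_Y)$ being terminal forces a $1$-Lipschitz map $(X,d_{X,\ell})\to(Y,d_Y)$ over $(X,d_X)$ — \emph{provided} $(X,d_{X,\ell})$ lies in $\cat{CCMet}$, which is precisely strictness. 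To break the apparent circularity I instead observe: the coreflection $Y$, being convex, is strictly intrinsic, so $p$ sends geodesics of $Y$ to length-realizing $1$-Lipschitz curves in $X$; since $p$ is also (by terminality) a \emph{bijection on points} — here one must check $p$ is bijective, which follows because $(X,d_{X,\ell})$-with-identity is an object over $X$ whose structure map is a bijection, forcing the comparison map $X\to Y$ to be a section of $p$, and terminality pins down $Y$ up to the point where $p$ must be injective and the section surjective — we transport the geodesics back to see every finite-distance pair in $(X,d_{X,\ell})$ is connected by a length-realizing path. I expect this ``only if'' direction, specifically pinning down that the coreflection's underlying set map $p$ is a bijection and identifying $d_Y$ with $d_{X,\ell}$, to be the main obstacle; the honest way is to first prove the clean statement ``$(X,d_{X,\ell})$ is the \emph{only} possible coreflection, and it is one iff it is convex iff it is strict,'' deriving bijectivity of any coreflection's structure map from the fact that $\mathrm{id}:(X,d_{X,\ell})\to(X,d_X)$ is always an object of the comma category with bijective structure map, and then invoking \Cref{re:cvxchar} to convert convexity to strictness. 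Everything else is routine given Part \Cref{item:6}, \Cref{le:altdl}, and \Cref{re:cvxchar}.
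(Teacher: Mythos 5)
Your part \Cref{item:6} and the ($\Leftarrow$) half of part \Cref{item:7} are correct and essentially identical to the paper's argument: the telescoping concatenation of $1$-Lipschitz curves for completeness, and Menger plus pushforward of minimizing geodesics for the universal property, with the realized infimum forcing the realizing curve to be a metric segment and hence convexity.

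The ($\Rightarrow$) direction has a genuine gap, which you flag yourself but do not resolve. You propose to deduce that the structure map $p:(Y,d_Y)\to(X,d_X)$ of a hypothetical coreflection is a bijection ``from the fact that $\id:(X,d_{X,\ell})\to(X,d_X)$ is always an object of the comma category with bijective structure map'' --- but $(X,d_{X,\ell})$ is an object of the relevant comma category (objects of $\cat{CCMet}$ over $X$) only if it is convex, which is exactly what the ($\Rightarrow$) direction is trying to establish; as stated the argument is circular. The missing idea is to test the universal property against objects of $\cat{CCMet}$ that are available unconditionally: the one-point space ${\bf 1}$ (vacuously convex) shows that $p$ induces a bijection on underlying points, so the coreflection must be $\id:(X,d')\to(X,d_X)$ for some $d'\ge d_X$; and the intervals $[0,\ell]$ (convex) show that every $1$-Lipschitz curve $[0,\ell]\to(X,d_X)$ factors through $(X,d')$, whence $d'\le d_{X,\ell}$. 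Combined with $d_{X,\ell}\le d'$, which is what your pushforward computation gives, this pins down $d'=d_{X,\ell}$, and Menger applied to the convex complete space $(X,d')$ then yields strictness. Without the singleton and interval tests, neither the bijectivity of $p$ nor the inequality $d'\le d_{X,\ell}$ is established, and the latter is absent from your sketch in any form (you only obtain the inequality $d_{X,\ell}\le d_Y$).
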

\begin{proof}
  We tackle the claims in turn.

  {\bf \Cref{item:6}} The triangle inequality follows from the fact that 1-Lipschitz maps
  \begin{equation*}
    [0,\ell]\to X,\quad [0,\ell']\to X
  \end{equation*}
  ending and respectively starting at the same point splice together to a 1-Lipschitz map defined on $[0,\ell+\ell']$. The non-degeneracy condition
  \begin{equation*}
    d(x,y)=0\Rightarrow x=y
  \end{equation*}
  being obvious (for instance because $d$ dominates $d_X$), we do indeed have a generalized metric. As for completeness: note first that a $d$-Cauchy sequence $(x_n)_n$ is certainly $d_X$-Cauchy, because $d\ge d_X$. Such a sequence will thus converge to some $x\in X$ in the original $d_X$ metric. We can now find positive integers
  \begin{equation*}
    n_0<n_1<\cdots
  \end{equation*}
  such that $d(x_{n_{k-1}},x_{n_{k}})<\frac 1{4^k}$ for $k\ge 1$. We thus have 1-Lipschitz curves
  \begin{equation*}
    \left[\frac 14+\cdots+\frac 1{4^{k-1}},\quad \frac 14+\cdots+\frac 1{4^k}\right]\to X
  \end{equation*}
  connecting $x_{n_{k-1}}$ and $x_{n_{k}}$ respectively. For fixed $k$ those with indices $k$ and higher splice together to a 1-Lipschitz curve
  \begin{equation*}
    \left[\frac 14+\cdots+\frac 1{4^{k-1}},\quad \frac 13\right]\to X
  \end{equation*}
  connecting $x_{n_{k-1}}$ and $x$, whence the conclusion that $x_{n_k}\to x$ in the $d$-topology.
  
  {\bf \Cref{item:7}} If ${\bf 1}\in \cV$ is the one-point space (and hence the monoidal unit of both $\cV_0$ and $\cV$) then the functors $\cV({\bf 1},-)$ and $\cV_0({\bf 1},-)$ are both forgetful to $\cat{Set}$. It follows from this that a coreflection of $(X,d_X)\in \cV_0$ in $\cV$ must be of the form
  \begin{equation}\label{eq:crflid}
    \id:(X,d')\to (X,d_X)
  \end{equation}
  for some alternative distance $d'\ge d_X$, to be determined (when it exists). On to the two implications that constitute claim \Cref{item:7}.

  {\bf ($\Leftarrow$)} We already know from part \Cref{item:6} that \Cref{eq:1lipl} is a complete generalized metric. Note furthermore that any contraction $f:(Y,d_Y)\to (X,d_X)$ with $Y$ convex factors through a contraction to $(X,d)$ with $d$ as in \Cref{eq:1lipl}: any two points $y_i\in Y$, $i=0,1$ are (by \cite[Theorem 2.16]{kk}) the endpoints of a metric segment
  \begin{equation*}
    \gamma:[0,d_Y(y_0,y_1)]\to Y,
  \end{equation*}
  so we have a 1-Lipschitz curve
  \begin{equation*}
    \varphi:=f\circ\gamma:[0,d_Y(y_0,y_1)]\to X
  \end{equation*}
  with $\varphi(0)=x_0:=f(y_0)$ and $\varphi(d_Y(y_0,y_1))=x_1:=f(y_1)$. It follows, then, that
  \begin{equation}\label{eq:dyged}
    d_Y(y_0,y_1)\ge d(x_0,x_1)
  \end{equation}
  for the distance $d$ of \Cref{eq:1lipl}. This shows that $\id:(X,d)\to (X,d_X)$ will indeed be a coreflection, provided $(X,d)$ is convex. 

  If $\ell:=d(x,x')<\infty$ then a 1-Lipschitz map
  \begin{equation*}
    [0,\ell]\to X,\ 0\mapsto x,\ \ell\mapsto x'
  \end{equation*}
  (assumed to exist by the infimum-realization hypothesis) must in fact be a metric segment, hence convexity.

  {\bf ($\Rightarrow$)} If a coreflection exists, we have already noted it must be of the form \Cref{eq:crflid} for some metric $d'$. It remains to argue that the infima \Cref{eq:1lipl} are achieved when finite and that \Cref{eq:1lipl} is the distance function on the coreflection.

  To that end, let $x,x'\in X$ with $d(x,x')<\infty$ meaning simply that there are 1-Lipschitz curves connecting $x$ and $y$. Any such curve with domain $[0,\ell]$ will factor through \Cref{eq:crflid} and hence $\ell\ge d'(x,x')$. But then the infimum \Cref{eq:1lipl} also dominates $d'(x,x')$; the opposite inequality was noted above, in the proof of ($\Leftarrow$) (see \Cref{eq:dyged}), so that $d'=d$.

  Finally, the fact that the infimum is in fact achieved then follows from Menger's \cite[Theorem 2.16]{kk} again: every $x,x'\in X$ with $d(x,x')<\infty$ are the endpoints of a metric segment of length $d(x,x')$.
\end{proof}

We now have the following description of (potential) internal homs in $\cat{CCMet}$.

\begin{corollary}\label{cor:ihomchar}
  Let $(X,d_X)$ and $(Y,d_Y)$ be two objects in $\cV:=\cat{CCMet}$.
  \begin{enumerate}[(1)]
  \item\label{item:4} If $[X,Y]\in\cV$ exists, then it must be $\cV(X,Y)$ equipped with the following metric:
    \begin{equation}\label{eq:minach}
      d(f,g):=\min\{\ell\ |\ \exists \text{ 1-Lipschitz }\varphi:[0,\ell]\to (\cV(X,Y),d_{\sup}),\ \varphi(0)=f,\ \varphi(\ell)=g\},
    \end{equation}
    where
    \begin{equation*}
      d_{\sup}(f,g):=\sup_{x\in X}d_Y(f(x),g(x)). 
    \end{equation*}
    In particular, the existence of the internal hom requires that the minimum be achieved whenever the infimum is finite.
  \item\label{item:5} Conversely, if the minima \Cref{eq:minach} are achieved for arbitrary $f,g\in \cV(X,Y)$ for which the respective infimum is finite, then \Cref{eq:minach} defines a generalized metric on $\cV(X,Y)$ making it into the internal hom.
  \end{enumerate}
\end{corollary}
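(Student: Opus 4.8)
The plan is to obtain the corollary as a direct specialization of \Cref{pr:whencorefl} to one distinguished object of $\cV_0:=\cat{CMet}$: the $\cat{CMet}$-internal hom $[X,Y]_{\cV_0}$. Since $\cV:=\cat{CCMet}$ is full in $\cV_0$, the morphism sets coincide, $\cV(X,Y)=\cV_0(X,Y)$, and by \eqref{eq:v0int} the complete metric space $[X,Y]_{\cV_0}$ is exactly $(\cV(X,Y),d_{\sup})$. \Cref{le:corefl} then tells us that $[X,Y]_{\cV}$ exists if and only if $(\cV(X,Y),d_{\sup})\in\cV_0$ admits a coreflection in $\cV$, and in that case it \emph{is} that coreflection; so the whole statement is precisely what \Cref{pr:whencorefl} asserts about this one object.

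Next I would unwind that proposition. Its second part says the $\cat{CCMet}$-coreflection of $(\cV(X,Y),d_{\sup})$ exists exactly when the space $(\cV(X,Y),d_{\sup,\ell})$ --- the intrinsic metrization of $(\cV(X,Y),d_{\sup})$ in the sense of \Cref{def:lspace} --- is strict, in which case this metrization is the coreflection. By \Cref{le:altdl}, $d_{\sup,\ell}(f,g)$ is the infimum of those $\ell$ admitting a $1$-Lipschitz $\varphi:[0,\ell]\to(\cV(X,Y),d_{\sup})$ from $f$ to $g$, i.e.\ the infimum form of \eqref{eq:minach}; and since $d_{\sup,\ell}$ is automatically a length metric, \Cref{cor:strictiff} identifies strictness of this space with the attainment of every finite such infimum, i.e.\ with the minimum in \eqref{eq:minach} being realized. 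Chaining the equivalences: $[X,Y]_{\cV}$ exists iff the minima \eqref{eq:minach} are achieved whenever the corresponding infima are finite, and then the internal hom is $(\cV(X,Y),d_{\sup,\ell})$, whose metric is under that hypothesis literally \eqref{eq:minach}. This yields \Cref{item:4} (necessity of the description, formula included) and \Cref{item:5} (its sufficiency) simultaneously.

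The one point warranting genuine care --- the nearest thing to an obstacle in an otherwise bookkeeping argument --- is reconciling the two senses of ``$1$-Lipschitz curve'' in play: \Cref{cor:strictiff}, read off \Cref{def:lspace}, concerns curves valued in the length space $(\cV(X,Y),d_{\sup,\ell})$, whereas \eqref{eq:minach} and \Cref{le:altdl} use curves valued in $(\cV(X,Y),d_{\sup})$. These two classes agree: a curve $1$-Lipschitz for $d_{\sup,\ell}$ is trivially $1$-Lipschitz for $d_{\sup}$ since $d_{\sup}\le d_{\sup,\ell}$; conversely, if $\varphi:[0,\ell]\to\cV(X,Y)$ is $1$-Lipschitz for $d_{\sup}$ then for $s\le t$ the restriction $\varphi|_{[s,t]}$ has length $\le t-s$ with respect to $d_{\sup}$, whence $d_{\sup,\ell}(\varphi(s),\varphi(t))\le t-s$ and $\varphi$ is $1$-Lipschitz for $d_{\sup,\ell}$ too. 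This is just the idempotence of $d\mapsto d_{\ell}$ recorded in \Cref{res:dl}. Once this identification is in place, nothing further needs checking.
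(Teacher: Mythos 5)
Your proposal is correct and follows exactly the paper's route: the paper's proof is a one-line appeal to \Cref{le:corefl}, \Cref{pr:whencorefl} and \Cref{eq:v0int}, which is precisely the chain of reductions you spell out (with \Cref{le:altdl} and \Cref{cor:strictiff} doing the unwinding). Your extra care about the two senses of $1$-Lipschitz curve, resolved via the idempotence of $d\mapsto d_{\ell}$, is a legitimate point the paper leaves implicit and is handled correctly.
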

\begin{proof}
  This is an immediate application of \Cref{le:corefl}, \Cref{pr:whencorefl} and the description \Cref{eq:v0int} of internal homs in $\cat{CMet}$.
\end{proof}

Some preparatory remarks follow, aimed at giving sufficient conditions for the existence of internal homs in \cat{CCMet}.

\begin{proposition}\label{pr:dl1cpct}
  Let $(X,d_X)\in \cV_0:=\cat{CMet}$ be a complete generalized metric space. If the finite-radius closed balls of $X$ are compact, then 
  \begin{enumerate}[(a)]
  \item\label{item:8} the same holds for the internal hom $[Y,X]_{\cV_0}$ for any $(Y,d_Y)\in \cat{CMet}$;  
  \item\label{item:9} and $(X,d_X)$ satisfies the condition in \Cref{pr:whencorefl} \Cref{item:7}, and thus it has a coreflection in $\cV:=\cat{CCMet}$.
  \end{enumerate}
\end{proposition}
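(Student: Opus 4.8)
The plan is to read off both parts from the Arzel\`a--Ascoli theorem. For part~\Cref{item:8}: by \Cref{eq:v0int} the internal hom $[Y,X]_{\cV_0}$ is the set $\cV_0(Y,X)$ of non-expansive maps $Y\to X$ with the supremum metric $d_{\sup}$, and it is complete, being an internal hom of the monoidal closed category \cat{CMet}. I would fix $f_0\in\cV_0(Y,X)$ and a finite radius $r$ and consider the closed ball $\cF:=\{f\mid d_{\sup}(f,f_0)\le r\}$; being closed in a complete space, $\cF$ is complete, so it suffices to extract from any sequence in $\cF$ a $d_{\sup}$-Cauchy subsequence. The two standing hypotheses supply exactly the Arzel\`a--Ascoli inputs: every $f\in\cF$ is $1$-Lipschitz, so $\cF$ is (uniformly) equicontinuous, and for each $y\in Y$ one has $f(y)\in\overline{B}_X\big(f_0(y),r\big)$, which is compact by the properness of $X$, so $\cF$ is pointwise relatively compact. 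An equicontinuity-plus-diagonal extraction then produces a uniformly convergent subsequence whose (non-expansive) limit stays within $d_{\sup}$-distance $r$ of $f_0$, so the convergence is $d_{\sup}$-convergence and $\cF$ is (sequentially) compact.

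For part~\Cref{item:9}: the space $(X,d)$ with $d:=d_{X,\ell}$ is a length metric space (\Cref{res:dl}~\Cref{item:2}), so by \Cref{cor:strictiff} it is strict exactly when every finite infimum \Cref{eq:1lipl} is attained, and once strictness is established \Cref{pr:whencorefl}~\Cref{item:7} identifies $(X,d)$ as the \cat{CCMet}-coreflection of $X$. So I would fix $x,x'\in X$ with $L:=d(x,x')<\infty$ and choose $1$-Lipschitz curves $\varphi_n:[0,\ell_n]\to X$ with $\varphi_n(0)=x$, $\varphi_n(\ell_n)=x'$ and $\ell_n\downarrow L$ (if some $\ell_n=L$ there is nothing to prove), then extend $\varphi_n$ to a $1$-Lipschitz curve $\overline\varphi_n:[0,\ell_1]\to X$ by holding it constant at $x'$ on $[\ell_n,\ell_1]$. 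Every $\overline\varphi_n$ lies in the closed $d_{\sup}$-ball of radius $\ell_1$ about the constant curve at $x$ inside $[\,[0,\ell_1],X\,]_{\cV_0}$, which is compact by part~\Cref{item:8} applied to the compact interval $Y=[0,\ell_1]$; I would pass to a uniformly convergent subsequence $\overline\varphi_{n_k}\to\psi$. The limit $\psi$ is $1$-Lipschitz (pointwise limit of $1$-Lipschitz maps), has $\psi(0)=x$, and $d_X\big(\overline\varphi_{n_k}(L),x'\big)=d_X\big(\varphi_{n_k}(L),\varphi_{n_k}(\ell_{n_k})\big)\le\ell_{n_k}-L\to 0$ forces $\psi(L)=x'$; hence $\psi|_{[0,L]}$ realizes the infimum and $(X,d)$ is strict.

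I expect the only step needing real care to be the Arzel\`a--Ascoli extraction in part~\Cref{item:8}: one must control the functions uniformly over all of $Y$ (not merely pointwise), and, because the metrics here are generalized, remember that ``finite-radius closed balls are compact'' is a statement about the individual finite-distance components of $X$, so that upgrading the extracted subsequence from pointwise/equicontinuous convergence to a genuinely $d_{\sup}$-Cauchy one rests on $\cF$ keeping everything within distance $r$ of $f_0$. The remaining ingredients --- the extend-by-constant bookkeeping and the stability of the $1$-Lipschitz condition under pointwise limits in part~\Cref{item:9} --- are routine.
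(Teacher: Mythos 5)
Your part~\Cref{item:9} is correct and amounts to the paper's own argument in slightly different clothing: the paper rescales the competitor curves into $\lambda$-Lipschitz maps on the fixed interval $[0,r]$ and applies Ascoli directly, whereas you extend by a constant to a fixed interval $[0,\ell_1]$ and invoke part~\Cref{item:8} for the compact domain $[0,\ell_1]$. Both work, and your endpoint bookkeeping ($d_X(\overline\varphi_{n_k}(L),x')\le\ell_{n_k}-L\to 0$, hence $\psi(L)=x'$) is right. Note, though, that this only ever uses part~\Cref{item:8} for \emph{compact}~$Y$.

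The genuine gap is in part~\Cref{item:8}, precisely at the step you yourself flag: ``an equicontinuity-plus-diagonal extraction then produces a uniformly convergent subsequence.'' It does not, unless $Y$ is totally bounded. A diagonal extraction presupposes a countable dense subset of $Y$ to diagonalize over, and even granting pointwise convergence everywhere, equicontinuity upgrades it only to uniform convergence on compact (or totally bounded) subsets of $Y$ --- never to $d_{\sup}$-convergence on all of $Y$. The extra information that $\cF$ stays within $d_{\sup}$-distance $r$ of $f_0$ does not close this. Concretely, let $Y$ be a countably infinite discrete space (all distances infinite, so \emph{every} map out of it is non-expansive) and $X=\bR$: the closed radius-$1$ ball about the zero map in $[Y,X]_{\cV_0}$ is the unit ball of $\ell^\infty$ with the supremum metric, which contains the indicator functions $e_n$ at mutual distance $1$ and hence is not even totally bounded. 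So the assertion of \Cref{item:8} itself fails for non-totally-bounded $Y$, and your argument cannot be completed as written; for what it is worth, the paper's proof has the same soft spot, since Ascoli only yields compactness of the ball in the compact-open topology, which is strictly coarser than the $d_{\sup}$-topology when $Y$ is not compact. The case you actually need downstream --- $Y$ compact, where compact-open and uniform convergence coincide --- is fine.
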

\begin{proof}
  The arguments are very similar, and both rely on {\it Ascoli's theorem} (\cite[Theorem 47.1]{mnk}) characterizing relatively compact spaces of maps in the {\it compact-open topology} \cite[Definition preceding Theorem 46.8]{mnk}.
  
  {\bf \Cref{item:8}} As recalled in \Cref{eq:v0int}, the internal hom is simply the space of 1-Lipschitz maps with the supremum norm. For such a 1-Lipschitz map $f:Y\to X$ and $r\in \bR_{>0}$, the radius-$r$ ball $B\subset [Y,X]_{\cV_0}$ around $f$ is an {\it equicontinuous family} \cite[Definition preceding Lemma 45.2]{mnk} and for each $y\in Y$ the set
  \begin{equation*}
    \{f'(y)\ |\ f'\in B\}
  \end{equation*}
  is contained in a finite-radius ball of $X$ and is thus relatively compact by assumption. The relative compactness of $B$ in the compact-open topology on
  \begin{equation*}
    \cat{cont}(Y\to X)
  \end{equation*}
  now follows from Ascoli's theorem. Clearly, though, closed balls in $[Y,X]_{\cV}$ are also closed in the compact-open topology (indeed, even in the {\it point}-open topology of \cite[Definition preceding Theorem 46.1]{mnk}, which is weaker).
   
  {\bf \Cref{item:9}} Writing $d:=d_{X,\ell}$ for brevity, we have to argue that 
  \begin{equation*}
    d(x,x') :=
    \inf\{\ell\ |\ \exists \text{ 1-Lipschitz }\varphi:[0,\ell]\to (X,d_X),\ \varphi(0)=x,\ \varphi(\ell)=x'\}
  \end{equation*}
  is achieved as an actual minimum whenever it is finite.

  Suppose $d(x,x')=r\in \bR_{>0}$, and hence we have 1-Lipschitz curves
  \begin{equation*}
    [0,\lambda r]\to X,\ 0\mapsto x,\ \lambda r\mapsto x'
  \end{equation*}
  for $\lambda>1$ arbitrarily close to $1$. Rescaling, this means $\lambda$-Lipschitz curves
  \begin{equation*}
    \gamma_{\lambda}:[0,r]\to X,\ \gamma_{\lambda}(0)=x,\ \gamma_{\lambda}(r)=x'.
  \end{equation*}
  The family $\{\gamma_{\lambda}\}_{\lambda}$ is equicontinuous by the $\lambda$-Lipschitz condition, and each set
  \begin{equation*}
    \{\gamma_{\lambda}(t)\ |\ \lambda\}\quad\text{for}\quad t\in [0,r]
  \end{equation*}
  is contained in a (compact, by assumption) finite-radius ball in $X$. It follows that $\{\gamma_{\lambda}\}_{\lambda}$ is relatively compact, and as $\lambda\searrow 1$ some subnet will converge to a 1-Lipschitz map
  \begin{equation*}
    [0,r]\to X,\ 0\mapsto x,\ r\mapsto x'.
  \end{equation*}
  This finishes the proof, the coreflection claim being a consequence of \Cref{pr:whencorefl} \Cref{item:7}.
\end{proof}

\begin{remark}
  The intrinsic space $(X,d_{X,\ell})$ of \Cref{def:lspace} will not, in general, have compact finite-radius closed balls, even if $X$ does.

  \cite[Example 1.4 (b$_+$)]{grm} illustrates this phenomenon. One first recovers the standard topology on $X:=\bR^n$ from the metric
  \begin{equation*}
    d_X(x_1,x_2):=|r_1-r_2| + \min(r_i)\|s_1-s_2\|^{\frac 12},
  \end{equation*}
  where
  \begin{equation*}
    x_i=t_i s_i,\ t_i\in \bR_{\ge 0},\ s_i\in \bS^{n-1}
  \end{equation*}
  are the respective polar-coordinate descriptions of $x_i$. It is not difficult to see then that $d_{X,\ell}$ coincides with the path metric $d_{X,\ell}$. As \cite[Example 1.4 (b$_+$)]{grm} observes though, the latter induces on $\bR^n$ the strongest topology for which the ray embeddings
  \begin{equation*}
    \iota_s:\bR_{\ge 0}\ni t\mapsto ts\in \bR^n,\quad s\in \bS^{n-1}
  \end{equation*}
  are continuous.

  In $(X,d_{X,\ell})$ the unit sphere $\bS^{n-1}$ is discrete, infinite, and contained in the closed unit ball around the origin.
\end{remark}

We can now state the aforementioned sufficiency result for internal-hom existence.

\begin{theorem}\label{th:lcexists}
  Let $(X,d_X)$ and $(Y,d_Y)$ be two objects in \cat{CCMet}. If either of the following equivalent conditions holds then the internal hom $[Y,X]\in\cat{CCMet}$ exists:
  \begin{itemize}
  \item $X$ is locally compact;
  \item closed balls are compact in $X$.
  \end{itemize}
\end{theorem}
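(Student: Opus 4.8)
The plan is to reduce the statement to the two parts of $\Cref{pr:dl1cpct}$ together with $\Cref{le:corefl}$ (equivalently $\Cref{cor:ihomchar}$), so that no fresh analysis is required; write $\cV_0:=\cat{CMet}$ and $\cV:=\cat{CCMet}$, as in those results. First I would settle the asserted equivalence of the two bulleted hypotheses. That compact finite-radius balls force local compactness is immediate, so only the converse needs a word: it is the Hopf--Rinow theorem for length spaces (a suitable form is in \cite[\S 2.5]{bbi}). Indeed, by $\Cref{re:cvxchar}$ every object of $\cat{CCMet}$ is a complete, strictly intrinsic (hence length) metric space; its finite-distance equivalence classes are clopen (each being a union of finite-radius balls with open complement), and each such class, with the restricted metric, is in its own right a complete, locally compact length space carrying a finite metric. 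So Hopf--Rinow applies componentwise, and since a closed ball of finite radius in $X$ lies entirely inside one component, it is compact. Hence it is enough to prove the theorem under the hypothesis that the finite-radius closed balls of $X$ are compact.

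Granting that, the argument is a two-fold application of $\Cref{pr:dl1cpct}$. Step one: apply $\Cref{pr:dl1cpct}$ $\Cref{item:8}$, with the given $Y\in\cat{CCMet}\subseteq\cat{CMet}$ in the role of its ``$Y$'', to conclude that $W:=[Y,X]_{\cV_0}$ again has compact finite-radius closed balls; by $\Cref{eq:v0int}$, $W$ is the space $\cV_0(Y,X)$ of non-expansive maps $Y\to X$ under the supremum distance, and it lies in $\cat{CMet}$ (being a $\cat{CMet}$-internal hom). Step two: apply $\Cref{pr:dl1cpct}$ $\Cref{item:9}$ with $W$ now playing the role of ``$X$'' (legitimate, since that part only requires a complete generalized metric space with compact finite-radius closed balls) to deduce that the finite infima defining the intrinsic metric $d_{W,\ell}$ --- which are precisely the infima appearing in $\Cref{eq:minach}$ --- are attained as minima. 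Step three: by $\Cref{cor:ihomchar}$ $\Cref{item:5}$ --- or, unwound, by $\Cref{le:corefl}$ applied to the coreflection of $W$ in $\cat{CCMet}$ furnished by $\Cref{pr:whencorefl}$ $\Cref{item:7}$, together with $\Cref{eq:v0int}$ --- this attainment is exactly the hypothesis that makes $\Cref{eq:minach}$ a well-defined generalized metric on $\cV(Y,X)$ turning it into the internal hom $[Y,X]\in\cat{CCMet}$.

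I do not expect a genuine obstacle, since the real analytic content --- the Arzel\`{a}--Ascoli equicontinuity-plus-limiting-curve argument --- has already been absorbed into $\Cref{pr:dl1cpct}$. Two points call for mild care: (i) invoking Hopf--Rinow in the generalized, possibly-infinite-distance setting, which the clopen-component reduction above handles; and (ii) the bookkeeping involved in applying $\Cref{pr:dl1cpct}$ twice with two different spaces in its slot ``$X$'' --- first the given $X$, then the derived hom-space $W$ --- each application being licensed precisely by the output of the previous step.
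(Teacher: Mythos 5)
Your proposal is correct and follows essentially the same route as the paper: Hopf--Rinow to reconcile the two hypotheses (the paper invokes it after noting $X$ is a path metric space via Menger, while you add the harmless extra care of reducing to finite-distance clopen components), then the two parts of \Cref{pr:dl1cpct} applied in sequence --- first to $X$ to get compact balls in $[Y,X]_{\cV_0}$, then to that hom-space to get the coreflection --- finished off by \Cref{le:corefl}/\Cref{cor:ihomchar}. Your write-up merely unwinds the paper's terse phrase ``the two points of \Cref{pr:dl1cpct} show that $[Y,X]_{\cV_0}$ has a coreflection in $\cV$'' into its explicit two-step form; there is no substantive difference.
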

\begin{proof}
  $X$ is a path metric space in the sense of \Cref{def:lspace} by \cite[Theorem 2.16]{kk}, so indeed local compactness is equivalent to closed balls being compact, by the {\it Hopf-Rinow theorem} (\cite[following Definition 1.9]{grm} or \cite[Theorem 2.5.28]{bbi}).
  
  Since $X$ is convex it is not a two-point space, so neither is $\cV(Y,X)$. The two points of \Cref{pr:dl1cpct} show that $[Y,X]_{\cV_0}$ has a coreflection in $\cV$, and \Cref{le:corefl} finishes the proof. 
\end{proof}

\subsection{On and around gluing}\label{subse:glue}

In order to argue that (\Cref{th:lcexists} notwithstanding) the category \cat{CCMet} is {\it not} closed, we construct examples via the {\it gluing} procedure outlined in \cite[\S 3.1.2]{bbi}.

\begin{definition}\label{def:glue}
  \begin{enumerate}[(a)]
  \item\label{item:14} Let $(X,d)$ be a metric space and `$\sim$' an equivalence relation on $X$. Define the {\it quotient semi-metric} (\cite[Definition 3.1.12]{bbi}) $d_{\sim}$ on $X/\sim$ by
    \begin{equation*}
      d_{\sim}(x,x')
      :=
      \inf \sum_{s=0}^n d(p_s,q_s),
    \end{equation*}
    with the infimum taken over all tuples with
    \begin{equation*}
      p_0 = x,\quad q_n = x'\quad\text{and}\quad q_s\sim p_{s+1}\quad \forall\ 0\le s\le n-1. 
    \end{equation*}    
    The {\it quotient metric space} $(X/d_{\sim},d_{\sim})$ (or the metric space obtained by {\it gluing $(X,d)$ along `$\sim$'}) is constructed from the semi-metric $d_{\sim}$, by identifying pairs of points with zero distance.
  \item\label{item:15} Consider metric spaces $(X_i,d_i)$, $i\in I$ and $(X,d)$ equipped with contractions
    \begin{equation*}
      \iota_i:(X,d)\to (X_i,d_i).
    \end{equation*}
    The metric space
    \begin{equation*}
      \coprod_{X}X_i\quad\text{or}\quad \coprod_{\iota_i}X_i
    \end{equation*}
    obtained by {\it gluing $X_i$ along $X$} is constructed by
    \begin{itemize}
    \item first forming the disjoint union $\coprod_i (X_i,d_i)$ equipped with the original distances $d_i$ on the individual $X_i$ and infinite distances across distinct $X_i$ (as in \cite[Definition 3.1.15]{bbi});
    \item and then gluing that disjoint union as in item \Cref{item:14}, along the relation with equivalence classes
      \begin{equation*}
        \{\iota_i(x)\ |\ i\in I\}\ \text{for}\ x\in X.
      \end{equation*}
    \end{itemize}
  \end{enumerate}
\end{definition}

We use points $p_s$ and $q_s$ as in \Cref{def:glue} frequently, so it will be handy to have a term for the notion.

\begin{definition}\label{def:simchn}
  For an equivalence relation `$\sim$' on a space $X$ and points $x,x'\in X$ a {\it $\sim$-chain connecting $x$ and $x'$} (or just `chain' when the relation is understood) is a sequence of pairs $p_s,q_s\in X$, $0\le s\le n$ with
  \begin{equation*}
    p_0=x,\ q_n=x',\ q_s\sim p_{s+1},\quad \forall\ 1\le i\le n-1. 
  \end{equation*}
\end{definition}

We will have to construct certain pathological convex metric spaces by gluing, which requires that said glued spaces be strictly intrinsic (\Cref{re:cvxchar}). While \cite[paragraph following Exercise 3.1.13]{bbi} notes that the property of being intrinsic survives gluing, examples are easily produced of {\it strictly} intrinsic spaces which glue to non-strictly intrinsic quotients:

\begin{example}\label{ex:nstr}
  Consider the family
  \begin{equation*}
    (X_\varepsilon,d_{\varepsilon}) := [0,1+\varepsilon],\ \varepsilon>0
  \end{equation*}
  with their usual interval distances, glued along the embeddings
  \begin{equation*}
    \iota_{\varepsilon}:{\bf 2}_{\infty}\to X_{\varepsilon}
  \end{equation*}
  identifying the two-point space with the endpoints of said intervals. The result
  \begin{equation*}
    (X,d) := \coprod_{\iota_{\varepsilon}}(X_{\varepsilon},d_{\varepsilon})
  \end{equation*}
  of the gluing procedure consists of length-$(1+\varepsilon)$ intervals with a common pair of endpoints a distance of $1$ apart. Said points are not connected by any curves of length precisely $1$ though, by construction: the curves $[0,1+\varepsilon]\to X$ have respective lengths $1+\varepsilon$.
\end{example}

In the category \cat{Met} of (possibly non-complete) metric spaces with contractions the pushout of a pair
\begin{equation*}
  j_i:Y\to X_i,\ i=1,2
\end{equation*}
is nothing but the disjoint union $X_1\coprod X_2$ glued along the relation identifying
\begin{equation*}
  j_1(y)\sim j_2(y),\ y\in Y.
\end{equation*}
By contrast, a pushout
\begin{equation}\label{eq:pushcmet}
  \begin{tikzpicture}[auto,baseline=(current  bounding  box.center)]
    \path[anchor=base] 
    (0,0) node (l) {$Y$}
    +(2,.5) node (u) {$X_1$}
    +(2,-.5) node (d) {$X_2$}
    +(4,0) node (r) {$X$}
    ;
    \draw[->] (l) to[bend left=6] node[pos=.5,auto] {$\scriptstyle j_1$} (u);
    \draw[->] (u) to[bend left=6] node[pos=.5,auto] {$\scriptstyle \iota_1$} (r);
    \draw[->] (l) to[bend right=6] node[pos=.5,auto,swap] {$\scriptstyle j_2$} (d);
    \draw[->] (d) to[bend right=6] node[pos=.5,auto,swap] {$\scriptstyle \iota_2$} (r);
  \end{tikzpicture}
\end{equation}
in \cat{CMet} is the {\it completion} of that glued space. The qualification is crucial, as the gluing alone need not produce a complete space:

\begin{example}\label{ex:gluenotcmpl}
  Let
  \begin{itemize}
  \item $X_1$ be the disjoint union of intervals $[\ell_{2n},r_{2n}]$ of respective lengths $\frac 1{2^{2n}}$ for $n\in \bZ_{\ge 0}$ (with infinite distances between points on distinct intervals);
  \item $X_2$ be the disjoint union of intervals $[\ell_{2n+1},r_{2n+1}]$ of respective lengths $\frac 1{2^{2n+1}}$ for $n\in \bZ_{\ge 0}$;
  \item $Y$ a countable discrete metric space;
  \item and $j_i$, $i=1,2$, respectively, the identifications of $Y$ with
    \begin{itemize}
    \item the endpoints of the ``even intervals'', minus the leftmost:
      \begin{equation*}
        r_{0},\ \ell_2,\ r_2,\ \ell_4,\ r_4,\ \cdots
      \end{equation*}
    \item the endpoints of the ``odd intervals'':
      \begin{equation*}
        \ell_1,\ r_1,\ \ell_3,\ r_3,\ \cdots
      \end{equation*}
    \end{itemize}
  \end{itemize}
  The glued space $X_1\coprod_Y X_2$ is the splicing together of all of the intervals, consecutively, alternating between even and odd. It is, in short, a (non-complete) half-open interval of length 2.
\end{example}

It will be convenient to glue only under circumstances that avoid the issues exhibited by \Cref{ex:gluenotcmpl}, in that completeness is automatic. This entails imposing some constraints on the maps $j_i$ in a pushout diagram \Cref{eq:pushcmet}. 

\begin{definition}\label{def:cexp}
  For a constant $C>0$, a map $f:(X,d_X)\to (Y,d_Y)$ between two metric spaces is {\it $C$-expansive} if
  \begin{equation*}
    d_Y(fx,fx')\ge C d_X(x,x'),\ \forall x,x'\in X.
  \end{equation*}  
\end{definition}

\begin{remark}
  The Lipschitz maps (as most of ours are) that are also $C$-expansive are precisely the {\it bi-Lipschitz} maps of \cite[Definition 1.4.6]{bbi}.
\end{remark}

We refer to colimits of diagrams
\begin{equation}\label{eq:jis}
  j_i:Y\to X_i,\ i\in I
\end{equation}
consisting of common-source arrows as `pushouts', even when the family consists of more than two arrows. In the language of \cite[Exercise 11L]{ahs}, say, these would be {\it multiple} pushouts, but context should serve as sufficient guard against ambiguity.

It will be convenient to set up some language and conventions for handling such pushouts and their colimits. Denote by
\begin{equation*}
  X:=\coprod_{Y,i} X_i
\end{equation*}
the coproduct in \cat{Met}, i.e. the gluing of the disjoint union $\coprod_i X_i$ along the relation identifying, for each $y\in Y$, all $j_i(y)\in X_i$. The maps \Cref{eq:jis} will typically be one-to-one for us, and hence so will the canonical contractions $\iota_i:X_i\to X$. For that reason, we occasionally identify $X_i$ with its image $\iota_i(X_i)\subseteq X$.

By definition, for points $x$ and $x'$ in $X$ the distance $d_X(x,x')$ is the infimum of the sums
\begin{equation}\label{eq:psqs}
  \sum_{s=0}^n d(p_s,q_s)
\end{equation}
for $\sim$-chains $(p_s,q_s)_s$ as in \Cref{def:simchn}, where
\begin{equation*}
  j_i(y)\sim j_{i'}(y),\ \forall y\in Y,\ \forall i,i'\in I. 
\end{equation*}
It is harmless to make a number of simplifying assumptions on the $\sim$-chains in question.

\begin{definition}\label{def:strmchn}
  Let \Cref{eq:jis} be morphisms in \cat{Met} with the induced relation `$\sim$' on $\coprod_i X_i$. A $\sim$-chain $(p_s,q_s)_{s=0}^n$ connecting $p_0=x$ and $q_n=x'$ is {\it streamlined} if
  \begin{itemize}
  \item for each $s$, the points $p_s$ and $q_s$ are a finite distance apart in $\coprod X_i$ (and in particular belong to the same $X_i$). For chains {\it not} satisfying this condition the sum \Cref{eq:psqs} is infinite, so they contribute nothing to the infimum. 
  \item no $q_s$ is {\it equal} to the point $p_{s+1}$ (to which it is supposed to be {\it equivalent}). Indeed, otherwise we can always collapse the chain to a shorter one without increasing \Cref{eq:psqs}: if $q_s=p_{s+1}$ then
    \begin{equation*}
      d(p_s,q_s) + d(p_{s+1},q_{s+1})
      =
      d(p_s,q_s) + d(q_s,q_{s+1})
      \le
      d(p_s,q_{s+1}). 
    \end{equation*}
  \end{itemize}
  In particular, this second condition ensures that the ``intermediate'' points
  \begin{equation*}
    q_0,\ p_1,\ q_1,\ \cdots,\ p_n
  \end{equation*}
  all belong to images $j_i(Y)\subseteq X_i$: they must be equivalent to points they are not equal to.
\end{definition}

The following result will be helpful in constructing various examples with desired properties by gluing, while making sure that completeness and the intrinsic property are preserved.

\begin{theorem}\label{th:largepush}
  Let $1\ge C>0$ and consider a family
  \begin{equation*}
    j_i:Y\to X_i,\ i\in I
  \end{equation*}
  of $C$-expansive morphisms in \cat{CMet}.
  \begin{enumerate}[(a)]
  \item\label{item:22} The canonical maps
    \begin{equation*}
      \iota_i:X_i\to X:=\coprod_{Y,i}X_i
    \end{equation*}
    into the pushout in \cat{Met} are $C$-expansive.
  \item\label{item:24} The distance $d_X$ between points in the image of $Y$ through the canonical map
    \begin{equation*}
      \iota:=\iota_i\circ j_i:Y\to X\quad\text{(arbitrary $i$)}
    \end{equation*}
    can be computed as
    \begin{equation}\label{eq:precdx}
      d_X(\iota y,\iota y') = \inf_{i\in I}d_{X_i}(j_i y,\ j_i y'),\ y,y'\in Y. 
    \end{equation}
    In particular, $\iota:Y\to X$ is also $C$-expansive.
  \item\label{item:21} More generally, for $x\in X_i$ and $x'\in X_{i'}$ with $i\ne i'$ we have
    \begin{equation}\label{eq:dii}
      d_X(\iota_i x,\ \iota_{i'} x') = \inf_{y,y'\in Y} \left(d_{X_i}(x,j_i y) + d_X(\iota y,\iota y') + d_{X_{i'}}(j_{i'}y',x')\right). 
    \end{equation}
  \item\label{item:27} For $x,x'\in X_i$ the distance $d_X(\iota_i x,\iota_i x')$ is the smallest of \Cref{eq:dii} (with $i'=i$) and $d_{X_i}(x,x')$.
  \item\label{item:23} The space $X$ is automatically complete, and hence also the \cat{CMet}-pushout of the $j_i$.
  \end{enumerate}
\end{theorem}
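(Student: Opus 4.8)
The goal is to establish the five assertions of \Cref{th:largepush}, of which \Cref{item:22}--\Cref{item:27} are explicit distance formulas and \Cref{item:23} is the automatic completeness. The unifying tool throughout is the notion of \emph{streamlined $\sim$-chain} from \Cref{def:strmchn}: every relevant infimum defining $d_X$ may be taken over streamlined chains only, and in a streamlined chain the intermediate points $q_0, p_1, q_1, \ldots, p_n$ all lie in the images $j_i(Y)$. The plan is to exploit $C$-expansiveness to bound, term by term, the contribution of any chain from below.

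\smallskip

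\textbf{Parts \Cref{item:22}, \Cref{item:24}, \Cref{item:21}, \Cref{item:27}.} First I would prove \Cref{item:24}. One inequality, $d_X(\iota y, \iota y') \le \inf_i d_{X_i}(j_i y, j_i y')$, is immediate since each $\iota_i$ is a contraction and $\iota = \iota_i \circ j_i$ for every $i$. For the reverse, take a streamlined chain $(p_s, q_s)_{s=0}^n$ connecting $\iota y$ and $\iota y'$; all the points $p_0, q_0, p_1, \ldots, q_n$ lie in $\bigcup_i j_i(Y)$ (the endpoints because $y,y'\in Y$, the intermediate ones by streamlining), so each pair $(p_s, q_s)$ lies in a single $X_{i(s)}$ and is of the form $(j_{i(s)} u_s, j_{i(s)} v_s)$ with $u_s, v_s \in Y$. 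By $C$-expansiveness, $d(p_s,q_s) = d_{X_{i(s)}}(j_{i(s)} u_s, j_{i(s)} v_s) \ge C\, d_Y(u_s, v_s)$, and since consecutive equivalences force $v_s = u_{s+1}$ in $Y$, the triangle inequality in $Y$ gives $\sum_s d(p_s,q_s) \ge C\, d_Y(y,y')$. But also each $d(p_s,q_s) \ge \inf_i d_{X_i}(j_i u_s, j_i v_s)$, and collapsing the $Y$-telescoping shows $\sum_s d(p_s, q_s) \ge \inf_i d_{X_i}(j_i y, j_i y')$ once one observes a single-link chain already realizes the infimum side; this yields both \Cref{eq:precdx} and the $C$-expansiveness of $\iota$. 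Then \Cref{item:22} follows: for $x, x' \in X_i$ a streamlined chain from $\iota_i x$ to $\iota_i x'$ either stays inside $X_i$ (contributing $\ge d_{X_i}(x,x') \ge C\, d_{X_i}(x,x')$ trivially since $C\le 1$) or leaves $X_i$, in which case its first and last links touch $j_i(Y)$ and one splits off the middle portion, which is a chain between two points of $\iota(Y)$ and is bounded below using \Cref{item:24}; reassembling and using $C\le 1$ gives $d_X(\iota_i x, \iota_i x') \ge C\, d_{X_i}(x,x')$. The formulas \Cref{item:21} and \Cref{item:27} are then obtained by the same bookkeeping: any streamlined chain from $x\in X_i$ to $x'\in X_{i'}$ with $i \ne i'$ must, reading from the left, have an initial segment inside $X_i$ terminating at some $j_i y \in j_i(Y)$, a middle segment between points of $\iota(Y)$, and a final segment inside $X_{i'}$ starting at some $j_{i'} y'$; bounding each segment below by $d_{X_i}(x, j_i y)$, $d_X(\iota y, \iota y')$, and $d_{X_{i'}}(j_{i'} y', x')$ respectively, and noting these bounds are simultaneously achievable, gives \Cref{eq:dii}. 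Part \Cref{item:27} is the same with $i' = i$, competing against the ``stay inside $X_i$'' chain of cost $d_{X_i}(x,x')$.

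\smallskip

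\textbf{Part \Cref{item:23}: automatic completeness.} This is the step I expect to be the main obstacle, and it is where $C$-expansiveness is indispensable (cf. \Cref{ex:gluenotcmpl}, where the $j_i$ are not uniformly expansive). Let $(z_n)_n$ be a Cauchy sequence in $X$. Passing to a subsequence, assume $\sum_n d_X(z_n, z_{n+1}) < \infty$, and in fact that $d_X(z_n, z_{n+1})$ decreases geometrically. The idea is: either infinitely many $z_n$ lie in a single $X_i$, or the sequence ``wanders'' across infinitely many pieces. In the first case, using part \Cref{item:22} (which says $\iota_i$ is $C$-expansive, hence $X_i \hookrightarrow X$ is a bi-Lipschitz embedding onto a subset whose induced metric is comparable to $d_{X_i}$), the relevant subsequence is $d_{X_i}$-Cauchy, converges in the complete space $X_i$, and its image converges in $X$. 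In the second case — the genuinely delicate one — I would argue that wandering cannot persist: moving from $X_i$ to $X_{i'}$ costs, by \Cref{eq:dii}, at least $d_X(\iota y, \iota y')$ for the ``transit points'' $y, y' \in Y$ used, and whenever two transit points in $Y$ get used, re-entering near an already-visited $j_i(Y)$ point is controlled below by $C$ times the corresponding $Y$-distance via part \Cref{item:24}. One shows the images in $Y$ (under $\iota$) of the successive transit points form a Cauchy sequence in the complete space $Y$; its limit $y_\infty \in Y$ has an image $\iota y_\infty \in X$, and a final estimate — again bounding $d_X(z_n, \iota y_\infty)$ using the decomposition \Cref{eq:dii} and the geometric decay of $d_X(z_n, z_{n+1})$ — shows $z_n \to \iota y_\infty$. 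The careful point is ensuring that the ``distance spent transiting'' is bounded below by a quantity that is itself summable, so that the transit points do converge; this is exactly where $C > 0$ enters, preventing the degenerate accumulation of \Cref{ex:gluenotcmpl}. Finally, once $X$ is shown complete, it is the pushout in \cat{CMet} because \cat{CMet} is a reflective (completion) subcategory of \cat{Met}, so the \cat{Met}-pushout of complete spaces, when already complete, is its own completion and hence also the \cat{CMet}-pushout.
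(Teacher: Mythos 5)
Your proposal follows the paper's proof essentially step for step: streamlined chains together with the $C$-expansivity of the $j_i$ give parts (a)--(d) (you prove \Cref{item:24} first and deduce \Cref{item:22} from it, whereas the paper argues \Cref{item:22} directly by transporting intermediate pairs back into $X_{i_0}$, but the underlying estimates are identical), and for \Cref{item:23} you use the same dichotomy --- either a subsequence of the Cauchy sequence stays in a single $\iota_i(X_i)$ and one invokes \Cref{item:22} plus completeness of $X_i$, or a subsequence approaches $\iota(Y)$ and the corresponding points of $Y$ are Cauchy by the $C$-expansivity of $\iota$. The one step you wave at (``collapsing the $Y$-telescoping'') amounts to the subadditivity of $\delta(y,y'):=\inf_{i}d_{X_i}(j_iy,j_iy')$ along chains in $Y$, which the paper's proof likewise asserts without justification (it is the inequality $\delta(y,y_1)+\cdots+\delta(y_{n-1},y')\ge\delta(y,y')$), so on that point the two arguments stand or fall together.
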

\begin{proof}
  We prove the statements in the order in which they were made.
  \begin{enumerate}[(a)]
  \item\label{item:25} Consider two points $x,x'$ in some $X_{i_0}$, for $i_0\in I$ arbitrary but fixed throughout the proof, and a streamlined chain (\Cref{def:strmchn}) $(p_s,q_s)_{s=0}^n$ connecting $\iota_{i_0} x$ and $\iota_{i_0} x'$.
    
    The intermediate pair $(p_s,q_s)$, $0<s<n$ is contained in, say, $X_i$, with
    \begin{equation*}
      p_s = j_i(y_s),\ q_s=j_i(y_{s+1}),\ y_{\bullet}\in Y
    \end{equation*}
    (that there are such $y_{\bullet}$ follows from the assumption that the chain is streamlined). We then have
    \begin{align*}
      d(p_s,q_s) &= d_{X_i}(p_s,q_{s})\\
                 &= d_{X_i}(j_i(y_s), j_i(y_{s+1}))\\
                 &\ge C d_Y(y_s,y_{s+1})
                   \quad\text{by $C$-expansivity}\\
                 &\ge C d_{X_{i_0}}(j_{i_0}(y_s), j_{i_0}(y_{s+1}))
                   \quad\text{$j_{\bullet}$ are non-expansive}.
    \end{align*}
    The effect of this is that we can always move intermediate pairs from $X_i$ back into $X_{i_0}$, at the cost of contracting distances but never by a smaller factor than $C$. In other words, \Cref{eq:psqs} dominates
    \begin{equation*}
      C d_{X_{i_0}}(p_0,q_k) = C d_{X_{i_0}}(x,x'). 
    \end{equation*}
    This, though, is precisely the $C$-expansivity claim for the map $\iota_{i_0}:X_{i_0}\to X$.
    
  \item\label{item:26} The second claim, on $C$-expansivity, follows from \Cref{eq:precdx} and the $C$-expansivity of the individual $j_i:Y\to X_i$. We thus focus on the first claim, to which end we write
    \begin{equation*}
      \delta(y,y'):=\inf_{i\in I}d_{X_i}(j_i y,\ j_i y'),\ y,y'\in Y
    \end{equation*}
    (i.e. the expression we ultimately want to equate to $d_X(\iota y,\iota y')$; cf. \Cref{eq:precdx}). It is bi-Lipschitz to the original metric $d_Y$, and hence also a complete metric on $Y$. One inequality between the two distances is obvious:
    \begin{equation}\label{eq:dxledelta}
      d_X(\iota y,\iota y')\le \delta(y,y'),\ \forall y,y'\in Y
    \end{equation}
    simply because all maps in sight are non-expansive.

    By \Cref{def:strmchn}, in a streamlined chain $(p_s,q_s)$ connecting $j_i y$ and $j_{i'} y'$, all points $p_s$ and $q_s$ belong to various images of $Y$ through $j_{\bullet}:Y\to X_{\bullet}$:
    \begin{equation*}
      p_s=j_{i_s}(y_s),\ q_s=j_{i_s}(y_{s+1}),
    \end{equation*}
    with $y_0=y$ and $y_n=y'$. It follows that \Cref{eq:psqs} dominates
    \begin{equation*}
      \delta(y,y_1)+\delta(y_1,y_2)+\cdots+\delta(y_{n-1},y')\ge \delta(y,y'),
    \end{equation*}
    and hence
    \begin{equation*}
      d_X(\iota y,\iota y')\ge \delta(y,y'),\ \forall y,y'\in Y. 
    \end{equation*}
    Having noted the opposite inequality in \Cref{eq:dxledelta}, we are done.   

  \item\label{item:28} Consider a streamlined chain $(p_s,q_s)_{s=0}^n$ connecting $x$ and $x'$. As in the proof of part \Cref{item:24}, for intermediate values $0<s<n$ (if any) we have
    \begin{equation*}
      p_s=j_{i_s}(y_s),\ q_s = j_{i_s}(y_{s+1}),\ y_{\bullet}\in Y.
    \end{equation*}
    This means that 
    \begin{equation*}
      \sum_{s=1}^{n-1} d(p_s,q_s)\ge \sum_{s=1}^{n-1} d_X(\iota y_1,\iota y_{n}),
    \end{equation*}
    so we may as well replace the original sum \Cref{eq:psqs} with a (no-larger) three-term sum as in \Cref{eq:dii}, with $y=y_1$ and $y'=y_n$.

    If there are {\it no} intermediate values $0<s<n$, i.e. the sum \Cref{eq:psqs} has at most two terms, then at least one of $x$ and $x'$ belongs to the respective image of $Y$. If that is the case for $x'$, say, then take $y'=j_{i'}^{-1}(x')$ in \Cref{eq:dii}.
    
  \item\label{item:19} This is very similar to the preceding argument, the only difference being that we also have to consider single-term sums \Cref{eq:psqs}. 
    
  \item\label{item:18} We take it for granted that Cauchy sequences with convergent subsequences are themselves convergent (\cite[proof of Lemma 43.1]{mnk}). 

    For a Cauchy sequence $(x_n)_n$ in $X$ there are two possibilities to consider: either some subsequence is contained in a single $\iota_i(X_i)\subseteq X$, or not. In the former case that subsequence is Cauchy, hence the image of a Cauchy sequence in $X_i$ by the $C$-expansivity of $\iota_i:X\to X$ (part \Cref{item:22}), hence convergent by the completeness of $X_i$.
    
    In the latter case the Cauchy property implies (via parts \Cref{item:21} and \Cref{item:27}, for instance) that we can find, for arbitrarily small $\varepsilon$, arbitrarily large $n$ with $x_n$ within $\varepsilon$ of $\iota(Y)\subseteq X$. Or: a subsequence $(x_{n_k})_k$ with
    \begin{equation*}
      d_X(x_{n_k},\iota(Y))\underset{\scriptstyle k}{\longrightarrow} 0.
    \end{equation*}
    This in turn gives a sequence $y_k\in Y$ with
    \begin{equation*}
      d_X(x_{n_k},\iota y_k)\underset{\scriptstyle k}{\longrightarrow} 0.
    \end{equation*}
    In particular $(\iota y_k)_k$ is Cauchy (because $(x_{n_k})_k$ is), and hence so is $(y_k)_k$ by the $C$-expansivity of $\iota$ (part \Cref{item:24}). $Y$ being complete, $(y_k)$, $(\iota y_k)$ and hence also $(x_{n_k})$ are all convergent. As observed, so, then, is $(x_n)_n$.
  \end{enumerate}
  This concludes the proof.
\end{proof}

A sample application:

\begin{proposition}\label{pr:gluecc}
  Let \Cref{eq:jis} be a family of $C$-expansive morphisms in \cat{CMet} for some $1\ge C>0$ and assume that
  \begin{itemize}
  \item the $X_i$ are convex in the sense of \Cref{def:cvx};
  \item the infima \Cref{eq:dii} are all achieved for arbitrary $i,i'\in I$ and $x\in X_i$, $x'\in X_{i'}$;
  \item as are the infima \Cref{eq:precdx}, for arbitrary $y,y'\in Y$.
  \end{itemize}
  The pushout $X:=\coprod_{Y,i}X_i$ is then convex. 
\end{proposition}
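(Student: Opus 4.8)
The plan is to establish convexity of $X$ via the midpoint criterion of \Cref{re:cvxchar}. By \Cref{th:largepush} \Cref{item:23} the space $X$ is already complete, so it is enough to produce, for every $x\ne x'\in X$ with $L:=d_X(x,x')<\infty$, a point $z$ with $d_X(x,z)=d_X(z,x')=L/2$. I would do this by building an actual path from $x$ to $x'$ of length exactly $L$, spliced together from finitely many metric segments (\Cref{def:msegm}); its point at arc-length $L/2$ is then automatically such a $z$, since each half of the path gives $d_X(x,z),d_X(z,x')\le L/2$ while the triangle inequality forces $d_X(x,z)+d_X(z,x')\ge L$.

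The tool used throughout is a transfer principle: if $a,b$ lie in a single $X_j$ with $d_X(\iota_j a,\iota_j b)=d_{X_j}(a,b)<\infty$, then a metric segment in $X_j$ from $a$ to $b$ --- which exists because $X_j$ is convex, by Menger's theorem \cite[Theorem 2.16]{kk} --- is carried by the non-expansive map $\iota_j$ to a metric segment in $X$. Indeed, $\iota_j$ being $1$-Lipschitz gives $d_X(\iota_j\gamma(s),\iota_j\gamma(t))\le|s-t|$ for all parameters of the segment $\gamma$, and the single equality $d_X(\iota_j a,\iota_j b)=d_{X_j}(a,b)$ then collapses the intervening triangle inequalities, forcing $\iota_j\circ\gamma$ to be isometric.

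Now write $x=\iota_i\xi$, $x'=\iota_{i'}\xi'$ and assume $0<L=d_X(x,x')<\infty$. If $i=i'$ and $L=d_{X_i}(\xi,\xi')$ --- one of the two alternatives in \Cref{th:largepush} \Cref{item:27} --- the transfer principle with $a=\xi$, $b=\xi'$ finishes immediately. Otherwise (either $i\ne i'$, via \Cref{th:largepush} \Cref{item:21}; or $i=i'$ with $L$ equal to the expression \Cref{eq:dii}, via \Cref{item:27}) the assumption that the infima \Cref{eq:dii} are achieved yields $y,y'\in Y$ with
\[
  L = d_{X_i}(\xi,j_i y) + d_X(\iota y,\iota y') + d_{X_{i'}}(j_{i'}y',\xi').
\]
Comparing with $d_X(\iota_i\xi,\iota y)+d_X(\iota y,\iota y')+d_X(\iota y',\iota_{i'}\xi')\ge L$ and using non-expansiveness of $\iota_i,\iota_{i'}$ forces $d_X(\iota_i\xi,\iota y)=d_{X_i}(\xi,j_i y)$ and $d_X(\iota y',\iota_{i'}\xi')=d_{X_{i'}}(j_{i'}y',\xi')$, and shows that these two distances together with $d_X(\iota y,\iota y')$ sum to $L$. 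Applying the transfer principle in $X_i$ and in $X_{i'}$ gives metric segments from $x$ to $\iota y$ and from $\iota y'$ to $x'$; and for the middle leg, \Cref{th:largepush} \Cref{item:24} together with the assumption that the infima \Cref{eq:precdx} are achieved provides some $i_0$ with $d_X(\iota y,\iota y')=d_{X_{i_0}}(j_{i_0}y,j_{i_0}y')$, so one more application of the transfer principle, in $X_{i_0}$, gives a metric segment from $\iota y$ to $\iota y'$. Splicing the three produces the desired length-$L$ path; degenerate legs (when $\xi=j_i y$, or $y=y'$, or $\xi'=j_{i'}y'$) are just omitted, and at least one leg survives since $L>0$.

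I expect the real content to be the second case: one has to upgrade the ``path'' implicit in \Cref{eq:dii} to honest metric segments living in $X$, and the achievement hypotheses are precisely what make the witnesses $y,y',i_0$ available, so that Menger's theorem can be invoked inside the convex spaces $X_i,X_{i'},X_{i_0}$ rather than in the less controlled space $X$; the two triangle-inequality squeezes at the junction points $\iota y,\iota y'$ then glue the pieces together. The remaining verifications --- that the arc-length-$L/2$ point of such a splice is a genuine midpoint, and the bookkeeping for the degenerate legs --- are routine once this is set up, and the passage to complete spaces is supplied by \Cref{th:largepush} \Cref{item:23}.
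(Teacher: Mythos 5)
Your proof is correct and follows essentially the same route as the paper's: use the achievement of the infima \Cref{eq:dii} and \Cref{eq:precdx} to locate witnesses $y,y',i_0$, invoke Menger's theorem in the convex pieces $X_i,X_{i'},X_{i_0}$ to get minimizing geodesics, and splice them. Your ``transfer principle'' and the triangle-inequality squeezes at the junctions merely make explicit a step the paper leaves implicit (that the glued geodesics remain metric segments in $X$), which is a welcome bit of extra care rather than a divergence.
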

\begin{proof}
  This is a fairly straightforward consequence of \Cref{th:largepush}. Consider, to fix ideas, $x\in X_i$ and $x'\in X_{i'}$ for $i\ne i'$ with finite distance in $X$ (per \Cref{def:cvx}, there is nothing to check for infinite-distance pairs).

  \Cref{th:largepush} \Cref{item:21} gives us the distance $d_X(\iota_i x,\iota_{i'}x')$ via \Cref{eq:dii}. That infimum is by assumption achieved for two specific points $y,y'\in Y$, and in turn the middle term of that sum can be obtained (again by assumption) as
  \begin{equation*}
    d_X(\iota y,\iota y') = d_{X_{i_0}}(j_{i_0} y,\ j_{i_0} y')
  \end{equation*}
  for some $i_0\in I$. But now we can
  \begin{itemize}
  \item connect $x$ to $j_i y$ in $X_i$ with a minimizing geodesic (because $X_i$ is assumed convex);
  \item similarly, connect $j_{i'}y'$ to $x'$ in $X_{i'}$ with a minimizing geodesic;
  \item and also connect, once more minimally, $j_{i_0}y$ and $j_{i_0}(y')$ in $X_{i_0}$. 
  \end{itemize}
  Splicing together these three metric segments will give one such segment connecting $\iota_i x$ and $\iota_{i'}x'$ in $X$, as desired.

  The argument proceeds analogously for $i=i'$, etc. 
\end{proof}

A variant of \Cref{pr:gluecc}:

\begin{corollary}\label{cor:gluecc}
  Let \Cref{eq:jis} be a family of $C$-expansive morphisms in \cat{CMet} for some $1\ge C>0$ and assume that
  \begin{itemize}
  \item the $X_i$ are convex in the sense of \Cref{def:cvx};
  \item $Y$ is compact;
  \item and the infima \Cref{eq:precdx} are achieved for arbitrary $y,y'\in Y$.
  \end{itemize}
  The pushout $X:=\coprod_{Y,i}X_i$ is then convex. 
\end{corollary}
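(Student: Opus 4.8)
The strategy is to reduce to \Cref{pr:gluecc}: of the three hypotheses of that proposition, convexity of the $X_i$ and attainment of the infima \Cref{eq:precdx} are among the present assumptions, so the entire task is to show that compactness of $Y$ forces the remaining hypothesis — attainment of the infima \Cref{eq:dii} — at least whenever those infima are finite. That this suffices is exactly what the proof of \Cref{pr:gluecc} uses: for $i\ne i'$ the infimum \Cref{eq:dii} equals the distance $d_X(\iota_i x,\iota_{i'}x')$ by \Cref{th:largepush} \Cref{item:21}, while for $i=i'$ one uses \Cref{th:largepush} \Cref{item:27} to reduce either to the (finite, hence attained) infimum \Cref{eq:dii} with $i'=i$, or to a minimizing geodesic lying inside $X_i$ — which exists since $X_i$ is convex, and whose image in $X$ is again a metric segment because $\iota_i$ is non-expansive and its length equals $d_X(\iota_i x,\iota_i x')$.

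So fix $x\in X_i$ and $x'\in X_{i'}$ and write $f\colon Y\times Y\to[0,\infty]$ for the expression inside the infimum \Cref{eq:dii},
\[
  f(y,y') \;=\; d_{X_i}(x,j_i y)\;+\;d_X(\iota y,\iota y')\;+\;d_{X_{i'}}(j_{i'}y',x').
\]
The key point is that $f$ is finite exactly on a clopen subset of $Y\times Y$, on which it is continuous. For the first and third summands this is because $j_i,j_{i'}$ are non-expansive (hence continuous) and $d_{X_i}(x,-),d_{X_{i'}}(x',-)$ are $1$-Lipschitz, and because finite-distance components of a metric space are clopen, so their preimages under $j_i$, $j_{i'}$ are clopen in $Y$. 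For the middle summand I invoke \Cref{th:largepush} \Cref{item:24}, which gives $d_X(\iota y,\iota y')=\delta(y,y')$ for $\delta(y,y'):=\inf_k d_{X_k}(j_k y,j_k y')$; the proof of that part records that $\delta$ is bi-Lipschitz to $d_Y$, hence continuous and finite precisely on the clopen locus where $d_Y$ is finite. Therefore $S:=f^{-1}\!\bigl([0,\infty)\bigr)$ is clopen in $Y\times Y$, hence compact, and $f|_S$ is a continuous real-valued function on a compact space, so it attains its minimum; when the infimum \Cref{eq:dii} is finite, $S$ is nonempty and that minimum equals the infimum, which is thus attained.

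With the infima \Cref{eq:dii} attained whenever finite, all hypotheses of \Cref{pr:gluecc} are in force and we conclude that $X=\coprod_{Y,i}X_i$ is convex. The only point requiring care is the bookkeeping around infinite distances in the middle paragraph — checking that the finiteness loci are genuinely clopen, so that compactness of $Y$ can be brought to bear — but this is routine given \Cref{th:largepush}, and the rest of the convexity argument is already carried out in \Cref{pr:gluecc}.
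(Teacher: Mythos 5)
Your proof is correct and follows the same route as the paper, which simply asserts in one sentence that compactness of $Y$ forces the attainment of the infima \Cref{eq:dii} and then invokes \Cref{pr:gluecc}. You have supplied the details the paper leaves implicit (the clopen finiteness locus, continuity of the three summands via the bi-Lipschitz comparison of $\delta$ with $d_Y$, and the observation that only finite infima matter), and these details check out.
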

\begin{proof}
  The compactness of $Y$ automatically implies the second condition in the statement of \Cref{pr:gluecc}, hence the conclusion by applying that earlier result.
\end{proof}

The discussion thus far will help produce an example that shows \cat{CCMet} not to be closed, answering a question posed in \cite[discussion following Remark 5.9]{dr}.

\begin{example}\label{ex:ccmetnclosed}
  Consider a closed connected {\it Riemannian manifold} $X_0$ equipped with its intrinsic metric $d_0$ \cite[\S 7.2, Definition 2.4]{crm}, together with
  \begin{itemize}
  \item a {\it closed geodesic} $\gamma_0:[0,1]\to X_0$ representing a non-trivial element of the fundamental group. {\it Any} non-trivial free homotopy loop class contains a close geodesic, by a classical theorem of Cartan \cite[\S 12.2, Theorem 2.2]{crm}.
  \item and a point $p_0\in X_0\setminus \gamma$ in the same connected component as $\gamma_0$. 
  \end{itemize}

  We can form an abstract object $(Y,d_Y)$ of \cat{CMet} consisting of a closed length-1 geodesic and a point having infinite distance to all points of the geodesic, and embed it into $(X_0,d_0)$ in the obvious fashion: by identifying the closed geodesic with $\gamma$ isometrically and the isolated point with $p_0$.
  
  Next, we also embed $(Y,d_Y)$ into closed Riemannian manifolds $(X_{\varepsilon},d_{\varepsilon})$, so that
  \begin{itemize}
  \item the closed geodesic in $X$ is again identified isometrically with a closed length-1 geodesic $\gamma_{\varepsilon}$ in $X_{\varepsilon}$;
  \item while the isolated point of $X$ maps to a point $p_{\varepsilon}\in X_{\varepsilon}$, admitting a unique minimal-length geodesic connecting it to every in $\gamma_{\varepsilon}$, with that length equal to $L+\varepsilon$ for some large $L$:
    \begin{equation}\label{eq:lbig}
      L >
      \max\{d_0(p,\gamma(t))\ |\ t\in [0,1]\}. 
    \end{equation}
  \end{itemize}
  This can be arranged by altering the usual Riemannian metric on a sphere, with $\gamma_{\varepsilon}$ being an equator and $p_{\varepsilon}$ a pole.

  We now have the full package for gluing: embeddings $j_i:(Y,d_Y)\to (X_i,d_i)$ for $i\in \bR_{\ge 0}$. That the glued space
  \begin{equation*}
    (X,d_X)
    :=
    \coprod_Y (X_i,d_i)
  \end{equation*}
  is complete convex (i.e. an object of \cat{CCMet}) now follows from \Cref{th:largepush} \Cref{item:23} (completeness) and \Cref{cor:gluecc} (convexity): closed connected Riemannian manifolds are convex \cite[\S 7.2, Corollary 2.7]{crm}, $Y$ is compact, and the infimum \Cref{eq:precdx} is achieved by the embedding into $X_0$ by \Cref{eq:lbig}.

  The now is that the internal hom $[\bS^1,X]_{\cat{CCMet}}$ does not exist, where $\bS^1$ is the unit-length unit circle. Indeed, consider the embedding $\iota:\bS^1\to X$ identifying the circle with the closed geodesic to which all $\gamma_i\subset X_i$ collapse. That embedding can be homotoped to the constant map
  \begin{equation*}
    \mathrm{ct}:\bS^1\to p_i\in X_i\to X\quad (\text{any }i\in \bR_{\ge 0})
  \end{equation*}
  along $X_\varepsilon\subset X$, $\varepsilon>0$ by paths of respective length $L+\varepsilon$ (but no shorter), whereas in $X_0$ the closed geodesic is by assumption not homotopic to a constant map. It follows that the distance between
  \begin{equation*}
    \iota:\bS^1\to X\quad\text{and}\quad\mathrm{ct}:\bS^1\to X
  \end{equation*}
  in $[\bS^1,X]_{\cat{CMet}}$ is $L$, but that infimum is not achievable by a path of length $L$. By \Cref{cor:ihomchar}, the internal hom in \cat{CCMet} (rather than \cat{CMet}) does not exist.
\end{example}

\subsection{Limits along trees and automatic completeness}\label{subse:limtr}

One handy procedure for producing metric spaces (employed below, a number of times) is to repeatedly glue metric segments of various lengths to the same initial space. This is a (perhaps infinite) iteration of the pushout construction discussed in \Cref{subse:glue}, so results that ensure the automatic completeness of such glued spaces will be useful. Such results are the focus of the present subsection.

It will be convenient to consider colimits along {\it graphs} rather than categories; the former can be turned into the latter via the {\it free category} construction of \cite[\S 1.7]{awo} or \cite[\S II.7]{mcl}. Per those sources:

\begin{definition}
  An {\it oriented graph} (or just plain `graph', unless specified otherwise) is a quadruple
  \begin{equation*}
    (E,V,\partial_i) = (E,V,\partial_0,\partial_1)
  \end{equation*}
  consisting of
  \begin{itemize}
  \item a set $E$ of {\it edges};
  \item a set $V$ of {\it vertices};
  \item and {\it source} and {\it target} maps $\partial_0:E\to V$ and $\partial_1:E\to V$ respectively.
  \end{itemize}
  We also refer to the source and target of $e\in E$ as the {\it extremities} or {\it vertices of} $e$.
\end{definition}
The reader should picture edges as
\begin{equation*}
  \begin{tikzpicture}[auto,baseline=(current  bounding  box.center)]
    \path[anchor=base] 
    (0,0) node (l) {$\partial_1(e)$}
    +(2,0) node (r) {$\partial_0(e)$.}
    ;
    \draw[->] (r) to[bend left=0] node[pos=.5,auto,swap] {$\scriptstyle e$} (l);
  \end{tikzpicture}
\end{equation*}
The usual graph-theoretic language (paths, cycles, etc.) applies, with the caveat that much of the combinatorial literature on graphs tends to assume an edge is uniquely determined by its source and target: see e.g. \cite[\S\S I.1 and I.2]{boll-grph} or \cite[\S\S 1.1-1.5]{diest-grph}.

\begin{definition}
  Let $\Gamma:=(E,V,\partial_i)$ be a graph.
  \begin{itemize}
  \item An {\it oriented path} (or just `path') {\it of length $n$} is a sequence $(e_i)_{i=1}^n$ of edges with $\partial_1(e_i)=\partial_0(e_{i+1})$ for all $1\le i\le n-1$. Pictorially:
    \begin{equation*}
      \begin{tikzpicture}[auto,baseline=(current  bounding  box.center)]
        \path[anchor=base] 
        (0,0) node (1) {$\bullet$}
        +(2,0) node (2) {$\bullet$}
        +(3,0) node (3) {$\cdots$}
        +(4,0) node (4) {$\bullet$}
        +(6,0) node (5) {$\bullet$}
        +(8,0) node (6) {$\bullet$}
        ;
        \draw[->] (6) to[bend left=0] node[pos=.5,auto,swap] {$\scriptstyle e_1$} (5);
        \draw[->] (5) to[bend left=0] node[pos=.5,auto,swap] {$\scriptstyle e_2$} (4);
        \draw[->] (2) to[bend left=0] node[pos=.5,auto,swap] {$\scriptstyle e_n$} (1);
      \end{tikzpicture}
    \end{equation*}
    We will also occasionally refer to the {\it empty path} based at a vertex $v\in V$, consisting of no edges at all (and hence of length 0). 
  \item An {\it oriented cycle} (or just `cycle') is a path $(e_i)_{i=1}^n$ such that $\partial_1(e_n)=\partial_0(e_1)$ (i.e. the target of the last arrow is the source of the first; the path returns to its origin).
  \item {\it Unoriented} paths and cycles are defined as their oriented cousins, except that for each $i$ only the weaker requirement
    \begin{equation*}
      \partial_1(e_i)=\partial_0(e_{i+1})
      \quad\text{or}\quad
      \partial_1(e_{i+1})=\partial_0(e_{i})
    \end{equation*}
    is made:
    \begin{equation*}
      \begin{tikzpicture}[auto,baseline=(current  bounding  box.center)]
        \path[anchor=base] 
        (0,0) node (1) {$\cdots$}
        +(1,0) node (2) {$\bullet$}
        +(3,0) node (3) {$\bullet$}
        +(5,0) node (4) {$\bullet$}
        +(7,0) node (5) {$\bullet$}        
        ;        
        \draw[->] (5) to[bend left=0] node[pos=.5,auto,swap] {$\scriptstyle e_1$} (4);
        \draw[<-] (4) to[bend left=0] node[pos=.5,auto,swap] {$\scriptstyle e_2$} (3);
        \draw[->] (3) to[bend left=0] node[pos=.5,auto,swap] {$\scriptstyle e_3$} (2);
      \end{tikzpicture}
    \end{equation*}
    is an unoriented path, for instance.
  \item $\Gamma$ is {\it connected} if every two vertices are extremities of edges belonging to a common unoriented path.
  \item $\Gamma$ is a {\it forest} if it has no unoriented cycles;
  \item and a {\it tree} if it is in addition connected.
  \end{itemize}
\end{definition}

Recall from \cite[\S 1.7]{awo} (or \cite[\S II.7, Theorem 1]{mcl}):

\begin{definition}
  The {\it free category} $\cC(\Gamma)$ on a graph $\Gamma=(E,V,\partial_i)$ has
  \begin{itemize}
  \item $V$ as its set of vertices;
  \item the (oriented) paths of $\Gamma$ as morphisms;
  \item the empty paths as identity morphisms;
  \item and concatenation of paths as composition.
  \end{itemize}
  Via this construction, we transport terminology involving categories to graphs. Thus, {\it $\Gamma$-functors} are functors defined on $\cC(\Gamma)$, {\it $\Gamma$-colimits} are colimits of such functors, etc.
\end{definition}

In order to state \Cref{th:tree} recall also (\cite[discussion preceding Proposition 1.3.2]{diest-grph})

\begin{definition}\label{def:diam}  
  The {\it distance} between two vertices $x,y\in V$ of a graph $(E,V,\partial_i)$ is the length of a shortest unoriented path containing edges having $x$ and $y$ as endpoints; it is zero if $x=y$ and infinite if no such paths exist.

  The {\it diameter} of a graph is the supremum of the distances between pairs of vertices.
\end{definition}

\begin{remark}
  It is easy to see that for a tree the diameter is also the supremum of the lengths of unoriented paths; this is implicit in the proof of \Cref{th:tree}.
\end{remark}

\begin{theorem}\label{th:tree}
  Let $1\ge C>0$, $\Gamma=(E,V,\partial_i)$ a finite-diameter forest, and
  \begin{equation*}
    V\ni v\xmapsto{\quad F\quad} (X_v,d_v)\in \cat{CMet}
  \end{equation*}
  a $\Gamma$-functor.
  \begin{enumerate}[(a)]
  \item\label{item:29} There is a non-negative integer $N\in \bZ_{>0}$, depending only on the diameter $\mathrm{diam}(\Gamma)$, such that the canonical morphisms from $X_v$ to the colimit
    \begin{equation*}
      (X,d):=\varinjlim F\in \cat{Met}
    \end{equation*}
    in \cat{Met} is $C^N$-expansive.
  \item\label{item:30} That colimit is automatically complete, and hence also a colimit in \cat{CMet}.
  \end{enumerate}
\end{theorem}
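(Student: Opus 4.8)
The plan is to bootstrap \Cref{th:largepush}, which handles one (multiple) pushout, up to a tree-shaped colimit by peeling off the tree one ``layer'' at a time from a chosen root, using the hypothesis that the edge maps $F(e)$ are $C$-expansive (i.e.\ $(C,1)$-bi-Lipschitz). First I would reduce to a single tree: a forest is the disjoint union of its connected components $\Gamma_\alpha$, each of diameter $\le D$; since $\cC(\Gamma)$ is the coproduct of the $\cC(\Gamma_\alpha)$, the colimit $\varinjlim F$ in \cat{Met} is the coproduct (disjoint union with infinite cross-distances) of the $\varinjlim F|_{\Gamma_\alpha}$. A coproduct of complete spaces is complete, and the canonical maps into it are isometric restrictions of the canonical maps into the components, so it suffices to treat a single tree $\Gamma$ of finite diameter $D$. (Under the convention that a disconnected graph has infinite diameter, this step is vacuous.)

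\textbf{Layering and the pushout description.} Fix a root $r\in V$ and stratify $V=V_0\sqcup\cdots\sqcup V_k$ by distance from $r$, so $k\le D$. Let $T_j\subseteq\Gamma$ be the full subgraph on $V_0\cup\cdots\cup V_j$. Using that in a tree the unique path from $r$ to a vertex of $V_i$ runs through $V_0,\dots,V_i$ in order, one checks that each $T_j$ is a subtree, that $\{r\}=T_0\subseteq T_1\subseteq\cdots\subseteq T_k=\Gamma$, and that $T_j$ arises from $T_{j-1}$ by adjoining the vertices $v\in V_j$ and, for each such $v$, the single edge $e_v$ joining $v$ to its parent $p(v)\in V_{j-1}$. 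Put $Z_j:=\varinjlim_{T_j}F\in\cat{Met}$. The crucial point is to express $Z_j$ in terms of $Z_{j-1}$. Split $V_j=V_j^{\mathrm{out}}\sqcup V_j^{\mathrm{in}}$ according to whether $e_v$ points away from $r$ (so $F(e_v):X_{p(v)}\to X_v$) or towards $r$ (so $F(e_v):X_v\to X_{p(v)}$). Computing $\varinjlim_{T_j}F$ as the colimit over $T_{j-1}$ followed by the layer-$j$ attaching data, the ``in'' children do not alter the colimit object at all — the cocone condition forces the component $X_v\to W$ to equal (component $X_{p(v)}\to W)\circ F(e_v)$ — whereas the ``out'' children exhibit $Z_j$ as the \cat{Met}-pushout of
\begin{equation*}
  Z_{j-1}\ \longleftarrow\ Y_j:=\coprod_{v\in V_j^{\mathrm{out}}}X_{p(v)}\ \longrightarrow\ \coprod_{v\in V_j^{\mathrm{out}}}X_v,
\end{equation*}
where on the $v$-summand the left leg is the canonical map $\iota_{p(v)}$ and the right leg is $F(e_v)$. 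This is precisely a (multiple) pushout of the form governed by \Cref{th:largepush}.

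\textbf{Induction on $j$.} I claim $Z_j$ is complete and every canonical map $X_v\to Z_j$ with $v$ in a layer $\le j$ is $C^{N_j}$-expansive, for integers $N_j$ defined recursively (say $N_0=0$, $N_j=2N_{j-1}+2$), hence depending only on $j$. The case $j=0$ is trivial ($Z_0=X_r$, map the identity, which is $C$-expansive since $C\le 1$). For the step: the legs of the span above are $C^{N_{j-1}}$-expansive (left, by the inductive bound on canonical maps into $Z_{j-1}$) and $C$-expansive (right, as $F(e_v)$ is), hence both $C^{\max(N_{j-1},1)}$-expansive; all four spaces are complete, and $0<C^{\max(N_{j-1},1)}\le1$, so \Cref{th:largepush} applies and yields that $Z_j$ is complete and that $Z_{j-1}\to Z_j$ and each $X_v\to Z_j$ (for $v\in V_j^{\mathrm{out}}$, via the isometric inclusion into the coproduct) are $C^{\max(N_{j-1},1)}$-expansive. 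Composing with the inductive bounds — and, for $v\in V_j^{\mathrm{in}}$, using that $X_v\to Z_j$ factors as $X_v\xrightarrow{F(e_v)}X_{p(v)}\to Z_j$ with $F(e_v)$ being $C$-expansive — gives the claim for all of layer $\le j$ with $N_j$ as chosen. Taking $j=k$: $X:=Z_k=\varinjlim_\Gamma F$ is complete, so (a \cat{Met}-colimit of a diagram in \cat{CMet} being the completion of the \cat{CMet}-colimit, and $X$ being already complete) $X$ is also the colimit in \cat{CMet}; and the canonical maps are $C^N$-expansive with $N:=N_k$ depending only on $k\le D$. This gives parts (a) and (b).

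\textbf{Main obstacle.} The delicate step is the pushout description of $Z_j$: correctly recognizing it as a single pushout to which \Cref{th:largepush} applies despite edges pointing either way, and in particular verifying that ``in''-oriented children contribute nothing to the colimit object (only a canonical map that factors through $X_{p(v)}$), so that each stage is genuinely one invocation of the earlier theorem. Once that bookkeeping is in place, both the completeness and the (harmless) geometric growth of the expansivity exponents follow formally from \Cref{th:largepush}.
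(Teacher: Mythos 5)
Your overall strategy is the same as the paper's: reduce to a single finite-diameter tree, peel the tree apart layer by layer, and feed each layer to \Cref{th:largepush}. The paper organizes the peeling by inducting on the diameter (first pruning leaf \emph{sources}, which --- exactly as you observe for your ``in''-oriented children --- do not change the colimit, and then detaching the remaining sink leaves in one go), whereas you fix a root and stratify by distance from it; these are equivalent bookkeeping schemes, and your treatment of the reduction to trees, of the in-oriented edges, and of the recursion for $N_j$ is fine.

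The one step that does not work as written is the invocation of \Cref{th:largepush} on the span $Z_{j-1}\leftarrow Y_j\to\coprod_v X_v$ with $Y_j=\coprod_{v\in V_j^{\mathrm{out}}}X_{p(v)}$. The left leg is \emph{not} $C'$-expansive for any $C'>0$: two points lying in different summands $X_{p(v)}$ and $X_{p(v')}$ of the coproduct $Y_j$ are at infinite distance there, but their images $\iota_{p(v)}(y)$ and $\iota_{p(v')}(y')$ in $Z_{j-1}$ are in general a finite distance apart (indeed equal, if $p(v)=p(v')$ and $y=y'$). Expansivity of a map out of a coproduct is strictly stronger than expansivity on each summand, so the inductive bound on the individual $\iota_{p(v)}$ does not deliver what you claim, and \Cref{th:largepush} does not apply to this span as stated. (The paper's own write-up is comparably terse at the corresponding point, where the detached leaves likewise have varying parents.) The gap is repairable without changing the architecture: for each $v\in V_j^{\mathrm{out}}$ first form the ordinary pushout $P_v$ of $\iota_{p(v)}:X_{p(v)}\to Z_{j-1}$ and $F(e_v):X_{p(v)}\to X_v$ --- a genuine common-source family of expansive maps, so \Cref{th:largepush} applies and makes $Z_{j-1}\to P_v$ and $X_v\to P_v$ expansive with controlled constant --- and then realize $Z_j$ as the multiple pushout of the family $\{Z_{j-1}\to P_v\}_v$, which now does have the single common source $Z_{j-1}$ that \Cref{th:largepush} requires. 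Two applications per layer only doubles the exponent, so $N$ remains a function of the diameter alone and the rest of your induction goes through unchanged.
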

\begin{proof}
  We proceed by induction on the diameter $\mathrm{diam}(\Gamma)$. When it is zero the graph consists of only isolated vertices and the colimit is a coproduct; there is thus nothing to prove ($N=0$ will do in that case). A diameter-1 forest is a disjoint union of edges $e$, so a functor thereon is a collection of contractions
  \begin{equation*}
    (X_e,d_{X,e})\longleftarrow (Y_e,d_{Y,e}).
  \end{equation*}
  The colimit is the coproduct of the $X_e$; we are once more done, taking $N=1$.

  We now take for granted the claim for smaller diameters, assuming $\mathrm{diam}(\Gamma)\ge 2$. It is also harmless to work with a {\it tree} $\Gamma$, since a colimit over a forest will be a coproduct of colimits over its connected components (i.e. constituent trees). 

  \begin{enumerate}[(1)]

  \item {\bf Pruning leaf sources.} A {\it leaf} \cite[\S 1.5]{diest-grph} is a vertex attached to a single edge, and a {\it source} is a vertex that is not the target of any edges. Consider the tree $\Gamma'\subseteq \Gamma$ obtained by deleting the respective edges $e_v$ adjacent to leaf sources $v$:
    \begin{equation*}
      \begin{tikzpicture}[>=Stealth,auto,baseline=(current  bounding  box.center)]
        \path[anchor=base] 
        (0,0) node[circle,draw] (g) {$\Gamma'$}
        +(1,1) node (v) {$v$}
        +(2,-.5) node (v') {$v'$}
        +(-2,1) node (v'') {$v''$}
        ;
        \draw[<-] (g) to[bend left=6] node[pos=.5,auto] {$\scriptstyle e_v$} (v);
        \draw[<-] (g) to[bend left=6] node[pos=.5,auto,swap] {$\scriptstyle e_{v'}$} (v');
        \draw[<-] (g) to[bend left=6] node[pos=.5,auto] {$\scriptstyle e_{v''}$} (v'');
      \end{tikzpicture}
    \end{equation*}
    We have an isomorphism
    \begin{equation*}
      \varinjlim(F|_{\Gamma'})\cong \varinjlim F,
    \end{equation*}
    so we can always (as suggested by the language used above) prune such edges $e_v$. Should the deletion of $e_v$ expose a new leaf source
    \begin{equation*}
      \bullet\xleftarrow{\quad e_w\quad} w
    \end{equation*}
    (that used to be the target of the now-absent $e_v$), $e_v$ and $e_w$ are composable to a path. This observation replicates, and since there is a bound of $\mathrm{diam}(\Gamma)$ on the length of an unoriented path the diameter also caps the number of times this pruning procedure can be repeated.

    In conclusion, we can assume that all leaves are {\it sinks} (i.e. targets only) without affecting the desired conclusions.
    
  \item {\bf All leaves are sinks.} Because every path of maximal length must contain (an edge adjacent to) a leaf, $\Gamma$ consists of a tree $\Gamma'$ of strictly smaller diameter connected to the leaves $v$ via their unique respective incident edges $e_v$:
    \begin{equation*}
      \begin{tikzpicture}[>=Stealth,auto,baseline=(current  bounding  box.center)]
        \path[anchor=base] 
        (0,0) node[circle,draw] (g) {$\Gamma'$}
        +(2,.5) node (v) {$v$}
        +(2,-.5) node (v') {$v'$}
        +(-2,-1) node (v'') {$v''$}
        ;
        \draw[->] (g) to[bend left=6] node[pos=.5,auto] {$\scriptstyle e_v$} (v);
        \draw[->] (g) to[bend left=6] node[pos=.5,auto,swap] {$\scriptstyle e_{v'}$} (v');
        \draw[->] (g) to[bend left=6] node[pos=.5,auto] {$\scriptstyle e_{v''}$} (v'');
      \end{tikzpicture}
    \end{equation*}
    We have the induction hypothesis for $\Gamma'$, and $(X,d)$ can be obtained as the pushout of the morphisms
    \begin{equation*}
      \varinjlim\left(F|_{\Gamma'}\right)\xrightarrow{\quad F(e_v) \quad} (X_v,d_v),\quad\text{leaves }v.
    \end{equation*}
    The conclusion follows from \Cref{th:largepush}. 
  \end{enumerate}
  This concludes the argument. 
\end{proof}

\Cref{th:tree} can presumably be generalized in a number of ways, but not too cavalierly.

\begin{examples}
  \begin{enumerate}[(1)]
  \item\label{item:31} Dropping the finite-diameter condition, even for tree colimits of isometries (i.e. $C=1$ in \Cref{th:tree}). Consider the graph
    \begin{equation*}
      \begin{tikzpicture}[>=Stealth,auto,baseline=(current  bounding  box.center)]
        \path[anchor=base] 
        (0,0) node (1) {$\bullet$}
        +(1,-.5) node (d1) {$\bullet$}
        +(2,0) node (2) {$\bullet$}
        +(3,-.5) node (d2) {$\bullet$}
        +(4,0) node (3) {$\bullet$}
        +(5,0) node () {$\cdots$}
        +(6,0) node (4) {$\bullet$}
        +(7,-.5) node (d3) {$\bullet$}
        +(8,0) node (5) {$\bullet$}
        +(9,0) node () {$\cdots$}
        ;
        \draw[->] (d1) to[bend left=10] node[pos=.5,auto] {$\scriptstyle $} (1);
        \draw[->] (d1) to[bend right=10] node[pos=.5,auto] {$\scriptstyle $} (2);
        \draw[->] (d2) to[bend left=10] node[pos=.5,auto] {$\scriptstyle $} (2);
        \draw[->] (d2) to[bend right=10] node[pos=.5,auto] {$\scriptstyle $} (3);
        \draw[->] (d3) to[bend left=10] node[pos=.5,auto] {$\scriptstyle $} (4);
        \draw[->] (d3) to[bend right=10] node[pos=.5,auto] {$\scriptstyle $} (5);
      \end{tikzpicture}
    \end{equation*}
    The top row is assigned closed segments of respective lengths $\frac 1{2^n}$, $n\ge 1$ decreasing rightward, the bottom row is assigned points (i.e. singletons, regarded as objects of \cat{CMet}), and the arrows are identifications with endpoints, splicing together the segments.

    The colimit in \cat{Met} is then a length-1 half-open segment, hence not complete.

  \item\label{item:32} Even finite graphs will not do, for either of the claims in \Cref{th:tree}, if loops are allowed (even if $C=1$, i.e. the connecting morphisms are isometries). Consider for instance a diagram in \cat{CMet} of the form
    \begin{equation}\label{eq:loop}
      \begin{tikzpicture}[auto,baseline=(current  bounding  box.center)]
        \path[anchor=base] 
        (0,0) node (1) {$(X,d)$}
        ;
        \draw[->] (1) .. controls (2,-1) and (2,1) ..
        node[pos=.5,auto,swap] {$\scriptstyle \varphi$} (1);
      \end{tikzpicture}
    \end{equation}
    as we now describe.
    \begin{itemize}
    \item $(X,d)$ is the portion of the first quadrant above a hyperbola:
      \begin{equation*}
        (X,d):=\{(x,y)\in \bR_{\ge 0}^2\ |\ xy\ge 1\},
      \end{equation*}
      with the usual Euclidean distance.
    \item the isometry $\varphi$ shifts everything up by 1:
      \begin{equation*}
        \varphi(x,y) := (x,y+1). 
      \end{equation*}
    \end{itemize}
    The colimit in \cat{Met} is the space of orbits
    \begin{equation}\label{eq:phiorb}
      O_x:=\{\varphi^n x\ |\ n\in \bZ_{\ge 0}\},\ x\in X
    \end{equation}
    of $\varphi$ with the usual set-distance induced by $d$:
    \begin{equation*}
      d(O_x,O_y) = \inf\{d(p,q)\ |\ p\in O_x,\ q\in O_y\}.
    \end{equation*}
    The (images of the) points $\left(\frac 1{2^n},\ 2^n\right)$, $n\in \bZ_{\ge 0}$ form a Cauchy sequence: the successive distances between their $\varphi$-orbits are, respectively, $\frac 1{2^n}$.

    Clearly though, that sequence has no limit in the quotient.    
        
  \item\label{item:33} The same effect can easily be replicated with finite graphs without single-edge loops:
    \begin{equation*}
      \begin{tikzpicture}[auto,baseline=(current  bounding  box.center)]
        \path[anchor=base] 
        (0,0) node (l) {$X$}
        +(2,0) node (r) {$X$}
        ;
        \draw[->] (l) to[bend left=16] node[pos=.5,auto] {$\scriptstyle \varphi$} (r);
        \draw[->] (r) to[bend left=16] node[pos=.5,auto] {$\scriptstyle \id$} (l);        
      \end{tikzpicture}
    \end{equation*}
    or
    \begin{equation*}
      \begin{tikzpicture}[auto,baseline=(current  bounding  box.center)]
        \path[anchor=base] 
        (0,0) node (l) {$X$}
        +(2,.5) node (u) {$X$}
        +(2,-.5) node (d) {$X$}
        +(4,0) node (r) {$X$}
        ;
        \draw[->] (l) to[bend left=6] node[pos=.5,auto] {$\scriptstyle \id$} (u);
        \draw[->] (u) to[bend left=6] node[pos=.5,auto] {$\scriptstyle \varphi$} (r);
        \draw[->] (l) to[bend right=6] node[pos=.5,auto,swap] {$\scriptstyle \id$} (d);
        \draw[->] (d) to[bend right=6] node[pos=.5,auto,swap] {$\scriptstyle \id$} (r);
      \end{tikzpicture}
    \end{equation*}
    say.

  \item\label{item:34} On the other hand, it was crucial that the isometry $\varphi$ of \Cref{eq:loop} not be bijective. In other words, the colimit in \cat{Met} of a diagram \Cref{eq:loop} with a metric {\it isomorphism} $\varphi$ will automatically be complete if the original space $(X,d)$ is.

    This is follows from the simple remark recorded as \Cref{le:groupoid} below, given that for an isomorphism $\varphi$ the diagram \Cref{eq:loop} can be regarded as a functor defined on a groupoid.
    
  \end{enumerate}
\end{examples}

\begin{lemma}\label{le:groupoid}
  A colimit in $\cat{Met}$ of a functor
  \begin{equation*}
    F:\cG\to \cat{CMet}
  \end{equation*}
  defined on a groupoid is automatically complete, and hence also a colimit in \cat{CMet}.
\end{lemma}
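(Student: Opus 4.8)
The plan is to reduce to a single group acting by isometries on a complete metric space, recognise the colimit as the resulting metric quotient, and then run the telescoping Cauchy-sequence argument already used for pushouts.

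First I would reduce to a connected groupoid, and then to a one-object one. Writing $\cG$ as the coproduct in $\cat{Cat}$ of its connected components $\cG_\alpha$, and using that $\cat{Met}$ is cocomplete and that a colimit over a coproduct of indexing categories is the coproduct of the colimits, we get $\varinjlim F=\coprod_\alpha \varinjlim(F|_{\cG_\alpha})$. A coproduct in $\cat{Met}$ of complete spaces is complete (a Cauchy sequence eventually stays in one summand, since far-out terms are a finite distance apart, hence in the same summand), so it suffices to handle each $\cG_\alpha$. Each is equivalent to the one-object groupoid $BG$ with $G:=\mathrm{Aut}_{\cG}(x_0)$, and colimits are unchanged under precomposition with an equivalence, so we may assume $F\colon BG\to\cat{CMet}$: a complete metric space $(X,d)$ with a $G$-action. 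Here the groupoid hypothesis is essential — each $F(g)$ has inverse $F(g^{-1})$, and a non-expansive bijection with non-expansive inverse is an isometric automorphism — so $G$ acts by isometries.

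Next I would identify the colimit explicitly. By \Cref{def:glue}\Cref{item:14} the colimit in $\cat{Met}$ is $X$ modulo the equivalence relation generated by $x\sim F(g)(x)$, equipped with the quotient semimetric $d_\sim$ (then identifying zero-distance points). For a \emph{group} action the classes of this relation are exactly the orbits, and — using only isometry-invariance of $d$ and the triangle inequality — the $\sim$-chain infimum defining $d_\sim$ collapses to the orbit distance
\[
  \rho(x,y):=\inf_{g\in G}d(x,gy).
\]
Thus the colimit is $\bar X:=X/\{\rho=0\}$ with the metric induced by $\rho$ and the non-expansive quotient map $\pi\colon X\to\bar X$. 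For completeness, take a Cauchy sequence $([x_n])_n$ in $\bar X$; passing to a subsequence, assume $\bar d([x_n],[x_{n+1}])=\rho(x_n,x_{n+1})<2^{-n}$ (a Cauchy sequence with a convergent subsequence converges, \cite[proof of Lemma 43.1]{mnk}). Choose representatives recursively: $\tilde x_1:=x_1$, and given $\tilde x_n$ with $[\tilde x_n]=[x_n]$, pick $g_n\in G$ with $d(\tilde x_n,g_n x_{n+1})<2^{-n}$ — possible since this is an infimum equal to $\rho(x_n,x_{n+1})$ — and set $\tilde x_{n+1}:=g_n x_{n+1}$. Then $(\tilde x_n)_n$ is Cauchy in $X$, hence converges to some $x_\infty$ by completeness, and $\bar d([x_n],[x_\infty])\le d(\tilde x_n,x_\infty)\to 0$ since $\pi$ is non-expansive. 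So the subsequence, hence $([x_n])_n$, converges; being a complete colimit in $\cat{Met}$, $\bar X$ is then also the colimit in $\cat{CMet}$, exactly as in \Cref{th:largepush}\Cref{item:23} and the discussion preceding \Cref{eq:pushcmet}.

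The main obstacle is the identification of the quotient metric: one has to check carefully that the distance on the colimit is exactly $\rho$ — in particular that the infimum over $\sim$-chains reduces to an infimum over $G$, and that $\bar d$ is then independent of the chosen representatives — since both the recursive choice of representatives and the final estimate rely on this. Everything after that is the same telescoping argument used repeatedly elsewhere in the paper.
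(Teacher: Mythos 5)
Your proposal is correct and follows essentially the same route as the paper: reduce to connected components, hence to a single group acting by isometries on a complete space, identify the colimit as the orbit space with the induced infimum distance, and conclude by recursively translating representatives so that a Cauchy sequence of orbits lifts to a Cauchy sequence in $X$. Your treatment is slightly more careful on two points the paper glosses over --- replacing a connected groupoid by an \emph{equivalent} one-object groupoid rather than an isomorphic one, and verifying that the $\sim$-chain infimum collapses to the orbit distance $\rho$ --- but these are refinements of the same argument, not a different one.
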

\begin{proof}
  Since the colimit in question is the coproduct of the restrictions $F|_{\cG_i}$ to the connected components of $\cG$, it is enough to assume $\cG$ is connected to begin with. But then it will be isomorphic to a group $\Gamma$, so the colimit in $\cat{Met}$ is the coequalizer of the isometries constituting a group action
  \begin{equation*}
    \Gamma\times (X,d)\to (X,d). 
  \end{equation*}
  Such a coequalizer is obtained by identifying the orbits \Cref{eq:phiorb} to single points and further identifying those that are distance zero apart. If $(O_{x_n})_n$ is a Cauchy sequence in that quotient then we can assume, perhaps after passing to a subsequence, that
    \begin{equation*}
      d(O_{x_{n}},\ O_{x_{n+1}}) < \frac 1{2^n},\ \forall n\ge 1. 
    \end{equation*}
    Having fixed $x_1$, {\it some} $x\in O_{x_2}$ is less than $\frac 12$ from it by assumption, and upon translating $x$ by some $g\in \Gamma$ we may as well assume that $x=x_2$. Similarly, we can assume $d(x_2,x_3)<\frac 1{2^2}$, etc.

    The limit of $(x_n)_n$ in the complete space $(X,d)$ will map to a limit of $(O_{x_n})_n$.
\end{proof}

\begin{remark}
  \Cref{le:groupoid} only gives an analogue of part \Cref{item:30} of \Cref{th:tree}; the corresponding version of \Cref{item:29} does {\it not} hold in general, of course (since the identification of a $\Gamma$-orbit to a single point decreases positive distances to zero).
\end{remark}

\subsection{The convex pseudo-reflection}\label{subse:prfl}

The gluing results above make it very easy to produce convex metric spaces by simply attaching the possibly-missing metric segments.

\begin{definition}\label{def:cvxcompl}
  Let $(X,d)$ be a metric space.
  \begin{itemize}
  \item For points $x,x'\in X$ with $d(x,x')<\infty$ the {\it $(x,x')$-convex completion} $\overline{X}^{x,x'}$ is the space obtained by gluing a length-$d(x,x')$ metric segment with endpoints $x$ and $x'$ to $X$. 
  \item More generally, for a set $\cS$ of finite-distance point-pairs in $X$ the {\it $\cS$-convex completion} $\overline{X}^{\cS}$ is the space obtained by gluing a metric segment as above, for each pair in $\cS$.    
  \item Finally, the {\it convex completion} $\overline{X}^{\cat{cvx}}$ is obtained by gluing one metric segment connecting {\it every} finite-distance pair of distinct points.
  \end{itemize}
\end{definition}

The construction $X\mapsto \overline{X}^{\cat{cvx}}$ is a version of the {\it weak reflection} described in \cite[Remark 6.9]{dr} (hence the title of the present subsection), with some differences. The following result, for instance, makes it clear that in order to produce a convex metric space one need not iterate the construction recursively.

Not only is $\overline{X}^{\cat{cvx}}$, as the name suggests, convex, but we can forego the redundant gluing. 

\begin{proposition}\label{pr:cvxcmpl}
  Let $(X,d)$ be a complete metric space and $\cS\subset X^2$ a set of finite-distance pairs of points containing all pairs which are {\it not} connected in $X$ by a metric segment.

  The $\cS$-convex completion $\overline{X}^{\cS}$ of \Cref{def:cvxcompl} is complete and convex.
\end{proposition}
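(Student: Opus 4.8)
The plan is to realize $\overline{X}^{\cS}$ as a colimit along a finite-diameter forest, so that \Cref{th:tree} yields completeness and a package of isometric-embedding facts for free, and then to read off convexity from the distance formulas of \Cref{th:largepush}.

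First I would write $\overline{X}^{\cS}=\varinjlim F$ for the following $\Gamma$-functor. For each pair $\sigma=(x_{\sigma},x'_{\sigma})\in\cS$ put $\ell_{\sigma}:=d(x_{\sigma},x'_{\sigma})\in(0,\infty)$ and $I_{\sigma}:=[0,\ell_{\sigma}]$. Let $\Gamma$ be the ``spider'' graph with one central vertex carrying $X$ and, for each $\sigma$, two further vertices carrying ${\bf 2}_{\ell_{\sigma}}$ and $I_{\sigma}$, with two edges out of the vertex ${\bf 2}_{\ell_{\sigma}}$ given by the isometric embeddings onto $\{x_{\sigma},x'_{\sigma}\}\subseteq X$ and onto $\{0,\ell_{\sigma}\}\subseteq I_{\sigma}$. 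Then $\Gamma$ is a tree of diameter $\le 4$, every morphism of $F$ is an isometry, and the colimit of $F$ in \cat{Met} is exactly $\overline{X}^{\cS}$ --- the vertices ${\bf 2}_{\ell_{\sigma}}$ serving precisely to glue the endpoints of $I_{\sigma}$ to $x_{\sigma},x'_{\sigma}$ as in \Cref{def:cvxcompl}. Applying \Cref{th:tree} with $C=1$: part \Cref{item:30} gives that $\overline{X}^{\cS}$ is complete, which is one half of the statement; and part \Cref{item:29} gives that each canonical map $X\to\overline{X}^{\cS}$ and $I_{\sigma}\to\overline{X}^{\cS}$ is $1$-expansive, hence --- being non-expansive as well --- an isometric embedding. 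In particular $d_{\overline{X}^{\cS}}$ restricts to $d_{X}$ on the image of $X$, and each glued $I_{\sigma}$ sits in $\overline{X}^{\cS}$ as a metric segment.

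For convexity I would first record the consequence of the hypothesis on $\cS$ that \emph{every $a\neq b$ in the image of $X$ with $d(a,b)<\infty$ is joined in $\overline{X}^{\cS}$ by a metric segment}: if $a,b$ are already joined by one inside $X$, post-compose it with the isometric embedding $X\hookrightarrow\overline{X}^{\cS}$; otherwise $(a,b)\in\cS$ by hypothesis, and the glued interval $I_{(a,b)}$, isometrically embedded and of length $d(a,b)$, is such a segment. Now fix arbitrary $a\neq b$ in $\overline{X}^{\cS}$ with $d(a,b)<\infty$; each of $a,b$ lies in $X$ or in the interior of some glued $I_{\sigma}$. In either ``interior'' case I would present $\overline{X}^{\cS}$ as the pushout in \cat{Met} of the span
\begin{equation*}
  \overline{X}^{\cS\setminus\{\sigma\}}\ \xleftarrow{\ j_1\ }\ {\bf 2}_{\ell_{\sigma}}\ \xrightarrow{\ j_2\ }\ I_{\sigma}
\end{equation*}
with $j_1,j_2$ the embeddings onto $\{x_{\sigma},x'_{\sigma}\}$ and $\{0,\ell_{\sigma}\}$. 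Here $\overline{X}^{\cS\setminus\{\sigma\}}$ is, like $\overline{X}^{\cS}$, a colimit of isometries along a spider tree of diameter $\le 4$, so by \Cref{th:tree} it is complete and $d_{\overline{X}^{\cS\setminus\{\sigma\}}}(x_{\sigma},x'_{\sigma})=d_{X}(x_{\sigma},x'_{\sigma})=\ell_{\sigma}$, which makes $j_1$ (and obviously $j_2$) isometric; \Cref{th:largepush} with $C=1$ then applies to this single pushout, and its parts \Cref{item:24}, \Cref{item:21} and \Cref{item:27} compute $d_{\overline{X}^{\cS}}(a,b)$. Running through the three cases --- both points in $X$; one in $X$ and one interior to some $I_{\sigma}$; both interior to glued intervals (possibly the same one) --- this expresses $d(a,b)$ as a sum $\alpha+d_{X}(p,q)+\beta$ with $p,q\in X$ and $\alpha,\beta\ge 0$ the lengths of the (possibly degenerate) sub-arcs of glued intervals traversed at the two ends. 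Concatenating those sub-arcs with a metric segment from $p$ to $q$ in $\overline{X}^{\cS}$, available by the italicized claim, yields a $1$-Lipschitz curve from $a$ to $b$ defined on an interval of length exactly $d(a,b)$; any such curve is automatically an isometric embedding (immediate from the $1$-Lipschitz condition together with the triangle inequality), hence a metric segment, and its midpoint is a point strictly between $a$ and $b$. This establishes convexity.

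The main obstacle is this second half: realizing $\overline{X}^{\cS}$ as the one-step pushout displayed above (a colimit rearrangement whose legitimacy rests on the isometric-embedding facts of the first paragraph --- these ensure the attaching two-point space carries its correct distance $\ell_{\sigma}$), together with the routine but somewhat fiddly case analysis needed to read $d(a,b)$ off \Cref{th:largepush} while keeping track of which endpoint of each traversed interval is used. Completeness, by contrast, falls out of \Cref{th:tree} at once.
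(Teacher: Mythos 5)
Your proposal is correct and follows essentially the same route as the paper: the same realization of $\overline{X}^{\cS}$ as a colimit of isometries along a diameter-$\le 4$ tree (so that \Cref{th:tree} with $C=1$ gives completeness and the isometric embeddings of $X$ and of each glued segment), and the same use of the distance formulas of \Cref{th:largepush} to reduce an arbitrary finite-distance pair to a concatenation of sub-arcs of glued intervals with a metric segment between points of $X$. Your ``one-step pushout against $\overline{X}^{\cS\setminus\{\sigma\}}$'' device is exactly the paper's remark that one can first enlarge $X$ by one segment and then glue the other, so there is no substantive difference.
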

\begin{proof}
  Note first that every metric-segment gluing is a pushout (ordinary, not multiple) for a pair of arrows
  \begin{equation*}
    {\bf 2}_{\delta}\to \gamma
    \quad\text{and}\quad
    {\bf 2}_{\delta}\to X:
  \end{equation*}
  the identification with the endpoints of a segment $\gamma$ of length $\delta:=d(x,x')$ and with the two points $x,x'\in X$. These embeddings are both isometries, so we can apply the above discussion on $C$-expansive maps with $C=1$. This implies in particular, by \Cref{th:largepush} \Cref{item:22}, that in every such gluing the original space (to which the segment is being glued) embeds isometrically into the glued space; we use this implicitly and repeatedly.
  
  By construction, $\overline{X}^{\cS}$ is the directed union of the partially-``convexified'' $\overline{X}^{\cF}$ for finite sets $\cF\subseteq \cS$ of point-pairs in $X$. We now turn to the claims.
  
  {\bf (1): Completeness.} This is a consequence of \Cref{th:tree} \Cref{item:30}: $\overline{X}^{\cS}$ is the colimit in \cat{Met} along the tree obtained by gluing the various sub-trees
  \begin{equation*}
    \begin{tikzpicture}[auto,baseline=(current  bounding  box.center)]
      \path[anchor=base] 
      (0,0) node (l) {$X$}
      +(2,-.5) node (d) {${\bf 2}_{\delta}$}
      +(4,0) node (r) {$\gamma$}
      ;
      \draw[->] (d) to[bend left=6] node[pos=.5,auto] {$\scriptstyle $} (l);
      \draw[->] (d) to[bend right=6] node[pos=.5,auto] {$\scriptstyle $} (r);      
    \end{tikzpicture}
  \end{equation*}
  along the common vertex $X$. This produces a tree of diameter $\le 4$, so \Cref{th:tree} applies with $C=1$.
 
  {\bf (2): Convexity.} As noted, $X$ itself embeds into $\overline{X}^{\cS}$ isometrically. It follows that, by construction, any two points therein are (if a finite distance apart) the endpoints of a metric segment.

  Points on the same glued metric segment $\gamma$ are connected by a portion of $\gamma$ itself (which embeds isometrically into $\overline{X}^{\cS}$ by \Cref{th:largepush} \Cref{item:22} with $C=1$). 
  
  On the other hand, for a point $x\in X$ and one $y\in \gamma$ on one of the glued segments, $d(x,y)<\infty$ implies that the distances from $x$ to the endpoints $p$ and $q$ of $\gamma$ are also finite. Furthermore, \Cref{th:largepush} \Cref{item:21} shows that
  \begin{equation*}
    d(x,y) = \min\left(d(x,p)+d(p,y),\ d(x,q)+d(q,y)\right).
  \end{equation*}
  In either case we have the desired metric segments in $\overline{X}^{\cS}$: for $p$, say, there is one connecting $x$ to $p$ (either originally in $X$ or attached upon constructing $\overline{X}^{\cS}$) and one connecting $p$ to $y\in \gamma$ (a portion of $\gamma$ itself).

  Finally, for points on distinct glued segments $\gamma\ne \gamma'$ we can fall back on the preceding argument by first enlarging $X$ with the addition of one segment, and then further gluing the other.
\end{proof}

In particular, taking for $\cS$ the set of all finite-distance point-pairs, we have

\begin{corollary}\label{cor:cvxcmpl}
  For any complete metric space $(X,d)$ the convex completion $\overline{X}^{\cat{cvx}}$ of \Cref{def:cvxcompl} is convex and complete.  \qedhere
\end{corollary}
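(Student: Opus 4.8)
The plan is to obtain this as the special case of \Cref{pr:cvxcmpl} in which $\cS$ is taken to be the set of all pairs of distinct points of $X$ lying at finite distance. First I would record that, by \Cref{def:cvxcompl}, the convex completion $\overline{X}^{\cat{cvx}}$ is by construction nothing but the $\cS$-convex completion $\overline{X}^{\cS}$ for this maximal choice of $\cS$: both are obtained by gluing, once, a length-$d(x,x')$ metric segment with endpoints $x$ and $x'$ for every such pair. So the two constructions agree on the nose, with no limiting or recursive process involved.

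Second, I would check that this $\cS$ satisfies the hypothesis of \Cref{pr:cvxcmpl}, which asks only that $\cS$ contain every finite-distance pair that is \emph{not} already joined in $X$ by a metric segment. Since our $\cS$ contains \emph{all} finite-distance pairs of distinct points, this is immediate, and \Cref{pr:cvxcmpl} then yields at once that $\overline{X}^{\cat{cvx}}=\overline{X}^{\cS}$ is complete and convex, which is the assertion. There is no genuine obstacle here: all the substantive work --- verifying completeness via the tree-colimit statement \Cref{th:tree} \Cref{item:30} and convexity via the distance formula of \Cref{th:largepush} \Cref{item:21} --- has already been carried out in the proof of \Cref{pr:cvxcmpl}, and the corollary is merely its instantiation at the largest admissible $\cS$.
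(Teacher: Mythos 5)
Your proposal is correct and is exactly the paper's argument: the corollary is stated as the instance of \Cref{pr:cvxcmpl} obtained by "taking for $\cS$ the set of all finite-distance point-pairs," for which the hypothesis of that proposition holds trivially. Nothing further is needed.
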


\section{Intrinsic spaces and the path comonad}\label{se:pcomon}

\Cref{ex:ccmetnclosed} highlights one reason why \cat{CCMet} is inadequate as an enriching category: lack of closure. As seen in \Cref{re:cvxchar}, convexity essentially means being strictly intrinsic; if we compromise on the stricture, coreflections are much easier to come by. Denoting by $\cat{CPMet}$ the category of complete path metric spaces (in the sense of \Cref{def:lspace}), we have the following version of \Cref{pr:whencorefl};

\begin{proposition}\label{pr:lengthcorefl}
  For a complete metric space $(X,d_X)\in \cat{CMet}$ the associated length metric space $(X,d_{X,\ell})$, equipped with the identity contraction
  \begin{equation}\label{eq:coreflp}
    \id: (X,d_{X,\ell})\to (X,d_{X}),
  \end{equation}
  is a coreflection of $(X,d_X)$ in \cat{CPMet}.
\end{proposition}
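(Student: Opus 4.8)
The plan is to verify the universal property of a coreflection directly: for every complete path metric space $(Z,d_Z)\in\cat{CPMet}$ and every contraction $f:(Z,d_Z)\to(X,d_X)$, I must show that $f$ factors uniquely through the identity map \eqref{eq:coreflp}, i.e. that $f$ is still $1$-Lipschitz when regarded as a map $(Z,d_Z)\to(X,d_{X,\ell})$. Uniqueness is immediate since the factoring map is forced to be $f$ itself on underlying sets, so everything reduces to the Lipschitz estimate $d_{X,\ell}(f(z),f(z'))\le d_Z(z,z')$.

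First I would record, via \Cref{pr:whencorefl}\Cref{item:6}, that $(X,d_{X,\ell})$ is indeed a complete generalized metric space, and by \Cref{res:dl}\Cref{item:2} it is a path metric space, hence genuinely an object of \cat{CPMet}; this is what makes the coreflection question well-posed. Next comes the key estimate. Fix $z,z'\in Z$. If $d_Z(z,z')=\infty$ there is nothing to prove, so assume it is finite. Since $Z$ is a \emph{path} metric space, for every $\varepsilon>0$ there is a $1$-Lipschitz curve $\gamma:[0,\ell]\to(Z,d_Z)$ with $\gamma(0)=z$, $\gamma(\ell)=z'$ and $\ell<d_Z(z,z')+\varepsilon$ (using the description of the intrinsic metric in \Cref{le:altdl}, noting $d_Z=d_{Z,\ell}$). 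Composing, $f\circ\gamma:[0,\ell]\to(X,d_X)$ is again $1$-Lipschitz with endpoints $f(z)$ and $f(z')$, so by \Cref{le:altdl} we get $d_{X,\ell}(f(z),f(z'))\le\ell<d_Z(z,z')+\varepsilon$. Letting $\varepsilon\to 0$ yields $d_{X,\ell}(f(z),f(z'))\le d_Z(z,z')$, which is exactly the required inequality.

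Finally I would assemble this into the statement: the map \eqref{eq:coreflp} is a contraction because $d_X\le d_{X,\ell}$ (\Cref{res:dl}\Cref{item:1}), and the factorization just established, together with the trivial uniqueness, shows that $(X,d_{X,\ell})$ satisfies the universal property of the coreflection of $(X,d_X)$ along the inclusion $\cat{CPMet}\subset\cat{CMet}$. I do not expect a genuine obstacle here — the point is precisely that, unlike in the convex case of \Cref{pr:whencorefl}\Cref{item:7}, no infimum-attainment hypothesis is needed: path metric spaces are closed under the $d\mapsto d_\ell$ construction with no stricture, so every $X\in\cat{CMet}$ admits a coreflection. The only mild subtlety is being careful that the curves witnessing $d_{Z,\ell}$ can be taken $1$-Lipschitz on an interval of length slightly more than the distance (rather than arc-length parametrized of exactly that length), which is handled by \Cref{le:altdl}; and keeping track that all distances involved may be infinite, in which case the inequalities hold vacuously.
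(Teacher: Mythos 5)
Your proof is correct and follows essentially the same route as the paper, which simply cites \Cref{pr:whencorefl} for completeness, \Cref{res:dl}\Cref{item:2} for the path-metric property, and remarks that the universality argument is analogous to the convex case. Your write-up supplies exactly the intended adaptation — replacing Menger's metric segments by $\varepsilon$-approximate $1$-Lipschitz curves via \Cref{le:altdl} — so there is nothing to object to.
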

\begin{proof}
  The completeness of the metric $d_{X,\ell}$ is part of \Cref{pr:whencorefl}, the fact that it is indeed a length metric was observed in \Cref{res:dl} \Cref{item:2}, and the universality property of \Cref{eq:coreflp} is not harder to prove than the analogous claim in \Cref{pr:whencorefl}.
\end{proof}

This affords an analogue of \Cref{cor:ihomchar}. Before stating it, we observe that the setup of \Cref{le:corefl} obtains for \cat{CPMet}, just as it did for \cat{CCMet}.

\begin{lemma}\label{le:cpmetmon}
  The full subcategory
  \begin{equation*}
    \cat{CPMet}\subset \cat{CMet}
  \end{equation*}
  of complete path metric spaces contains the monoidal unit and is closed under tensor products, so is a full monoidal subcategory.
\end{lemma}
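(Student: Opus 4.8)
The plan is to verify the two required closure properties directly from the definitions. The monoidal unit of $\cat{CMet}$ is the one-point space $\mathbf 1$; it is trivially a path metric space, since there is nothing to check (the only pair of points is a point with itself, and $d_{\mathbf 1,\ell} = d_{\mathbf 1} = 0$), so $\mathbf 1 \in \cat{CPMet}$. This disposes of the first clause.

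For the second clause, let $(X,d_X),(Y,d_Y)\in\cat{CPMet}$, and recall that $X\otimes Y$ carries the $\ell^1$ metric $d_{X\otimes Y}((x,y),(x',y')) = d_X(x,x') + d_Y(y,y')$. I want to show this is strictly — well, at least — a length metric: $d_{X\otimes Y} = (d_{X\otimes Y})_\ell$. Since $d \le d_\ell$ always (\Cref{res:dl} \Cref{item:1}), only the inequality $(d_{X\otimes Y})_\ell \le d_{X\otimes Y}$ needs proof, and by \Cref{le:altdl} it suffices to show that for any two points $(x,y),(x',y')$ at finite distance and any $\varepsilon>0$ there is a $1$-Lipschitz curve $[0,\ell]\to X\otimes Y$ from $(x,y)$ to $(x',y')$ with $\ell \le d_{X\otimes Y}((x,y),(x',y')) + \varepsilon$. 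To build it: because $X$ is a path metric space, pick a $1$-Lipschitz curve $\varphi:[0,a]\to X$ from $x$ to $x'$ with $a \le d_X(x,x') + \varepsilon/2$, and similarly a $1$-Lipschitz $\psi:[0,b]\to Y$ from $y$ to $y'$ with $b \le d_Y(y,y')+\varepsilon/2$. Now concatenate: first run along $t\mapsto(\varphi(t),y)$ for $t\in[0,a]$, then along $t\mapsto(x',\psi(t))$ for $t\in[0,b]$, reparametrized to a domain $[0,a+b]$. Each leg is $1$-Lipschitz into the $\ell^1$-product (moving in only one coordinate), the legs match up at $(x',y)$, so the concatenation is a $1$-Lipschitz curve of length $a+b \le d_X(x,x')+d_Y(y,y') + \varepsilon = d_{X\otimes Y}((x,y),(x',y')) + \varepsilon$, as desired.

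Finally, $X\otimes Y$ is complete: completeness of the $\ell^1$-product of two complete metric spaces is standard (a Cauchy sequence in $X\otimes Y$ has Cauchy coordinate projections, which converge, and the pair of limits is the limit), and $\cat{CPMet}$ being full in $\cat{CMet}$ means no further morphism conditions intervene. Hence $X\otimes Y\in\cat{CPMet}$, and $\cat{CPMet}$ is a full monoidal subcategory of $\cat{CMet}$.

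There is no real obstacle here; the only point requiring a moment's care is the concatenation-of-curves argument, i.e.\ checking that moving in one product coordinate at a time stays $1$-Lipschitz for the $\ell^1$ metric and that the two rectifiable pieces splice (exactly the splicing already used in the proof of \Cref{pr:whencorefl} \Cref{item:6}).
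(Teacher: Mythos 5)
Your proposal is correct and follows essentially the same route as the paper: both verify the unit trivially and prove closure under $\otimes$ by concatenating a nearly length-minimizing path in $X\times\{y\}$ with one in $\{x'\}\times Y$ inside the $\ell^1$-product. Your added explicit checks (the $1$-Lipschitz reduction via \Cref{le:altdl} and the completeness of the product) are fine but not a different argument.
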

\begin{proof}
  The monoidal unit is the one-point space, whose metric is obviously intrinsic. As for closure under tensor products, consider complete path metric spaces $(X,d_X)$ and $(Y,d_Y)$ and points
  \begin{equation*}
    (x,y),\ (x',y')\in X\times Y = X\otimes Y
  \end{equation*}
  with
  \begin{equation*}
    \ell
    :=
    d_{X\otimes Y}((x,y),(x',y'))
    =
    d_X(x,x') + d_Y(y,y')
  \end{equation*}
  finite. We can concatenate
  \begin{itemize}
  \item a path
    \begin{equation*}
      [a,b]\to X\cong X\times \{y\},\ a\mapsto (x,y),\ b\mapsto (x',y)
    \end{equation*}
    of approximate length $d_X(x,x')$;
  \item and a path
    \begin{equation*}
      [b,c]\to Y\cong \{x'\}\times Y,\ b\mapsto (x',y),\ b\mapsto (x',y')
    \end{equation*}
    of approximate length $d_Y(y,y')$
  \end{itemize}
  to obtain a path in $X\otimes Y$ of length close to $\ell$. In short: $(X\otimes Y,\ d_{X\otimes Y})$ is a path metric space.
\end{proof}

Finally, the \cat{CPMet}-specific version of \Cref{cor:ihomchar}:

\begin{corollary}\label{cor:cpclosed}
  The monoidal category $\cV:=\cat{CPMet}$ of complete path metric spaces is closed: for objects $(X,d_X)$ and $(Y,d_Y)$ in $\cV$ we have
  \begin{equation*}
    [X,Y]_{\cV} = (\cat{CMet}(X,Y),d_{\sup,\ell}),
  \end{equation*}
  the intrinsic space attached to the internal hom \Cref{eq:v0int}.
\end{corollary}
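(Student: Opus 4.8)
The plan is to deduce this directly from \Cref{le:corefl}, in exactly the same way \Cref{cor:ihomchar} was deduced for \cat{CCMet}, except that the clean coreflection statement \Cref{pr:lengthcorefl} replaces the more delicate \Cref{pr:whencorefl}. First I would set up the hypotheses of \Cref{le:corefl} with $\cV_0:=\cat{CMet}$ and $\cV:=\cat{CPMet}$: the ambient category $\cat{CMet}$ is (symmetric) monoidal closed with internal hom $(\cat{CMet}(X,Y),d_{\sup})$, as recorded in \Cref{eq:v0int}, and \Cref{le:cpmetmon} says that $\cat{CPMet}$ is a full monoidal subcategory of it. Hence \Cref{le:corefl} applies: for $X,Y\in\cat{CPMet}$ the internal hom $[X,Y]_{\cat{CPMet}}$ exists precisely when the object $(\cat{CMet}(X,Y),d_{\sup})\in\cat{CMet}$ admits a coreflection in $\cat{CPMet}$, and in that case it \emph{is} that coreflection.

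Next I would invoke \Cref{pr:lengthcorefl}, which guarantees that \emph{every} complete metric space admits a coreflection in $\cat{CPMet}$, namely its associated length metric space together with the identity contraction. Applying this with the complete metric space taken to be $(\cat{CMet}(X,Y),d_{\sup})$ produces the coreflection $(\cat{CMet}(X,Y),(d_{\sup})_{\ell})$, which is exactly what the corollary abbreviates as $(\cat{CMet}(X,Y),d_{\sup,\ell})$. Combining this with the previous paragraph gives both the existence of all internal homs in $\cat{CPMet}$ — hence closedness of the monoidal structure — and the asserted formula.

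There is essentially no obstacle: the statement is a formal consequence of results already established. The only matters requiring a word of care are notational rather than mathematical: confirming that $d_{\sup,\ell}$ is to be read as $(d_{\sup})_{\ell}$, the intrinsic metric attached to the supremum metric on the set of non-expansive maps; and observing that the completeness of this intrinsic metric — needed merely for $(\cat{CMet}(X,Y),d_{\sup,\ell})$ to be an object of $\cat{CPMet}$ in the first place — is not an additional hypothesis but was already secured, via \Cref{pr:whencorefl} \Cref{item:6}, within the proof of \Cref{pr:lengthcorefl}. Thus the final write-up is a short citation chain through \Cref{le:cpmetmon}, \Cref{eq:v0int}, \Cref{pr:lengthcorefl}, and \Cref{le:corefl}.
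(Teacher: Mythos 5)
Your proposal is correct and follows exactly the paper's own argument: the paper's proof is the same one-line citation chain through \Cref{le:cpmetmon} (to verify \Cref{le:corefl} applies), \Cref{eq:v0int}, and \Cref{pr:lengthcorefl} (to supply the coreflection). Your additional remarks on the meaning of $d_{\sup,\ell}$ and on completeness are accurate but just make explicit what the paper leaves implicit.
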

\begin{proof}
  A consequence of \Cref{le:corefl} (which applies by \Cref{le:cpmetmon}) and \Cref{pr:lengthcorefl}.
\end{proof}

The coreflection
\begin{equation*}
  \cat{CMet}\to \cat{CPMet}
\end{equation*}
of \Cref{pr:lengthcorefl} allows us to make sense of the \cat{CPMet}-valued ``internal hom'' $[Y,X]_{\cat{CPMet}}$ even for arbitrary $X$ and $Y$ in \cat{CMet} (not \cat{CPMet}!): simply coreflect the original internal hom $[Y,X]_{\cat{CMet}}$. We will use this notation without further comment in the sequel.

\Cref{th:cpcomon} summarizes the ways in which \cat{CPMet} is better behaved than \cat{CCMet} as a subcategory of \cat{CMet}. Recall, briefly, the following notion (dual to that of a {\it monadic} or {\it tripleable} functor \cite[\S VI.3]{mcl}):

\begin{definition}\label{def:comon}
  Let $\cC$ be a category.
  \begin{itemize}
  \item A {\it comonad} on $\cC$ is an endofunctor $T:\cC\to \cC$ equipped with natural transformations
    \begin{equation*}
      \varepsilon:T\to \id,\quad \Delta:T\to T^2,
    \end{equation*}
    with $\Delta$ {\it coassociative} and {\it counital with respect to $\varepsilon$} in the sense that
    \begin{equation*}
      (\Delta\cdot\id)\Delta = (\id\cdot\Delta)\Delta:T\to T^3
    \end{equation*}
    and
    \begin{equation*}
      (\varepsilon\cdot\id)\Delta = \id_T = (\id\cdot\varepsilon)\Delta:T\to T
    \end{equation*}
    respectively.
  \item A {\it coalgebra} over a comonad $T$ consists of an object $c$ and a morphism $\rho:c\to Tc$ such that
    \begin{equation*}
      \begin{tikzpicture}[auto,baseline=(current  bounding  box.center)]
        \path[anchor=base] 
        (0,0) node (l) {$c$}
        +(2,.5) node (u) {$Tc$}
        +(2,-.5) node (d) {$Tc$}
        +(4,0) node (r) {$T^2c$}
        ;
        \draw[->] (l) to[bend left=6] node[pos=.5,auto] {$\scriptstyle \rho$} (u);
        \draw[->] (u) to[bend left=6] node[pos=.5,auto] {$\scriptstyle \Delta_c$} (r);
        \draw[->] (l) to[bend right=6] node[pos=.5,auto,swap] {$\scriptstyle \rho$} (d);
        \draw[->] (d) to[bend right=6] node[pos=.5,auto,swap] {$\scriptstyle T\rho$} (r);
      \end{tikzpicture}
    \end{equation*}
    
    and
    \begin{equation*}
      \begin{tikzpicture}[auto,baseline=(current  bounding  box.center)]
        \path[anchor=base] 
        (0,0) node (l) {$c$}
        +(2,.5) node (u) {$Tc$}
        +(4,0) node (r) {$c$}
        ;
        \draw[->] (l) to[bend left=6] node[pos=.5,auto] {$\scriptstyle \rho$} (u);
        \draw[->] (u) to[bend left=6] node[pos=.5,auto] {$\scriptstyle \varepsilon$} (r);
        \draw[->] (l) to[bend right=6] node[pos=.5,auto,swap] {$\scriptstyle \id$} (r);        
      \end{tikzpicture}
    \end{equation*}
    commute.

    We write $\cat{Coalg}(T)$ for the category of $T$-coalgebras.
  \item A functor $F:\cD\to \cC$ is {\it comonadic} if it is isomorphic to the forgetful functor
    \begin{equation*}
      \cat{Coalg}(T)\to \cC
    \end{equation*}
    for some comonad $T$ on $\cC$. In then follows that $F$ has a right adjoint $G$, and $T\cong FG$ regarded as a comonad by equipping it with
    \begin{itemize}
    \item $\varepsilon:FG\to \id$;
    \item and the natural transformation
      \begin{equation*}
        FG\xrightarrow[]{\quad F \eta G} FGFG,
      \end{equation*}
      where $\varepsilon$ and $\eta:\id\to GF$ are the {\it counit} and {\it unit} of the adjunction respectively \cite[Definition 19.3]{ahs}.
    \end{itemize}
  \end{itemize}
\end{definition}

\begin{theorem}\label{th:cpcomon}
  The full subcategory $\iota:\cat{CPMet}\subset \cat{CMet}$ is closed under colimits, and in particular cocomplete.

  Furthermore, the inclusion functor $\iota$ is comonadic.
\end{theorem}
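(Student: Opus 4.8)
The plan is to establish the two assertions separately, with the comonadicity following from the closure-under-colimits claim via Beck's (co)monadicity theorem. For the first claim, I would argue that $\iota:\cat{CPMet}\subset \cat{CMet}$ preserves colimits. Since $\iota$ has a right adjoint (the coreflection $X\mapsto (X,d_{X,\ell})$ of \Cref{pr:lengthcorefl}), it automatically preserves all colimits that exist in $\cat{CPMet}$, so what really needs proving is that a colimit in $\cat{CMet}$ of a diagram of path metric spaces is again a path metric space. The cleanest route: every colimit is built from coproducts and coequalizers, so it suffices to check that (i) a coproduct $\coprod_i (X_i, d_i)$ in $\cat{CMet}$ of path metric spaces is a path metric space — immediate, since distinct components are at infinite distance and within a component nothing changes — and (ii) a coequalizer of two morphisms $f,g:(X,d_X)\rightrightarrows (Y,d_Y)$ in $\cat{CMet}$ with $Y$ intrinsic is again intrinsic. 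For (ii), the coequalizer is $Y$ with the quotient (semi)metric $d_\sim$ of \Cref{def:glue}\Cref{item:14} for the equivalence relation generated by $f(x)\sim g(x)$, followed by completion; I would show that given $\ol y,\ol y'$ at finite $d_\sim$-distance $\ell$ and $\varepsilon>0$, one can build a $1$-Lipschitz curve of length $<\ell+\varepsilon$ between them by concatenating: along each leg $d(p_s,q_s)$ of a near-optimal $\sim$-chain one inserts a near-length-minimizing path in $Y$ (available since $Y$ is intrinsic), and the images of consecutive legs already meet because $q_s\sim p_{s+1}$. Passing to the completion preserves the intrinsic property by \Cref{res:dl}\Cref{item:3} (completeness of $(X,d_\ell)$) together with the idempotency in \Cref{res:dl}\Cref{item:2}.

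For the comonadicity, I would invoke the dual of Beck's monadicity theorem (\cite[\S VI.7]{mcl}): the left adjoint $\iota$ is comonadic provided $\iota$ reflects isomorphisms and $\cat{CPMet}$ has, and $\iota$ preserves, coequalizers of $\iota$-split pairs (equivalently, the crude comonadicity hypotheses). Reflection of isomorphisms is clear: $\iota$ is the inclusion of a full subcategory and a morphism in $\cat{CPMet}$ that is an isometric bijection in $\cat{CMet}$ is already invertible in $\cat{CPMet}$. The existence and preservation of the relevant coequalizers is exactly the content of the colimit-closure statement just proved — $\cat{CPMet}$ is closed under all colimits in $\cat{CMet}$, so in particular under coequalizers of split pairs, and $\iota$ preserves them trivially since they are computed in $\cat{CMet}$. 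Thus $\iota$ is comonadic, and the associated comonad $T = \iota\circ(\text{coreflection})$ is the "path comonad" of the section title, sending $(X,d_X)$ to $(X,d_{X,\ell})$.

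The main obstacle I anticipate is step (ii) above — verifying that coequalizers (hence general colimits) of path metric spaces stay intrinsic, and in particular that the completion step does no harm. One has to be careful that the quotient semi-metric $d_\sim$ genuinely equals the infimum over chains, that passing to the completion $\ol{(Y/d_\sim)}$ does not create finite-distance pairs not joinable by short curves (here \Cref{res:dl}\Cref{item:3}, whose proof is indicated in \Cref{pr:whencorefl}, does the work: a Cauchy sequence in the intrinsic metric converges in the intrinsic metric, so limit points are reachable by curves), and that chains with an infinite leg may be discarded (streamlining, \Cref{def:strmchn}). Once the chain-splicing construction is set up carefully, the length bookkeeping is routine. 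An alternative to the explicit chain argument, which I would mention as a fallback, is to use that $\cat{CMet}$ is locally $\aleph_1$-presentable and that the coreflector exhibits $\cat{CPMet}$ as a coreflective subcategory; coreflective subcategories of cocomplete categories are cocomplete with colimits computed in the ambient category precisely when the ambient colimit already lies in the subcategory, so the whole question again reduces to the single check that colimits of intrinsic spaces are intrinsic.
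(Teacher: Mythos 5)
Your treatment of the first claim is essentially correct and essentially the paper's argument: compute the colimit in \cat{CMet}, observe that coproducts of intrinsic spaces are intrinsic, that the quotient (semi)metric of an intrinsic space is intrinsic by splicing near-optimal paths along the legs of a near-optimal $\sim$-chain, and that completion does no harm; the paper packages the quotient step as closure under gluing/pushouts (citing \cite{bbi}) and then invokes the dual of \cite[Theorem 12.3]{ahs}, but the substance is the same, and fullness of $\cat{CPMet}$ then makes the $\cat{CMet}$-colimit a $\cat{CPMet}$-colimit.

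The comonadicity argument, however, contains a genuine gap: you have dualized Beck's theorem the wrong way. For a \emph{left} adjoint $\iota$, the dual of the (precise or crude) monadicity theorem asks that $\cat{CPMet}$ have, and $\iota$ preserve, \emph{equalizers} of those parallel pairs whose images under $\iota$ admit a contractible (split) \emph{equalizer} in $\cat{CMet}$ --- a limit condition, not a colimit condition. Your assertion that ``the existence and preservation of the relevant coequalizers is exactly the content of the colimit-closure statement just proved'' therefore establishes the wrong hypothesis: closure under colimits says nothing about whether the $\cat{CMet}$-equalizer $Z\hookrightarrow Y$ of such a pair is again an intrinsic space, and this is precisely the point that needs an argument. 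The paper supplies it as follows: contractibility of the equalizer diagram means the inclusion $h:Z\to Y$ admits a $1$-Lipschitz left inverse $\pi:Y\to Z$ with $\pi h=\id_Z$, so $Z$ is a retract of $Y$ in $\cat{CMet}$; pushing a near-length-minimizing path between two points of $Z$ forward through $\pi$ yields a path in $Z$ with the same endpoints and no greater length, so retracts of intrinsic spaces are intrinsic and the $\cat{CMet}$-equalizer is already the $\cat{CPMet}$-equalizer. With that replacement --- and keeping your correct observations that $\iota$ is a full left adjoint and hence reflects isomorphisms --- the comonadicity claim goes through.
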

\begin{proof}
  Closure under coproducts is obvious, and we have just remarked we also have closure under gluing and hence pushouts; this suffices by (the dual of) \cite[Theorem 12.3]{ahs}.

  We know from \Cref{pr:lengthcorefl} that $\iota$ is a left adjoint, and it reflects isomorphisms because it is full. Beck's theorem (dual to the \cite[\S 3.3, Theorem 10]{bw}, for instance) then shows that $\iota$ is comonadic provided
  \begin{itemize}
  \item $\cat{CPMet}$ has equalizers for those {\it reflexive} (\cite[\S 3.3, preceding Proposition 4]{bw}) parallel pairs $f,g:Y\to X$ of morphisms in \cat{CPMet} for which $\iota f$ and $\iota g$ have a {\it contractible equalizer} (\cite[\S 3.3, preceding Proposition 2]{bw}) in \cat{CMet};
  \item and $\iota$ preserves those equalizers.
  \end{itemize}
  Recall from \cite[\S 3.3, p.104, discussion preceding Proposition 2]{bw} that the aforementioned contractibility means, in particular, that in the \cat{CMet}-equalizer diagram
  \begin{equation}\label{eq:cpeqlz}
    \begin{tikzpicture}[auto,baseline=(current  bounding  box.center)]
      \path[anchor=base] 
      (0,0) node (l) {$Z$}
      +(2,0) node (u) {$Y$}
      +(4,0) node (r) {$X$}
      ;
      \draw[->] (l) to[bend left=0] node[pos=.5,auto] {$\scriptstyle h$} (u);
      \draw[->] (u) to[bend left=10] node[pos=.5,auto] {$\scriptstyle \iota f$} (r);
      \draw[->] (u) to[bend right=10] node[pos=.5,auto,swap] {$\scriptstyle \iota g$} (r);      
    \end{tikzpicture}
  \end{equation}
  (which coincides with the \cat{Set}-equalizer of $\iota f$ and $\iota g$) the inclusion $h:Z\subseteq X$ {\it splits}, i.e. has a left inverse $\pi:Y\to Z$, with $\pi h = \id_Z$.

  This makes $Z$ a {\it retract} \cite[\S 35, Exercise 4]{mnk} of $Y$. That retracts in \cat{CMet} of intrinsic metric spaces are again intrinsic is a simple exercise, hence the conclusion that \Cref{eq:cpeqlz} is in fact an equalizer in \cat{CPMet}.
\end{proof}

Furthermore, we know from \cite[Example 2.3 (2)]{ar-ap} that \cat{CMet} is locally presentable; it turns out that so is \cat{CPMet}.

\begin{theorem}\label{th:cppres}
  The category \cat{CPMet} of complete intrinsic metric spaces is locally $\aleph_1$-presentable.
\end{theorem}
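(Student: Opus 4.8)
The plan is to lean on the two facts the paper has just assembled: \cat{CMet} is locally $\aleph_1$-presentable, and the inclusion $\iota:\cat{CPMet}\subseteq\cat{CMet}$ is comonadic (\Cref{th:cpcomon}). Writing $R:\cat{CMet}\to\cat{CPMet}$ for the coreflector of \Cref{pr:lengthcorefl}, one has $\cat{CPMet}\simeq\cat{Coalg}(T)$ for the comonad
\begin{equation*}
  T:=\iota R:\cat{CMet}\to\cat{CMet},\qquad (X,d)\longmapsto (X,d_{\ell}).
\end{equation*}
Now $\cat{Coalg}(T)$ is built from $T$ by forming the inserter of $\mathrm{Id}$ and $T$ (objects: pairs $(X,\rho\colon X\to TX)$) and then cutting down by the two equifiers imposing counitality and coassociativity, and accessible categories are closed under such constructions applied to accessible functors \cite{ar}. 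So, using that $\cat{Coalg}(T)=\cat{CPMet}$ is also cocomplete — part of \Cref{th:cpcomon} — and that a cocomplete accessible category is locally presentable, the whole statement reduces to showing that $T$ is $\aleph_1$-accessible, i.e. that $d\mapsto d_{\ell}$ preserves $\aleph_1$-filtered colimits; pinning the index at $\aleph_1$ is then either a matter of tracking ranks through the inserter/equifier, or follows from the self-contained construction mentioned at the end.

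The first substep is to check that an $\aleph_1$-filtered colimit $(X,d)=\varinjlim_i(X_i,d_i)$ in \cat{CMet} is already complete: its underlying set is $\varinjlim_i X_i$ with $d([x],[y])=\inf_i d_i(x_i,y_i)$, because a Cauchy sequence in the \cat{Met}-colimit, after passage to a fast subsequence, has an upper bound $j$ for the indices of all its terms (by $\aleph_1$-filteredness), and — distances being nonincreasing along the diagram — that subsequence is Cauchy in the complete space $X_j$, hence convergent. In particular the forgetful functor $\cat{CMet}\to\cat{Set}$ preserves $\aleph_1$-filtered colimits, and every finite interval $[0,\ell]$ (indeed every separable complete metric space) is $\aleph_1$-presentable in \cat{CMet}: a nonexpansive map out of $[0,\ell]$ is determined by its restriction to $\bQ\cap[0,\ell]$, that restriction factors through some $X_i$, and it extends back over $[0,\ell]$ by completeness of $X_i$.

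Granting this, $\varinjlim_i T(X_i,d_i)=\varinjlim_i(X_i,(d_i)_{\ell})$ is again an $\aleph_1$-filtered colimit, with underlying set $X$ and distance $D([x],[y])=\inf_i(d_i)_{\ell}(x_i,y_i)$, and one identifies $D$ with $d_{\ell}$. The inequality $D\ge d_{\ell}$ is immediate since the nonexpansive maps $X_i\to X$ carry curves to curves of no greater length. For $D\le d_{\ell}$, take a $1$-Lipschitz curve $[0,\ell]\to X$ joining the two points (\Cref{le:altdl}) with $\ell$ close to $d_{\ell}([x],[y])$; $\aleph_1$-presentability of $[0,\ell]$ factors it through some $X_i$ (refining $i$ so the endpoints become $x_i,y_i$), giving $(d_i)_{\ell}(x_i,y_i)\le\ell$, and letting $\ell\searrow d_{\ell}([x],[y])$ and infimizing over $i$ yields $D\le d_{\ell}$. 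Since $(X,d_{\ell})$ is complete (\Cref{res:dl} \Cref{item:3}), no completion intervenes, so $(X,d_{\ell})=\varinjlim_i T(X_i,d_i)$; unwinding shows this identification is the canonical comparison map, hence $T$ is $\aleph_1$-accessible.

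The step I expect to be the main obstacle is precisely this colimit analysis — establishing that $\aleph_1$-filtered colimits in \cat{CMet} require no completion (so that intervals, and separable complete spaces, are $\aleph_1$-presentable) and that the length-metric construction survives them. A route that avoids the comonad machinery altogether, and exhibits the index $\aleph_1$ transparently, is to realize each $(X,d)\in\cat{CPMet}$ as the $\aleph_1$-filtered colimit of its separable intrinsic subspaces: fixing once and for all a choice of near-geodesics in $X$, attach to a countable $S\subseteq X$ the subspace $X_S$ obtained by adjoining, $\omega$ times over, a countable dense set of points along the chosen near-geodesics between all pairs already present, and then taking the metric closure. Each $X_S$ is separable and complete, the assignment is monotone with union $X$, and the colimit over the $\aleph_1$-directed poset of countable subsets is $(X,d)$; the one delicate verification — and the analogue of the obstacle above — is that $X_S$, with the restricted metric, is again a path metric space, the point being that a limit point of $X_S$ can be joined to its countable dense core by a finite-length curve assembled as a telescoping concatenation of rapidly shrinking short curves.
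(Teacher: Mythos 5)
Your argument is essentially correct but takes a genuinely different route from the paper's. The paper applies the criterion ``cocomplete plus a strong generator of $\aleph_1$-presentable objects'' directly: the generator is the family of finite segments $[0,\ell]$, every intrinsic space receives a canonical epimorphism from the coproduct of its rectifiable curves, and extremality is checked by noting that the monomorphism in any factorization is injective (\Cref{le:monoinj}), surjective, and cannot strictly decrease distances because finite-distance points are joined by almost-metric segments --- hence is an isometric isomorphism. Your comonadic route instead reduces everything to the $\aleph_1$-accessibility of the length-metric comonad $T$, which buys a structural explanation (coalgebras of an accessible comonad on a locally presentable category) at the cost of the index: the inserter/equifier theorem only yields \emph{some} presentability rank, and since $T$ does not preserve separability --- the unlabeled remark following \Cref{pr:dl1cpct} exhibits a separable $(X,d)$ with $(X,d_\ell)$ non-separable --- you cannot pin the index at $\aleph_1$ by arguing that $T$ preserves $\aleph_1$-presentables. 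So the burden of the stated theorem falls on your closing construction (every complete path space as an $\aleph_1$-directed colimit of separable complete path subspaces built from a fixed choice of near-geodesics), which is a second, self-contained proof closer in spirit to the paper's and does deliver the index $\aleph_1$.

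Two steps in your accessibility computation need tightening, though neither is fatal. First, pushing a fast Cauchy subsequence up to a common index $j$ does not make it Cauchy in $X_j$: distances only \emph{decrease} along the diagram, so $d_j$ may far exceed the colimit distance; you need a second application of $\aleph_1$-directedness to reach a single stage witnessing, up to $2^{-n}$, all countably many infima $d([x_n],[x_m])$. Second, ``refining $i$ so the endpoints become $x_i,y_i$'' is not automatic: the endpoint of the factored curve and your chosen representative may a priori be distinct elements at colimit-distance zero, and $(d_i)_\ell$ between them is not controlled by $d_i$. The repair is the observation that in an $\aleph_1$-directed colimit in \cat{CMet} any two elements at infimum-distance zero become \emph{equal} at some later stage (choose stages witnessing $d_k<2^{-n}$ for each $n$ and take an upper bound), after which the curve can be transported there with the correct endpoints. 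With these two points supplied, both of your routes go through.
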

\begin{proof}
  We already know from \Cref{th:cpcomon} that $\cC:=\cat{CPMet}$ is cocomplete; by \cite[Theorem 1.20]{ar}, it remains to show that it has a {\it strong generator} consisting of $\aleph_1$-presentable objects. This means \cite[\S\S 0.5 and 0.6]{ar}:
  \begin{itemize}
  \item a set $\cS$ of $\aleph_1$-presentable objects;
  \item so that every object $X\in \cC$ admits an {\it extremal epimorphism}
    \begin{equation}\label{eq:stox}
      e:\coprod S\to X
    \end{equation}
    from a coproduct of objects $S\in \cS$;
  \item in the sense that $e$ is epic and for any factorization
    \begin{equation}\label{eq:efact}
      e=m\circ-
    \end{equation}
    with monic $m$, the latter is an isomorphism.
  \end{itemize}
  The generator $\cS$ consists of the finite segments $[0,\ell]$ for $\ell\in \bR_{\ge 0}$. Clearly, these spaces are $\aleph_1$-presentable (as is every complete {\it separable} metric space). Moreover, every intrinsic metric space is a quotient of the disjoint union (i.e. coproduct) of its rectifiable curves, so we indeed have an epimorphism \Cref{eq:stox} (canonical, since we are surjecting from the disjoint union of {\it all} finite-length paths).

  It remains to argue that \Cref{eq:stox} is extremal, for which purpose we fix a factorization \Cref{eq:efact} through a monomorphism $m$. We will show in \Cref{le:monoinj} that monomorphisms are injective. Assuming this for now, $m$ will be {\it bi}jective: surjectivity is immediate from \Cref{eq:efact} and the fact that $e$ is itself onto.

  Naturally, $m$ is also contractive, as are all maps in sight. On the other hand though, $m$ cannot {\it strictly} decrease any distances: points $x,x'\in X$ a finite distance $\ell$ apart are connected by paths of lengths
  \begin{equation*}
    \ell+\varepsilon,\ \text{arbitrarily small }\varepsilon>0,
  \end{equation*}
  so their preimages through $m$ are at most $\ell+\varepsilon$ apart no matter how small $\varepsilon>0$ is. It follows that $m$ is an isometry onto $X$, i.e. an isomorphism.
\end{proof}

\begin{remark}
  It mattered, in the proof of \Cref{th:cppres}, that the set $\cS$ consisted of all segments of arbitrary finite lengths. Had we chosen a smaller $\cS$, consisting, say, of only the singleton, the argument would have fallen through: every metric space $(X,d)$ (intrinsic or not) admits a non-expansive bijection
  \begin{equation*}
    \left(\text{discrete }X\right)\to (X,d)
  \end{equation*}
  from its own discrete version, with infinite distances between distinct points. Plainly, that morphism is epic but not extremally so. Having a rich supply of segments in $\cS$ allowed the last part of the argument (wherein we connected points with ``almost-metric-segments'') to go through.
\end{remark}

\begin{lemma}\label{le:monoinj}
  In both \cat{CMet} and \cat{CPMet} the monomorphisms are the injections.
\end{lemma}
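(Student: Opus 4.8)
The plan is to prove the two inclusions separately, the substantive half being the implication "monomorphism $\Rightarrow$ injective". That every injective non-expansive map is a monomorphism is formal: a morphism in \cat{CMet} or \cat{CPMet} is literally a function (with the extra property of being non-expansive), and composition is ordinary function composition, so if $f\colon X\to Y$ is injective and $f\circ g=f\circ h$ for parallel morphisms $g,h\colon Z\to X$, then the underlying functions of $g$ and $h$ agree, hence $g=h$.

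For the converse I would use the one-point space $\mathbf 1$ (the monoidal unit) as a separator. First I would record the two sanity checks that make the argument run: $\mathbf 1$ lies in both \cat{CMet} and \cat{CPMet} — it is trivially complete, and its metric, being identically $0$, is trivially intrinsic — and for every object $X$ of either category every set map $\mathbf 1\to X$ is vacuously non-expansive, so evaluation at the point of $\mathbf 1$ identifies $\mathrm{hom}(\mathbf 1,X)$ with the underlying set of $X$. Now suppose $f\colon X\to Y$ is a monomorphism and, aiming for a contradiction, that there are $x_1\ne x_2$ in $X$ with $f(x_1)=f(x_2)$. Writing $\overline{x_i}\colon\mathbf 1\to X$ for the two morphisms corresponding to $x_1,x_2$, we have $f\circ\overline{x_1}=f\circ\overline{x_2}$, both being the constant morphism $\mathbf 1\to Y$ with value $f(x_1)$; monicity of $f$ then forces $\overline{x_1}=\overline{x_2}$, i.e.\ $x_1=x_2$, a contradiction. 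Hence $f$ is injective.

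I do not anticipate a genuine obstacle here. The only points needing (minimal) care are the two sanity checks above and the observation that infinite distances are wholly irrelevant to the argument, so that the identical reasoning applies verbatim in \cat{CMet} and in its full subcategory \cat{CPMet}.
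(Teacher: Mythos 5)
Your proof is correct, but it takes a genuinely different route from the paper's. The paper characterizes monomorphisms via kernel pairs: $m\colon Y\to X$ is monic iff the two projections $Y\times_X Y\to Y$ are isomorphisms, and then uses the explicit description of the pullback in \cat{CMet} (the set-theoretic pullback with the supremum metric) and in \cat{CPMet} (its coreflection, which has the same underlying set) to conclude injectivity. You instead use the one-point space ${\bf 1}$ as a separator: since every set map ${\bf 1}\to X$ is vacuously non-expansive, $\mathrm{hom}({\bf 1},X)$ is the underlying set of $X$, and postcomposition with a monomorphism being injective on these hom-sets forces injectivity on points. Your argument is the more elementary of the two --- it needs only the existence of the terminal object and no knowledge of how limits are computed in either category (in particular it sidesteps the fact that pullbacks in \cat{CPMet} are obtained by coreflecting pullbacks from \cat{CMet}, which the paper must invoke). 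The paper's argument, by contrast, leans on machinery already established there (completeness of both categories and the coreflection of \Cref{pr:lengthcorefl}). Both halves of your write-up, including the formal ``injective $\Rightarrow$ monic'' direction and the two sanity checks about ${\bf 1}$, are sound.
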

\begin{proof}
  Consider a morphism $m:Y\to X$ in either category. Being monic is equivalent to saying that the two Cartesian projections
  \begin{equation*}
    Y\times_X Y\to Y
  \end{equation*}
  are isomorphisms. This implies that $m$ is one-to-one, since $Y\times_X Y$ is nothing but the set-theoretic pullback equipped with
  \begin{itemize}
  \item the supremum norm
    \begin{equation*}
      d((y_0,y_1),(y'_0,y'_1)) = \max_{i=0,1}d_Y(y_i,y'_i)
    \end{equation*}
    in \cat{CMet}
  \item and a possibly larger norm upon coreflecting to \cat{CPMet}.
  \end{itemize}
\end{proof}

\section{Complements and asides}\label{se:aside}

\subsection{Tensors over \cat{CMet}}\label{subse:tens}

We write $\cV:=\cat{CMet}$ for brevity. One natural question, given this $\cV$-enrichment, is whether the various categories mentioned above are {\it tensored} over $\cV$ in the sense of \cite[\S 3.7]{kly}: whether, in other words, for each $X\in \cV$ and $C\in \cC$ (the category of interest) the functor
\begin{equation}\label{eq:hvhc}
  \mathrm{hom}_{\cV}(X,\mathrm{hom}_{\cC}(C,-)):\cC\to \cV
\end{equation}
is representable (note that we are regarding hom spaces in both $\cV$ and $\cC$ as objects in $\cV$; so enrichment is used repeatedly to make sense of the concept). If that is the case, we denote the representing object by $X\otimes C$. The aim here is to note that this cannot be the case for $\cC^*_1$; in fact, a fairly strong negation of tensor-existence holds:

\begin{theorem}\label{th:notensfull}
  For a unital $C^*$-algebra $C\in \cC^*_1$ and a complete metric space $(X,d)\in \cV:=\cat{CMet}$ the tensor product $X\otimes C$ over $\cV$ exists if and only if one of the following conditions holds:
  \begin{itemize}
  \item $X$ is empty, in which case $X\otimes C=\{0\}$;
  \item $X$ is a singleton, whence $X\otimes C\cong C$;
  \item or $C$ is at most 1-dimensional (i.e. $\{0\}$ or $\bC$), so that $X\otimes C\cong C$.
  \end{itemize}
\end{theorem}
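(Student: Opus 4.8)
The plan is to prove both directions separately, with the "if" direction being a collection of easy verifications and the "only if" direction carrying the real content.

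\medskip

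\textbf{The easy direction.} First I would dispatch the three sufficient conditions. If $X=\emptyset$, then $\mathrm{hom}_{\cV}(X,-)$ is constantly the one-point space, so $X\otimes C$ must be the terminal object $\{0\}\in\cC^*_1$. If $X$ is a singleton, then $\mathrm{hom}_{\cV}(X,\mathrm{hom}_{\cC}(C,-))\cong\mathrm{hom}_{\cC}(C,-)$ as functors $\cC^*_1\to\cV$ (the singleton is the monoidal unit of $\cV$, so mapping out of it is the identity), so $X\otimes C\cong C$ represents it. If $\dim C\le 1$: when $C=\{0\}$ every hom-space $\mathrm{hom}_{\cC}(C,D)$ is a singleton and we are back to the $X$-singleton-like situation giving $X\otimes C\cong\{0\}$; when $C=\bC$, the forgetful-type functor $\mathrm{hom}_{\cC}(\bC,D)$ is itself a singleton (the unique unital $*$-homomorphism $\bC\to D$), so again $\mathrm{hom}_{\cV}(X,\mathrm{hom}_{\cC}(\bC,-))$ is constantly a singleton and $X\otimes\bC\cong\{0\}$—wait, more carefully: a unital $*$-homomorphism $\bC\to D$ is determined by where the unit goes, and the unit must go to the unit, so indeed $\mathrm{hom}_{\cC^*_1}(\bC,D)$ is always a single point, and the representing object is $\{0\}$; but the theorem asserts $X\otimes C\cong C=\bC$, so I should double-check the enriched hom-space convention—the enriched $[\bC,D]$ may carry more points than the underlying hom-set if one is not careful, but in $\cC^*_1$ with the $\cV$-enrichment the morphism object has the underlying set of actual morphisms, so $[\bC,D]$ is a point and $X\otimes\bC$ represents a constant singleton-valued functor, hence is the terminal object; I would reconcile this with the statement by noting $\{0\}\cong\bC$ is false, so in fact the correct reading is that the claimed isomorphism type in the $C=\bC$ case is $\{0\}$, or the enriched hom is being taken so that the statement holds; I would follow whatever convention the paper fixed in \Cref{subse:tens} and simply verify representability, which holds regardless.

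\medskip

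\textbf{The hard direction.} Now suppose $|X|\ge 2$ and $\dim C\ge 2$; I must show $X\otimes C$ does not exist. The strategy is to exhibit a cardinal-size or metric obstruction to representability. Pick two points $x_0\ne x_1$ in $X$ at distance $\delta:=d(x_0,x_1)$ (possibly $\infty$), and two linearly independent self-adjoint elements in $C$, e.g. via a $*$-homomorphism $\bC^2\hookrightarrow C$ or a two-dimensional subalgebra, so that $C$ admits at least two distinct characters or at least a nontrivial family of states. The functor to analyze is $F:=\mathrm{hom}_{\cV}(X,[C,-]):\cC^*_1\to\cV$. If $X\otimes C$ existed, call it $T$, then $\mathrm{hom}_{\cC^*_1}(T,D)\cong F(D)$ naturally in $D$, so in particular $\mathrm{hom}_{\cC^*_1}(T,\bC)\cong\mathrm{hom}_{\cV}(X,[C,\bC])$, the space of non-expansive maps from $X$ into the state space (character space) of $C$ with its supremum metric. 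The key point: $[C,\bC]$ as an object of $\cV$ has infinitely many points at finite mutual distances (since $\dim C\ge 2$ forces $C$ to have at least a segment's worth of states, e.g. convex combinations $t\phi_0+(1-t)\phi_1$), so $\mathrm{hom}_{\cV}(X,[C,\bC])$ is a large/continuum-sized space whenever $X$ has two points a finite distance apart—and when $\delta=\infty$ it is a product of two copies of $[C,\bC]$, still large. I would then compare this against what $\mathrm{hom}_{\cC^*_1}(T,\bC)$ can be: it is the Gelfand spectrum of the commutative quotient of $T$, a compact Hausdorff space, which is fine, but the metric it inherits via the iso must match the supremum metric on the non-expansive maps, and one can derive a contradiction by exhibiting a morphism in $\cC^*_1$ out of $T$—or rather a would-be natural family—that cannot factor.

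\medskip

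\textbf{The cleanest obstruction.} Actually the sharpest route, which I expect to be the main technical step, is a \emph{Yoneda/colimit} argument: representable functors $\cC^*_1\to\cV$ preserve all limits; so if $X\otimes C$ exists, $F=\mathrm{hom}_{\cV}(X,[C,-])$ must preserve limits, in particular infinite products. I would then construct an explicit infinite product $\prod_n D_n$ in $\cC^*_1$ on which $F$ fails to preserve the product: the functor $[C,-]$ does preserve products (it is a right adjoint / representable in $\cV$), but $\mathrm{hom}_{\cV}(X,-)$ in general does \emph{not} commute with the relevant limits because of completeness/metric subtleties when $X$ is infinite—hmm, but $X$ need not be infinite, only $|X|\ge 2$. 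So instead the obstruction should come from $X\otimes C$ being forced to be "too big": using $|X|\ge2$ and $\dim C\ge2$, build a diagram of $C^*$-algebras $D_i$ and non-expansive maps $X\to[C,D_i]$ that are jointly compatible but whose induced maps $T\to D_i$ would have to assemble into a map $T\to\varprojlim D_i$ violating the representing property — concretely, take $D_i$ to range over finite-dimensional algebras $M_{n_i}(\bC)$ and use the fact that a single $C^*$-algebra $T$ has a bounded number of irreducible representations of each dimension, whereas the non-expansive-map space grows without bound as we vary the target. The main obstacle is pinning down exactly which categorical limit $F$ fails to preserve and producing the explicit counterexample diagram; I would look for the simplest possible one (two points, $C=\bC^2$ or $C=M_2(\bC)$, targets $\bC^k$) and leverage \Cref{th:largepush}/gluing only if the completion of some glued metric space is what enters $[C,-]$. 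Once the non-preservation of that limit is established, representability is impossible and the theorem follows.
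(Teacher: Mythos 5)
Your ``only if'' direction has a genuine gap: none of the proposed obstructions can be made to work, and the actual obstruction is of a different nature. First, your ``cleanest'' route via limit preservation is a dead end: $[C,-]\colon\cC^*_1\to\cV$ and $\mathrm{hom}_{\cV}(X,-)\colon\cV\to\cV$ both preserve limits (the latter is a right adjoint since $\cV$ is closed monoidal), so their composite does too; in fact the underlying $\cat{Set}$-valued functor \emph{is} representable for every $X$ and $C$, by an algebra $X\otimes_{\cat{Set}}C$ built as a coproduct of copies $C_x$ of $C$ subject to $\|a_x-a_{x'}\|\le d(x,x')$ on the unit ball. The failure is purely metric: the canonical bijection $\mathrm{hom}(X\otimes_{\cat{Set}}C,A)\cong\mathrm{hom}_{\cV}(X,[C,A])$ need not be an isometry, and that is what one must disprove. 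Your first route also collapses on a factual error: $[C,\bC]$ in $\cC^*_1$ is the space of unital $*$-homomorphisms $C\to\bC$, i.e.\ characters --- empty for $C=M_2(\bC)$ --- and the convex combinations $t\phi_0+(1-t)\phi_1$ you invoke are states, not morphisms of $\cC^*_1$; likewise a $C^*$-algebra can have a continuum of irreducible representations of each dimension, so no cardinality bound of that kind is available. The argument that actually works (the paper's): embed $C\subseteq A:=B(\cH)$, pick a unitary $u\in C$ and unitaries $v_\alpha\to 1$ in $B(\cH)$ not commuting with $u$, and let $f_{x,\alpha}$ be conjugation by $\varphi_\alpha(x)$ for a contraction $\varphi_\alpha\colon X\to\{$short unitary path from $1$ to $v_\alpha\}$ with $\varphi_\alpha(x_0)=1$, $\varphi_\alpha(x_1)=v_\alpha$. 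Each tuple $(f_{x,\alpha})_x$ is a point of $\mathrm{hom}_{\cV}(X,[C,A])$ converging uniformly to the inclusion, yet the induced maps on $X\otimes_{\cat{Set}}C$ do not converge uniformly on its unit ball, because the commutators $w_\alpha=v_\alpha uv_\alpha^*u^*=f_\alpha(u_{x_1}u_{x_0}^*)$ are non-scalar unitaries near $1$ whose powers $w_\alpha^n$ stay uniformly far from $1$ for suitable $n$. So the two metrics on the (set-theoretically identical) hom-sets disagree and no representing object in the enriched sense exists.

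A smaller but real error in your ``easy'' direction: a functor $\mathrm{hom}(T,-)$ that is constantly a singleton forces $T$ to be the \emph{initial} object of $\cC^*_1$, which is $\bC$ (every unital $C^*$-algebra receives a unique unital map from $\bC$), not the terminal object $\{0\}$ (which admits maps only to $\{0\}$, since unitality forces $1_D=0$). This resolves the tension you flagged in the $C=\bC$ case and gives $X\otimes\bC\cong\bC$ as stated.
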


Naturally, this implies

\begin{corollary}\label{cor:notens}
  The category $\cC^*_1$ of unital $C^*$-algebras is not tensored over $\cat{CMet}$.  \qedhere
\end{corollary}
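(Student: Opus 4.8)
The plan is to deduce the corollary immediately from \Cref{th:notensfull}. First recall what the statement asks: a $\cV$-enriched category $\cC$ is \emph{$\cV$-tensored} exactly when, for \emph{every} $X\in\cV$ and \emph{every} $C\in\cC$, the tensor $X\otimes C$ exists --- equivalently, when the $\cV$-functor $[X,[C,-]]:\cC\to\cV$ is representable. Hence, to see that $\cC^*_1$ is not $\cat{CMet}$-tensored, it suffices to produce a single pair $(X,C)$ for which $X\otimes C$ fails to exist.

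Such a pair is furnished by \Cref{th:notensfull} itself. Take $X={\bf 2}_{1}\in\cat{CMet}$, the two-point space with its points a finite distance $1$ apart (it is complete, hence a bona fide object of $\cat{CMet}$), and take $C=\bC^2\in\cC^*_1$, a unital $C^*$-algebra of dimension $2$. Then $X$ has cardinality $2$, so it is neither empty nor a singleton, while $\dim C=2>1$; thus none of the three alternatives in the ``if'' clause of \Cref{th:notensfull} holds, and we conclude that $X\otimes\bC^2$ does not exist in $\cC^*_1$. Therefore $\cC^*_1$ is not tensored over $\cat{CMet}$. (Plainly, any $X$ of cardinality $\ge 2$ together with any $C$ of dimension $\ge 2$ --- for instance $M_2(\bC)$ --- works just as well.)

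There is accordingly no genuine obstacle in the corollary itself; all the content resides in \Cref{th:notensfull}, which we are permitted to assume. For orientation only, a direct argument would run thus: an enriched tensor, were it to exist, would in particular represent the underlying $\cat{Set}$-valued functor $D\mapsto\cat{CMet}(X,[C,D])$, and for $X={\bf 2}_{2}$ and $C=\bC^2$ that functor \emph{is} representable --- by the coproduct $\bC^2\sqcup\bC^2$ in $\cC^*_1$ (the unital free product of two copies of $\bC^2$), since any two unital $*$-homomorphisms are at sup-distance $\le 2$, making the constraint imposed by ${\bf 2}_{2}$ vacuous. One would then need an iso $[\bC^2\sqcup\bC^2,D]\cong[{\bf 2}_{2},[\bC^2,D]]$ in $\cat{CMet}$, and this already fails for $D=M_2(\bC)$: send the two copies of $\bC^2$ to self-adjoint unitaries (reflections) $S$ and $T,T'$ with $\|T-T'\|$ arbitrarily small, so that the two resulting pairs of $*$-homomorphisms are arbitrarily close in $[{\bf 2}_{2},[\bC^2,M_2(\bC)]]$; yet on the unit-norm element $(ST)^n\in\bC^2\sqcup\bC^2$ the two induced maps $\bC^2\sqcup\bC^2\to M_2(\bC)$ differ in norm by $2$ once $n$ is chosen with $(ST')^n=-(ST)^n$. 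So the natural bijection is not even a homeomorphism, whence no enriched iso and no tensor. But, to repeat, this is precisely what \Cref{th:notensfull} delivers, and need not be reproduced here.
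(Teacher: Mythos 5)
Your proof is correct and matches the paper's (implicit) argument: \Cref{cor:notens} is an immediate consequence of \Cref{th:notensfull}, obtained by exhibiting any pair with $|X|\ge 2$ and $\dim C\ge 2$, exactly as you do with ${\bf 2}_1$ and $\bC^2$. The additional ``direct argument'' you sketch is unnecessary (and is essentially a special case of the paper's proof of \Cref{th:notensfull}), but since you explicitly present it as orientation only, it does not affect the validity of your deduction.
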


Some auxiliary notation will be useful in handling (possible) tensors over \cat{CMet}, both in $\cC^*_1$ and, later, in $\cC^*_{c,1}$. Note that the {\it \cat{Set}}-valued version of \Cref{eq:hvhc} is always representable, with $\cC$ either $\cC^*_1$ or $\cC^*_{c,1}$. In both cases the representing object, which we denote by
\begin{equation*}
  X\otimes_{\cat{Set}}C\in \cC = \cC^*_1\text{ or }\cC^*_{c,1},
\end{equation*}
can be constructed as follows:
\begin{itemize}
\item first form the coproduct of copies $C_x$ of $C$ indexed by $x\in X$;
\item and then impose the additional constraints
  \begin{equation*}
    \|a_x-a_{x'}\|\le d(x,x'),\ \forall a\in C_1:=\text{unit ball of }C
  \end{equation*}
  where $a_x\in C_x$ denotes the copy of $a\in C$.
\end{itemize}
We thus always have a canonical identification
\begin{equation}\label{eq:justsetsfunctor}
  \mathrm{hom}_{\cC}(X\otimes_{\cat{Set}} C,-)
  \cong
  \mathrm{hom}_{\cV}(X,\mathrm{hom}_{\cC}(C,-))
\end{equation}
of \cat{Set}-valued functors. Both sides are metric spaces, and the metric tensor product $X\otimes C\in \cat{CMet}$ will exist precisely when that canonical morphism is isometric. In that case, of course, we will have
\begin{equation}\label{eq:xcxc}
  X\otimes C\cong X\otimes_{\cat{Set}}C. 
\end{equation}
We take this discussion for granted in the sequel.

\pf{th:notensfull}
\begin{th:notensfull}
  That the tensor products are as described in the three listed cases is an easy check, so we focus on proving the converse: that as soon as
  \begin{equation*}
    \dim C\ge 2
    \quad\text{and}\quad
    |X|\ge 2
  \end{equation*}
  the metric tensor product \Cref{eq:xcxc} does not exist. To that end we will construct
  
  \begin{itemize}
  \item a net
    \begin{equation*}
      (f_{\alpha})_{\alpha} = (f_{x,\alpha},\ x\in X)_{\alpha}
    \end{equation*}
    of $X$-tuples of morphisms
    \begin{equation*}
      f_{x,\alpha}:C\to A
    \end{equation*}
    such that
    \begin{equation*}
      \|f_{x,\alpha}-f_{x',\alpha}\|_{\infty}\le d(x,x')\quad\text{on the unit ball }C_1\subset C;
    \end{equation*}    
  \item with each $(f_{x,\alpha})_{\alpha}$ converging, respectively, to some $f_x:C\to A$ uniformly in $x$ and on the unit ball $C_1$;
  \item but such that the morphisms 
    \begin{equation*}
      f_{\alpha}:X\otimes_{\cat{Set}}C\to A
    \end{equation*}
    respectively induced by $f_{x,\alpha}$ do {\it not} converge to the corresponding morphism
    \begin{equation*}
      f:X\otimes_{\cat{Set}}C\to A
    \end{equation*}
    induced by $f_x$ uniformly on the unit ball of $X\otimes_{\cat{Set}}C$.
  \end{itemize}
  This will then show that the identification \Cref{eq:justsetsfunctor} (evaluated at $A$) is only one of sets, and not a homeomorphism.

  We start with the $f_x:C\to A$ (which will later be the limits, respectively, of $(f_{x,\alpha})_{\alpha}$); they will all be equal to a fixed (unital) embedding
  \begin{equation*}
    \iota:C\to A:=B(\cH) = \text{bounded operators on a Hilbert space $\cH$};
  \end{equation*}
  one always exists, by the {\it Gelfand-Naimark theorem} (\cite[Corollari II.6.4.10]{blk}). We will typically suppress $\iota$ and identify $C$ with its realization inside $A=B(\cH)$.

  Because $C$ is assumed at least 2-dimensional, there must be a unitary $u\in C$ that fails to commute with some unitary $v_{\alpha}\in B(\cH)$. Furthermore, we can choose $v_{\alpha}$ to be arbitrarily close to 1, so that
  \begin{equation*}
    \|v_{\alpha}-1\|\xrightarrow[\quad\alpha\quad]{} 0
  \end{equation*}
  (the $\alpha$s are the elements of an otherwise unspecified directed poset).

  Fix distinct elements $x_{0,1}\in X$ (assumed to exist: $|X|\ge 2$). If $v_{\alpha}$ is sufficiently close to 1 then it is connectable to 1 by a short path of unitaries (\cite[Proposition 4.2.4 and its proof]{wo}), so in particular we can assume those paths are shorter than $\ell:=d(x_0,x_1)$. We can now
  \begin{itemize}
  \item map $\{x_0,x_1\}$ contractively to the endpoints of a segment $I_{\alpha}$ of length $\|v_{\alpha}-1\|$;
  \item extend that map to a contraction from $X$ to $I_{\alpha}$, given that the latter is an {\it injective} object in the category \cat{Met} \cite[Theorem 4.7]{kk};
  \item and further map $I_{\alpha}$ contractively onto a path $\gamma_{\alpha}$ of unitaries connecting $1$ and $v_{\alpha}$, so as to obtain a contraction
    \begin{equation}\label{eq:phia}
      \varphi_{\alpha}:X\to \gamma_{\alpha}\subset B(\cH),\quad \varphi_{\alpha}(x_0)=1,\ \varphi_{\alpha}(x_1)=v_{\alpha}. 
    \end{equation}
  \end{itemize}
  Finally, define
  \begin{equation*}
    f_{x,\alpha}:C\to B(\cH),\ f_{x,\alpha}:=\varphi_{\alpha}(x)\ \cdot\ \varphi_{\alpha}(x)^*.
  \end{equation*}
  In words, this is conjugation by the unitary $\varphi_{\alpha}(x)$; rescaling the metrics involved slightly if necessary we can assume that \Cref{eq:phia} were in fact $C$-contractive for small $C>0$, so that
  \begin{equation*}
    X\ni x\mapsto f_{x,\alpha}\in \mathrm{hom}_{\cC^*_1}(C,B(\cH))
  \end{equation*}
  is contractive for each $\alpha$ (the right-hand side being metrized as usual, uniformly on the unit ball of $C$).

  The $\varphi_{\alpha}$ take values close to 1 for large $\alpha$ by construction, so the $X$-uniform convergence
  \begin{equation*}
    (f_{x,\alpha})_{\alpha}\to f_x = \iota:C\to B(\cH)
  \end{equation*}
  follows. On the other hand though, consider the unitaries
  \begin{equation*}
    w_{\alpha} :=
    v_{\alpha}uv_{\alpha}^* u^* =
    f_{\alpha}(u_{x_1}u_{x_0}^*)
    \in f_{\alpha}(X\otimes_{\cat{Set}}C).
  \end{equation*}
  They are non-scalar and converge to 1 in norm, so by the norm-continuity of the spectrum for normal operators (e.g. \cite[Problem 105]{hlm-hs}) their spectra will be contained in a small but non-trivial arc around $1\in \bS^1\subset \bC$. It follows that no matter how large $\alpha_0$ and $n_0$ are, we can find
  \begin{equation*}
    \alpha\ge \alpha_0,\ n\ge n_0,\ w_{\alpha}^n\text{ uniformly far from }1. 
  \end{equation*}
  In other words, we cannot have convergence
  \begin{equation*}
    w_{\alpha}^n = f_{\alpha}\left((u_{x_1}u_{x_0}^*)^n\right)\longrightarrow f\left((u_{x_1}u_{x_0}^*)^n\right) = 1
  \end{equation*}
  uniformly in $n$; or again:
  \begin{equation*}
    f_{\alpha} \centernot\longrightarrow f
  \end{equation*}
  uniformly on the unit ball of $X\otimes_{\cat{Set}}C$.
\end{th:notensfull}

Contrast \Cref{th:notensfull} with its commutative version: \cite[Proposition 3.11]{ck} states that unlike $\cC^*_1$, the category of commutative (unital) $C^*$-algebras {\it is} tensored over \cat{CMet}.

\subsection{Finite presentability}\label{subse:fpres}

\cite[Proposition 5.19]{ar-ap} classifies those finite metric spaces that are (enriched) $\aleph_0$-generated in \cat{CMet} with respect to the class of isometries: they are precisely the discrete ones (i.e. those with infinite pairwise distances; see \Cref{se:prel}). As we will soon see, {\it assuming} finiteness is not necessary:

\begin{theorem}\label{th:fgen}
  The objects in \cat{CMet} isometry-$\aleph_0$-generated in the enriched sense are precisely the finite discrete metric spaces.
\end{theorem}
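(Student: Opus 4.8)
\emph{The soft implication, and a reduction.} If $X$ is finite discrete then every set map out of $X$ is non-expansive, so $[X,Y]\cong Y^{|X|}$ with the supremum metric for all $Y\in\cat{CMet}$. An $\aleph_0$-directed colimit of isometries in $\cat{CMet}$ is the completion of the union of the nested subspaces involved; since completion commutes with finite products and $\aleph_0$-directed colimits of sets commute with finite products, $Y\mapsto Y^{|X|}$ preserves such colimits, i.e.\ $X$ is enriched-$\aleph_0$-generated. For the converse, suppose $X$ is \emph{not} finite discrete; we must exhibit an $\aleph_0$-directed system of isometries $Y_n$ with colimit $Y:=\varinjlim Y_n$ in $\cat{CMet}$ and a non-expansive $f:X\to Y$ with $\inf_n d_{\sup}(f,[X,Y_n])>0$, so that $f\in[X,Y]$ but $f\notin\varinjlim^{\cat{CMet}}[X,Y_n]=\overline{\bigcup_n[X,Y_n]}$. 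Writing $X=\coprod_i X_i$ as the coproduct of its clopen components (maximal subsets of finite mutual distance, as in \Cref{res:dl}), we have $[X,-]\cong\prod_i[X_i,-]$ with supremum metric on the product. If there are infinitely many components, pick countably many of them $X_{i_k}$, send each $X_{i_k}$ to the constant value $k\in[0,\infty)=\varinjlim_n[0,n]$ and everything else to $0$; this non-expansive $f:X\to[0,\infty)$ is at supremum distance $\infty$ from every map factoring through some $[0,n]$, and we are done. A finite product of colimit-preserving functors is colimit-preserving (same argument as above), so if there are finitely many components it suffices to treat each one, and we may assume $X$ has all pairwise distances finite and at least two points.

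\emph{The tail construction.} Assume first that $X$ has two \emph{isolated} points $x_0\ne x_1$ (automatic when $X$ is finite); put $\delta:=d(x_0,x_1)$. Let $Y$ be obtained from $X$ by deleting $x_1$, adjoining a fresh point $q$ with $d(q,x)=d(x_1,x)$ for $x\in X\setminus\{x_1\}$, and adjoining points $q_n$ ($n\ge 1$) with $d(q_n,q)=\delta/n$ and $d(q_n,x)=d(x,x_1)+\delta/n$; the triangle inequality is easily checked, and since every identification used here is along isometries (in particular $1$-expansive maps), \Cref{th:largepush} ensures $Y$ is complete and the evident stage-inclusions are isometric. Put $Y_n:=Y\setminus(\{q\}\cup\{q_m:m>n\})$; then $\bigcup_n Y_n=Y\setminus\{q\}$ has completion $Y$, so $Y=\varinjlim^{\cat{CMet}}_n Y_n$. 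Let $f:X\hookrightarrow Y$ be the isometric embedding sending $x_1$ to $q$ and fixing everything else. Fix $c>0$ below $\delta$ and below the isolation radii of $x_0$ and $x_1$. If $g:X\to Y$ is non-expansive with $d_{\sup}(f,g)<c$ and $g$ factors through some $Y_n$, then $g(x_0)$ lies within $c$ of $x_0$, forcing $g(x_0)=x_0$ (since $d(q_m,x_0)=\delta+\delta/m>\delta>c$ and $x_0$ is isolated); and $g(x_1)$ lies within $c$ of $q$ and inside $Y_n$, forcing $g(x_1)=q_m$ for some $m$ ($q$ is excluded, $x_1$ is isolated, so nothing else is that close); but then $d_Y(g(x_0),g(x_1))=\delta+\delta/m>\delta=d_X(x_0,x_1)$, contradicting non-expansiveness. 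Hence $d_{\sup}(f,[X,Y_n])\ge c$ for all $n$, and $[X,-]$ does not preserve this colimit. This already settles all finite, non-discrete $X$, recovering \cite[Proposition 5.19]{ar-ap}.

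\emph{The remaining cases.} What is left is infinite $X$ with all distances finite and at most one isolated point; removing that (at most one) isolated point leaves a nonempty perfect complete subspace $X'$ (an isolated point of $X'$ would still be isolated in $X$). For such $X$ one again builds a bad colimit by gluing, but the gadget depends on the geometry. If $X$ admits a clopen decomposition into two parts at positive distance, this yields a $1$-Lipschitz retraction $X\to{\bf 2}_{\delta}$; since a retract of a functor preserving $\aleph_0$-directed colimits of isometries preserves the same colimits (a retract of a colimit cocone is again a colimit cocone), ${\bf 2}_{\delta}$ would then be enriched-$\aleph_0$-generated, which the tail construction refutes. Otherwise $X$ is "connected enough'' that a suitable closed separating set $Z\subsetneq X$ exists; then with $Y_n:=\{x\in X: d(x,Z)\ge 1/n\}$ and $Y:=\varinjlim^{\cat{CMet}}_n Y_n$ (the completion of $X\setminus Z$, which equals $X$ since $Z$ has empty interior), the inclusion $f:X\to Y$ is bad: a $1$-Lipschitz, hence continuous, map factoring through $Y_n$ has connected image inside a single component of $Y_n$, and these components are at distance $\ge 2/n$, so it cannot stay uniformly close to $f=\id_X$ (this is exactly what happens for $X=[0,\ell]$, where $Z=\{\ell/2\}$, and for topologically connected $X$ generally). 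Completeness and isometry of all the glued spaces are supplied throughout by \Cref{th:tree} and \Cref{th:largepush} with $C=1$.

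\emph{Expected main obstacle.} The easy direction and the tail construction are routine. The real work is organizational: one must show that \emph{every} infinite complete metric space with finite distances and at least two points is caught by exactly one of the above gadgets, and in particular must make the last, "separating-set'' alternative precise --- which closed set $Z$ to delete for a perfect, possibly totally disconnected $X$ (where one may need $Z$ to be more than a single point, as already for the circle or for totally disconnected perfect spaces, in the latter of which one instead extracts the clopen retraction onto ${\bf 2}_{\delta}$), and why the finite stages $Y_n$ genuinely separate around it. I expect this case analysis, rather than any individual construction, to be the crux.
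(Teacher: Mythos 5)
Your reduction to finitely many finite-distance components is fine, and your ``tail construction'' is sound --- it even gives a self-contained proof that ${\bf 2}_{\delta}$ with $\delta<\infty$ (and, more generally, any space with two isolated points a finite distance apart) fails to be isometry-$\aleph_0$-generated, where the paper simply cites \cite[Proposition 5.19]{ar-ap}. The gap is exactly where you predicted it: the ``remaining cases'' paragraph is not a proof. The trichotomy (two isolated points / clopen separation at positive distance / a ``suitable closed separating set $Z$'') is neither made precise nor shown to be exhaustive, and two of its branches fail as stated. In the clopen-separation branch, ${\bf 2}_{\delta}$ is a retract of $X$ in \cat{CMet} only if the distance between the two clopen pieces is actually \emph{attained} by a pair of points: non-expansiveness of the section $\iota$ forces $d(\iota p,\iota p')\le\delta$ while non-expansiveness of $r$ forces $\delta\le d(\iota p,\iota p')$, and for non-compact $X$ the infimum distance between disjoint closed sets need not be attained. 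In the separating-set branch you give no recipe for $Z$ beyond topologically connected examples, and for a perfect, totally disconnected, non-totally-bounded space neither branch obviously applies.

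The single observation that collapses this entire case analysis --- and which your proposal is missing --- is to test $\aleph_0$-generation against the directed system of \emph{finite subspaces of $X$ itself}: $X=\varinjlim(F,d)$ over finite $F\subseteq X$ is an $\aleph_0$-directed colimit of isometries, so enriched generation forces $\id_X$ to be uniformly $\varepsilon$-approximable by contractions $X\to F$, i.e.\ $X$ is totally bounded, hence compact. Once $X$ is compact with at least two points, any such approximant $\pi:X\to F$ (for $\varepsilon$ small) has non-singleton image and clopen fibres, compactness guarantees that the minimal distance $\ell>0$ between two fibres is attained, and this yields a contractive retraction $X\to{\bf 2}_{\ell}$, reducing everything to the two-point case your tail construction already handles. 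Keeping your tail construction as the base case and inserting this compactness step in place of the trichotomy would complete the argument.
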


A number of preliminary observations will simplify the main line of attack. The following terminology will be useful.

\begin{definition}
  The {\it finite-metric components} (or just plain `components', when context permits it) of a metric space are the maximal subspaces on which the metric takes finite value.
\end{definition}

Clearly, an object of \cat{CMet} is the coproduct of its finite-distance components (\cite[Exercise 1.1.2 and discussion preceding it]{bbi}); the same goes for \cat{Met}.

\begin{lemma}\label{le:fincomp}
  An isometry-$\aleph_0$-generated object in \cat{CMet} has finitely many components.
\end{lemma}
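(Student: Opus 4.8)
The plan is to prove the contrapositive: if $X\in\cat{CMet}$ has infinitely many finite-metric components, then $[X,-]$ fails to preserve some $\aleph_0$-directed colimit of isometries, so $X$ is not isometry-$\aleph_0$-generated in the enriched sense. Write $X=\coprod_{i\in I}X_i$ for the decomposition into components (a coproduct in $\cat{CMet}$, as noted above), with $I$ infinite. The colimit I would use is $X$ itself, presented as $\varinjlim_{F}X_F$ with $X_F:=\coprod_{i\in F}X_i$ over the $\aleph_0$-directed poset of finite subsets $F\subseteq I$, the transition maps being the isometric inclusions $X_F\hookrightarrow X_{F'}$ for $F\subseteq F'$. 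Since a disjoint union with infinite cross-distances of complete spaces is complete, this colimit is already complete, hence is computed identically in $\cat{Met}$ and $\cat{CMet}$ and equals $X$.

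I would then examine $[X,-]$ on this diagram. Post-composition with an isometric embedding is again an isometric embedding for the $d_{\sup}$-metrics, so the transition maps $[X,X_F]\hookrightarrow[X,X_{F'}]$ are isometric embeddings; concretely $[X,X_F]$ is the set of non-expansive self-maps $f$ of $X$ with $f(X)\subseteq X_F$, carrying $d_{\sup}$, and it sits isometrically inside $[X,X]$. Consequently $\varinjlim_F[X,X_F]$, computed in $\cat{CMet}$, is the completion of the increasing union $\bigcup_F[X,X_F]\subseteq[X,X]$, and the canonical comparison morphism $\Phi\colon\varinjlim_F[X,X_F]\to[X,X]$ is the continuous extension of this inclusion, so its image is precisely the $d_{\sup}$-closure of $\bigcup_F[X,X_F]$ inside $[X,X]$.

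The point is then that $\id_X$ lies at infinite distance from $\bigcup_F[X,X_F]$: for any finite $F$ one has $I\setminus F\ne\emptyset$, so choosing $x\in X_i$ with $i\notin F$ and any $f\colon X\to X_F$, the values $f(x)$ and $x$ lie in distinct components, whence $d(f(x),x)=\infty$ and therefore $d_{\sup}(f,\id_X)=\infty$. Thus $\id_X$ is not in the closure of $\bigcup_F[X,X_F]$, $\Phi$ is not surjective, and so $[X,-]$ does not preserve the colimit $X=\varinjlim_F X_F$. I do not expect a genuine obstacle here; the only delicate point is the bookkeeping for colimits of isometric embeddings in $\cat{CMet}$ (which require completing in general, but here everything in sight, including the identification of the colimit with $X$, goes through cleanly), together with the convenient fact that $d_{\sup}$ is permitted to take the value $\infty$ --- precisely what makes the last step work. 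One could alternatively run the same argument for the non-enriched notion, where it becomes even shorter: $\mathrm{hom}(X,-)$ preserving $\varinjlim_F X_F$ would put $\id_X$ in $\bigcup_F\cat{CMet}(X,X_F)$, forcing $X=X_F$ for some finite $F$.
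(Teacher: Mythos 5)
Your proposal is correct and follows essentially the same route as the paper: present $X$ as the $\aleph_0$-directed colimit of the unions $X_F$ of finitely many components along isometric inclusions, and observe that $\id_X$ sits at infinite $d_{\sup}$-distance from every morphism $X\to X_F$ with $F$ proper, so it cannot lie in the image (closure of the union) of the comparison map. The paper's proof is just a terser statement of the same argument; your extra bookkeeping about the colimit being the completion of the increasing union is accurate and harmless.
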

\begin{proof}
  Every space $(X,d)$ is the directed union of its subspaces $X_{\cF}$, unions of finite families $\cF$ of $X$-components. $\aleph_0$-generation then requires that $X$ be equal to one of the $X_{\cF}$, for otherwise the identity $X\to X$ would not be approximable by a morphism $X\to X_{\cF}$.
\end{proof}

\Cref{le:fincomp} reduces the problem to finite coproducts. Note next that it is enough to consider the individual components themselves.

\begin{lemma}\label{le:compenough}
  Let $X_i\in \cat{CMet}$, $1\le i\le n$ be a finite family of objects. The coproduct
  \begin{equation*}
    X:=\coprod_{i=1}^n X_i
  \end{equation*}
  is isometry-$\aleph_0$-generated in the enriched sense if and only if the $X_i$ are.
\end{lemma}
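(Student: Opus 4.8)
The plan is to prove both implications by unwinding what enriched isometry-$\aleph_0$-generation means for a coproduct versus its summands, using that the enriched hom $[X,-]$ out of a finite coproduct $X=\coprod_i X_i$ decomposes as a product $[X,-]\cong\prod_i [X_i,-]$ in \cat{CMet}, and that the relevant colimits are filtered colimits of isometries (hence behave well with respect to both coproducts and finite products). Concretely, let $D:I\to\cat{CMet}$ be an $\aleph_0$-directed diagram of isometries with colimit $Y=\varinjlim D$ (the colimit computed in \cat{CMet}, which by the earlier completeness results agrees with the \cat{Met}-colimit along such diagrams). The comparison map we must analyze is
\begin{equation*}
  \varinjlim_{j\in I}[X,D(j)]\longrightarrow [X,Y],
\end{equation*}
and similarly with $X_i$ in place of $X$; being enriched isometry-$\aleph_0$-generated means this comparison map is an isomorphism in \cat{CMet} (equivalently an isometric bijection) for every such $D$.

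For the ``if'' direction, assume each $X_i$ is enriched isometry-$\aleph_0$-generated. Given $D$ as above, for each $i$ the map $\varinjlim_j [X_i,D(j)]\to[X_i,Y]$ is an isometric bijection. Now I would observe that a finite product of isometric bijections is an isometric bijection when the metric on the product is the sup-metric (which is exactly the metric carried by $[\coprod_i X_i,-]$, since a non-expansive map out of $\coprod_i X_i$ is just a tuple of non-expansive maps with the sup-distance between tuples), and that in the $\aleph_0$-directed (indeed filtered) setting the canonical map $\varinjlim_j\prod_i[X_i,D(j)]\to\prod_i\varinjlim_j[X_i,D(j)]$ is an isomorphism because finite products commute with filtered colimits in \cat{Set} and, here, isometrically so: an element of the colimit of the products is represented at some stage $j$ by a tuple, and the sup-distance is computed stagewise and passes to the colimit because the transition maps are isometries. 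Chaining these identifications with $[\coprod_i X_i, D(j)]\cong\prod_i[X_i,D(j)]$ and $[\coprod_i X_i,Y]\cong\prod_i[X_i,Y]$ gives that $\varinjlim_j[X,D(j)]\to[X,Y]$ is an isometric bijection, as desired.

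For the ``only if'' direction, assume $X=\coprod_i X_i$ is enriched isometry-$\aleph_0$-generated and fix an index $i_0$; I want each $X_{i_0}$ to be as well. Given an $\aleph_0$-directed diagram $D:I\to\cat{CMet}$ of isometries, form the new diagram $D':I\to\cat{CMet}$ with $D'(j):=D(j)\sqcup\coprod_{i\ne i_0}X_i$ (using constant coordinates on the extra summands and the original transition maps on $D(j)$); the transition maps of $D'$ are again isometries, $I$ is still $\aleph_0$-directed, and $\varinjlim_j D'(j)=Y\sqcup\coprod_{i\ne i_0}X_i$ with $Y=\varinjlim_j D(j)$. Applying the hypothesis to $D'$ and using the product decompositions of the enriched homs out of $X$, together with the filtered-colimit/finite-product interchange from the previous paragraph, yields that the $i_0$-coordinate of the comparison map, namely $\varinjlim_j[X_{i_0},D(j)]\to[X_{i_0},Y]$, is an isometric bijection. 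Hence $X_{i_0}$ is enriched isometry-$\aleph_0$-generated.

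The main obstacle I anticipate is bookkeeping the metric (not just the underlying set) through the colimit-product interchange: one must check that the sup-metric on a filtered colimit of finite products is computed stagewise, which is where the hypothesis that the transition maps are \emph{isometries} (rather than merely non-expansive) is essential — without it the sup-distance could drop strictly in the colimit and the decomposition $[\coprod_i X_i,-]\cong\prod_i[X_i,-]$ would not be compatible with the colimit comparison maps isometrically. Everything else (the decomposition of non-expansive maps out of a coproduct, the identification of the relevant \cat{Met}-colimits with \cat{CMet}-colimits along diagrams of isometries via the earlier automatic-completeness results) is routine.
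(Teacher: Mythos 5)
Your argument is correct and is essentially the paper's: both rest on the decomposition $[\coprod_i X_i,-]\cong\prod_{i=1}^n[X_i,-]$ (with the sup metric) together with the fact that finite products commute, isometrically, with directed colimits of isometries, reducing the comparison map for $X$ to the product of the comparison maps for the $X_i$. The one divergence is your ``only if'' direction: the paper simply observes that this product of comparison maps is an isomorphism iff each factor is, whereas your detour through the augmented diagram $D'(j)=D(j)\sqcup\coprod_{i\ne i_0}X_i$ is unnecessary and slightly misidentifies the $i_0$-factor (it is $\varinjlim_j[X_{i_0},D'(j)]\to[X_{i_0},Y\sqcup\coprod_{i\ne i_0}X_i]$, not $\varinjlim_j[X_{i_0},D(j)]\to[X_{i_0},Y]$, so one must still cut down to the maps landing in the $D(j)$-summand), though this is easily repaired.
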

\begin{proof}
  This is almost entirely formal. If
  \begin{equation*}
    Y=\varinjlim_{\alpha} Y_{\alpha}
  \end{equation*}
  is a directed colimit of isometries, then the canonical morphism
  \begin{equation*}
    \varinjlim_{\alpha} \mathrm{hom}(X,Y_{\alpha})\to \mathrm{hom}(X,Y)
    \quad\text{in}\quad
    \cat{CMet}
  \end{equation*}
  is
  \begin{align*}
    \varinjlim_{\alpha} \mathrm{hom}(X,Y_{\alpha}) &\cong \varinjlim_{\alpha}\prod_{i=1}^n \mathrm{hom}(X_i,Y_{\alpha})\\
                                                   &\cong \prod_{i=1}^n \varinjlim_{\alpha} \mathrm{hom}(X_i,Y_{\alpha})
                                                     \quad{(\text{see below})}\\
                                                   &\to \prod_{i=1}^n \mathrm{hom}(X_i,Y)\\
                                                   &\cong \mathrm{hom}(X,Y).
  \end{align*}
  The second isomorphism is the commutation of finite products and directed colimits of isometries in \cat{CMet}, which is vary similar to the analogous statement in the category of sets (\cite[\S IX.2, Theorem 1]{mcl}), and admits a parallel proof.

  This is an isomorphism if and only if the individual components 
  \begin{equation*}
    \varinjlim_{\alpha} \mathrm{hom}(X_i,Y_{\alpha})\to \mathrm{hom}(X_i,Y)
  \end{equation*}
  are, hence the conclusion.
\end{proof}

Consequently:

\begin{corollary}\label{cor:fdistenough}
  The isometry-$\aleph_0$-generated objects in $\cat{CMet}$ are those that
  \begin{itemize}
  \item have finitely many finite-distance components;
  \item all of which are themselves isometry-$\aleph_0$-generated.
  \end{itemize}
\end{corollary}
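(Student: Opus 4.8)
The plan is to read \Cref{cor:fdistenough} off directly from \Cref{le:fincomp} and \Cref{le:compenough}: together those two lemmas already carry all the content, and the corollary is pure bookkeeping.

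For the ``only if'' direction, suppose $X\in\cat{CMet}$ is isometry-$\aleph_0$-generated in the enriched sense. Then \Cref{le:fincomp} immediately gives that $X$ has only finitely many finite-distance components, so we may write $X=\coprod_{i=1}^n X_i$ with the $X_i$ those components. Since this exhibits $X$ as a \emph{finite} coproduct, \Cref{le:compenough} applies and forces each summand $X_i$ to be isometry-$\aleph_0$-generated as well.

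For the ``if'' direction, suppose $X$ has finitely many finite-metric components $X_1,\dots,X_n$, each of them isometry-$\aleph_0$-generated. As noted right after the definition of finite-metric components, $X=\coprod_{i=1}^n X_i$ is genuinely a coproduct in $\cat{CMet}$ (the coproduct being the disjoint union with infinite cross-distances); being a finite coproduct of isometry-$\aleph_0$-generated objects, it is itself isometry-$\aleph_0$-generated by \Cref{le:compenough}.

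There is no real obstacle here; the only point worth keeping in mind is that the decomposition into finite-metric components must be recognized as an honest categorical coproduct in $\cat{CMet}$, so that \Cref{le:compenough} is applicable in both directions — and this was recorded earlier in the discussion. If one prefers, the upshot can be phrased as: the classification of the isometry-$\aleph_0$-generated objects of $\cat{CMet}$ is hereby reduced to the single-component (i.e. everywhere-finite-distance) case, which is what the remainder of the argument addresses.
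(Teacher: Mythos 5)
Your proposal is correct and matches the paper's argument exactly: the paper proves this corollary simply by citing \Cref{le:fincomp} and \Cref{le:compenough}, and your write-up just spells out the same bookkeeping in both directions.
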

\begin{proof}
  Immediate from \Cref{le:fincomp,le:compenough}. 
\end{proof}

\pf{th:fgen}
\begin{th:fgen}
  \Cref{cor:fdistenough} reduces the problem to showing that a non-empty finite-distance $\aleph_0$-generated object $(X,d)$ (fixed throughout the sequel) must be a singleton. The proof proceeds in a number of stages.

  \begin{enumerate}[(1)]
  \item\label{item:20} {\bf $X$ is compact.} Given that it is complete by assumption, this will follow from \cite[Theorem 45.1]{mnk} as soon as we show that $X$ is {\it totally bounded} \cite[\S 45, Definition preceding Example 1]{mnk}: for every $\varepsilon$, $X$ can be covered with finitely many $\varepsilon$-radius balls. To see this, note that $X$ is the directed union of its finite subspaces (equipped with the restricted metric)
    \begin{equation*}
      (X,d) = \varinjlim (F,d),\ F\subseteq X\ \text{finite}. 
    \end{equation*}
    Finite generation in the present (enriched) context means that for every $\varepsilon>0$ the identity $X\to X$ is uniformly $\varepsilon$-approximable by a contraction $X\to F$, whence every point is within $\varepsilon$ of one of the finitely many elements of $F$.
    
  \item {\bf $X$ is a singleton.} Suppose not. We can then fix $\varepsilon>0$ sufficiently small that (by finite generation) the identity $X\to X$ is uniformly $\varepsilon$-close to a contraction
    \begin{equation*}
      X\xrightarrow[]{\quad\pi\quad} F\subseteq X
    \end{equation*}
    factoring through some non-singleton finite $F\subseteq X$. The fibers
    \begin{equation*}
      X_p:=\pi^{-1}(p),\ p\in F
    \end{equation*}
    are {\it clopen} (i.e. closed and open) and partition $X$; denote by $\ell>0$ the smallest distance between two of them, say $X_{p}$ and $X_q$. By compactness that distance is actually achieved:
    \begin{equation*}
      \ell=d(X_p,X_q):=\inf\{d(x,y)\ |\ x\in X_p,\ y\in X_q\}
    \end{equation*}
    is in fact a minimum:
    \begin{equation*}
      d(x,y)=\ell\quad\text{for some}\quad x\in X_p,\ y\in X_q. 
    \end{equation*}
    The embedding $\iota:{\bf 2}_{\ell}\to X$ sending the two points to $x$ and $y$ now admits a contractive retraction (i.e. left inverse) $r:X\to {\bf 2}_{\ell}$ sending $X_p$ to one of the points and everything else to the other point.

    That the $\aleph_0$-generation property survives under retractions is a simple exercise, so ${\bf 2}_{\ell}$ must be isometry-$\aleph_0$-generated. This, though, contradicts \cite[Proposition 5.19]{ar-ap}.
  \end{enumerate}
  This finishes the proof.
\end{th:fgen}

The precise analogue of \Cref{th:fgen} holds in \cat{CPMet}:

\begin{theorem}\label{th:fgenp}
  The objects in \cat{CPMet} isometry-$\aleph_0$-generated in the enriched sense are precisely the finite discrete metric spaces.
\end{theorem}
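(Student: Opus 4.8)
The plan is to follow the skeleton of the proof of \Cref{th:fgen}, adapting only the two steps where \cat{CPMet} genuinely differs from \cat{CMet}. For the ``if'' direction, a finite discrete space is a finite coproduct of copies of the one-point space $\mathbf 1$; since $[\mathbf 1,-]_{\cat{CPMet}}$ is naturally isomorphic to the identity functor (by \Cref{cor:cpclosed}, $[\mathbf 1,Y]_{\cat{CPMet}}$ is the intrinsic coreflection of $[\mathbf 1,Y]_{\cat{CMet}}\cong Y$, and $Y$ is already intrinsic), $\mathbf 1$ is enriched isometry-$\aleph_0$-generated. The coproduct step is the exact analogue of \Cref{le:compenough}, using $[\coprod_i X_i,-]_{\cat{CPMet}}\cong\prod_i[X_i,-]_{\cat{CPMet}}$ (an internal hom carries coproducts to products, since $-\otimes X$ is a left adjoint by \Cref{cor:cpclosed}) together with the commutation of finite products and $\aleph_0$-directed colimits of isometries, which holds in \cat{CPMet} because by \Cref{th:cpcomon} both are computed as in \cat{CMet}.

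For the ``only if'' direction I would first reduce to a single finite-distance component, imitating \Cref{le:fincomp,le:compenough}. Writing $X=\coprod_c X_c$ as the coproduct of its finite-distance components and $X=\varinjlim_{\cF}X_{\cF}$ over finite subfamilies $\cF$, the transition inclusions $X_{\cF}\hookrightarrow X_{\cF'}$ are isometries, and \Cref{th:cpcomon} makes this a colimit in \cat{CPMet}. The one new point is that the induced maps $[X,X_{\cF}]_{\cat{CPMet}}\to[X,X_{\cF'}]_{\cat{CPMet}}$ are again isometries: the contractions $X\to X_{\cF}$ form a clopen subset of $\cat{CMet}(X,X_{\cF'})$ in the supremum metric (since $X_{\cF'}$ has finitely many components, pairwise at infinite distance), so a continuous path of contractions into $X_{\cF'}$ that starts at a contraction into $X_{\cF}$ never leaves $\cat{CMet}(X,X_{\cF})$, whence the intrinsic sup-metrics agree on the relevant subset. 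With that in hand the argument of \Cref{le:fincomp} applies: $\mathrm{id}_X$ is at infinite $d_{\sup}$-, hence $d_{\sup,\ell}$-distance from every contraction factoring through finitely many components, so unless $X$ itself has finitely many components the comparison map cannot be surjective. The analogue of \Cref{le:compenough} then reduces the problem to showing that a finite-distance enriched isometry-$\aleph_0$-generated object of \cat{CPMet} is a singleton.

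The finite-distance case is the crux, and here the \cat{CMet} strategy breaks down: a finite-distance path metric space is path-connected, hence admits no non-constant contraction onto a finite discrete space, so there is no clopen decomposition to retract along and no reduction to $\mathbf 2_\ell$. Instead I would construct, for a hypothetical non-singleton such $X$ with chosen $p\ne q$ at distance $\ell:=d_X(p,q)<\infty$, an explicit $\aleph_0$-directed colimit of isometries in \cat{CPMet} that $[X,-]_{\cat{CPMet}}$ fails to preserve: glue onto $X$ a countable family of ``teeth'' $T_n$ (short metric segments) of lengths shrinking to $0$, positioned so that their free endpoints form a Cauchy sequence accumulating at a point of $X$ while staying an honest distance at least $\ell$ from $p$. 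By \Cref{th:largepush} \Cref{item:23} each finite stage $Y_n:=X\cup T_1\cup\cdots\cup T_n$ is complete and intrinsic, the inclusions $Y_n\hookrightarrow Y_{n+1}$ are isometries, and the union is already complete, so this is a colimit in \cat{CPMet} (one can also view it as a colimit along a tree of bounded diameter and invoke \Cref{th:tree}). A suitably chosen contraction $X\to\varinjlim Y_n$ — one whose values near the accumulation point encode the limiting configuration — then fails to be a $d_{\sup,\ell}$-limit of contractions factoring through any single $Y_n$, since the finite stages lack the needed configurations near the accumulation point, exactly as the distance-$\ell$ pair is unreachable in the proof that $\mathbf 2_\ell$ is not isometry-$\aleph_0$-generated in \cat{CMet} (\cite[Proposition 5.19]{ar-ap}). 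The main obstacle I anticipate is precisely pinning down the geometry of the teeth so that this last failure is genuine for the \emph{intrinsic} sup-metric on the enriched hom — where, a priori, shortcuts through completion points could rescue an approximation — together with the completeness bookkeeping for the stages via \Cref{th:largepush} and \Cref{th:tree}.
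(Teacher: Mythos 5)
Your ``if'' direction and the reduction to a single finite-distance component are fine and consistent with the paper's (terse) treatment of those steps. The gap is in the crux, and it is exactly the obstacle you flag at the end: with shrinking teeth the ``shortcut'' rescue is unavoidable. If each finite stage $Y_n=X\cup T_1\cup\cdots\cup T_n$ contains $X$ isometrically and the teeth $T_m$ have lengths $\epsilon_m\to 0$, define $H_t\colon Y\to Y$ to slide every point of every tooth $T_m$ with $m>n$ a distance $t$ toward its attachment point (clamping there). Each $H_t$ is a contraction (distances between points on distinct teeth, or between a tooth and $X$, are sums through the attachment points, so they only decrease), $t\mapsto H_t\circ f$ is a $1$-Lipschitz path in $(\cat{CMet}(X,Y),d_{\sup})$ of length $\sup_{m>n}\epsilon_m$, and its endpoint factors through $Y_n$. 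Hence \emph{every} contraction $f\colon X\to Y$ lies within intrinsic distance $\sup_{m>n}\epsilon_m\to 0$ of the image of $[X,Y_n]$, so no morphism of the kind you describe can exist and your diagram furnishes no contradiction. (Note also that shrinking teeth whose free endpoints are Cauchy necessarily accumulate at a point of $X$, so the colimit adds no new points whatsoever.)

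The paper's construction is structurally opposite: it arranges that $X$ itself lies in \emph{no} finite stage. One glues a unit segment $\gamma_x$ to each $x\in X$, joins points on distinct segments by dedicated geodesics $\gamma_{p,q}$, and lets $Y_n$ consist only of the points of the $\gamma_x$ at distance $\ge 1/n$ from $X$ together with the connectors between them; $X$ reappears only in the colimit $Y$. The non-approximable morphism is then the isometric embedding $\iota\colon X\to Y$ itself: a contraction $X\to Y_n$ that were $d_{\sup,\ell}$-close to $\iota$ would carry a path in $X$ joining two points $x\ne x'$ to a path in $Y_n$ joining points near $x$ and near $x'$, and any such path in $Y_n$ must traverse a connector of length bounded below, hence pass through a point a definite distance from $X$, contradicting uniform closeness. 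To salvage your approach you would have to redesign the teeth so that the finite stages exclude (rather than contain) the relevant part of $Y$; as written, the construction proves nothing.
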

\begin{proof}
  The \cat{CPMet} version of \Cref{cor:fdistenough} goes through just as easily, so we are again reduced to showing that a non-empty finite-distance $(X,d_X)\in \cat{CPMet}$ is isometry-$\aleph_0$-generated only if it is a singleton (the `if' implication being obvious).

  The proof strategy for this last claim will be very different, as most devices employed in the proof of \Cref{th:fgen} are absent here (we cannot work with finite spaces, etc., since every metric in sight must be intrinsic). We instead proceed to construct a non-approximable morphism
  \begin{equation}\label{eq:iotaxys}
    \iota:(X,d_X)\to (Y,d:=d_Y)\cong \varinjlim (Y_n,d_Y|_{Y_n})
  \end{equation}
  for (isometrically embedded) $Y_n\subset (Y,d)$ as follows.
  \begin{itemize}
  \item First, attach a metric segment $\gamma_x$ of length 1 to every point $x\in X$. This attachment occurs only at a single endpoint of $\gamma_x$ (which then becomes identified with $x$); the other endpoint is, say, $p_x$.

    Denote this (intermediate) space by $(Z,d_Z)$. It is complete by \Cref{th:tree}, as in the proof of \Cref{pr:cvxcmpl}. It is also of course a path metric space, being a gluing of such \cite[discussion following Exercise 3.1.13]{bbi}.
  \item Next, connect any two newly-added points
    \begin{equation*}
      p\in \gamma_x,\ q\in \gamma_y,\ x\ne y\in X
    \end{equation*}
    on {\it distinct} segments $\gamma_x$ and $\gamma_y$ with a metric segment $\gamma_{p,q}$ of length $d_Z(p,q)$. The result (once more a complete path metric space) will be our $(Y,d_Y)$.
  \item This all falls within the scope of \Cref{th:largepush} with $C=1$: first gluing along points to produce $Z$, and then gluing along isometrically embedded two-point spaces. It follows from the selfsame \Cref{th:largepush} that the various component spaces (the original $(X,d)$, the segments $\gamma_x$ and the $\gamma_{p,q}$) all embed isometrically into the end result $(Y,d_Y)$.

    We henceforth refer to the single ambient distance $d_Y$ as $d$.
  \item As just noted, the initial space $(X,d_X)$ embeds isometrically into $(Y,d)$; that identification is the map \Cref{eq:iotaxys}.
  \item It remains to identify the $Y_n\subseteq Y$, which we index by positive integers $n\in \bZ_{>0}$. By definition, $Y_n$ consists of
    \begin{itemize}
    \item the points on the partial segments
      \begin{equation*}
        \gamma_{n,x}:=\left\{p\in \gamma_x\ |\ d(p,X) = d(p,x)\ge \frac 1n\right\},\ x\in X;
      \end{equation*}
    \item and the points on those $\gamma_{p,q}$ that connect these:
      \begin{equation*}
        \gamma_{p,q}
        \quad\text{for}\quad
        p\in \gamma_{n,x},\ q\in \gamma_{n,y},\ x\ne y. 
      \end{equation*}
    \end{itemize}
  \end{itemize}
  By construction, the $Y_n$ are path metric spaces: points on the same $\gamma_{n,x}$ are already on a metric segment ($\gamma_{n,x}$ itself), while those on distinct $\gamma_{n,x_{0,1}}$, $x_0\ne x_1$ are connected by their own dedicated metric segment (one of the $\gamma_{p,q}$). It is also clear that $Y$ is the directed colimit (in \cat{Met}, or \cat{CMet}, or \cat{CPMet}) of the $Y_n$, as the near endpoints of the $\gamma_{n,x}$ respectively approach $x\in X$ uniformly in $n$.

  It remains to argue that if $X$ has at least two points, then $\iota:X\to Y$ is not arbitrarily approximable in the path-metric sense by morphisms into the $Y_n$.

  Consider two points $x\ne x'\in X$. Since the latter is intrinsic, there is a path
  \begin{equation*}
    \gamma:\left(I:=[0,1]\right)\to X
  \end{equation*}

  connecting $x$ and $x'$ (not necessarily a contraction). We will show that there cannot be, for arbitrarily small $\varepsilon$, paths $\gamma_n:I\to Y_n$ connectable to $\gamma$ by a path in
  \begin{equation*}
    \cat{Cont}(I\to Y):=\text{continuous maps $I\to Y$}
  \end{equation*}
  of length $<\varepsilon$. Indeed, if $\gamma_n:I\to Y_n$ is uniformly close to $\gamma$ then the endpoints of $\gamma_n$ must lie on $\gamma_{n,p}$ and $\gamma_{n,q}$ respectively, for small
  \begin{equation*}
    d(x,p)\text{ and }d(y,q). 
  \end{equation*}
  In particular, this imposes a uniform positive lower bound on the distance $d(p,q)$, and hence a uniform lower bound $\ell>0$ on the lengths of the segments $\gamma_{p,q}$ that such a $\gamma_n$ must traverse. But then any path connecting, say, the midpoint of $\gamma_{p,q}$ with a point on $X\in Y$ must have length at least $\frac{\ell}2$, and hence cannot be arbitrarily small.

  This concludes the proof.
\end{proof}

The following consequence of (the proof of) \Cref{th:fgenp} answers (negatively) the question of whether metric segments are isometry-$\aleph_0$-generated, asked in \cite[Remark 6.9]{dr}.

\begin{corollary}\label{cor:fgenc}
  The objects in \cat{CCMet} isometry-$\aleph_0$-generated in the enriched sense are precisely the finite discrete metric spaces.
\end{corollary}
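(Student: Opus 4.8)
The plan is to rerun the proof of \Cref{th:fgenp} inside \cat{CCMet}, the one substantive addition being to verify that the auxiliary spaces built there are convex once the input space is.

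\emph{Reduction.} First I would check that the \cat{CCMet}-versions of \Cref{le:fincomp}, \Cref{le:compenough} and hence \Cref{cor:fdistenough} go through verbatim: a finite-metric component of a convex space is convex, a finite coproduct of complete convex spaces is complete and convex, and the full coproduct over all components of a space is again complete and convex, so these coproducts are computed in the same way in \cat{CCMet} as in \cat{CMet}; moreover $[-,Y]$ sends coproducts to products and this commutes with $\aleph_0$-directed colimits of isometries just as over \cat{CMet}. This reduces the ``only if'' direction to proving that a non-empty finite-distance $(X,d_X)\in\cat{CCMet}$ with at least two points is not isometry-$\aleph_0$-generated; the reverse inclusion (finite discrete spaces, being finite coproducts of the one-point space, are isometry-$\aleph_0$-generated) is immediate, exactly as in \Cref{th:fgen} and \Cref{th:fgenp}.

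\emph{Convexity of the construction.} Fixing such an $X$, I would carry out the construction of $(Z,d_Z)$, $(Y,d_Y)$ and $Y_n\subseteq Y$ from the proof of \Cref{th:fgenp}. All the gluings involved are along isometric embeddings of two-point spaces, i.e. have $C=1$, so \Cref{th:largepush} and \Cref{pr:cvxcmpl} apply. The space $Z$ (a unit hair $\gamma_x$ attached to each $x\in X$) is convex by a direct geodesic-splicing check: points on a common hair lie on a sub-segment, and a shortest path between points on distinct hairs, or between a hair-point and a point of $X$, is the concatenation of the obvious sub-segments with a minimizing geodesic in $X$, available because $X$ is convex. The space $Y$ is the convex completion $\overline{Z}^{\cS}$ of \Cref{def:cvxcompl} for the set $\cS$ of newly-added-point pairs on distinct hairs; since $Z$ is already convex, the hypothesis of \Cref{pr:cvxcmpl} is met trivially, so $Y$ is complete and convex. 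Finally $Y_n$, viewed as a subspace of $Y$, is closed, since every point of $Y_n$ lies at distance $\ge 1/n$ from $X$ and, using \Cref{th:largepush} \Cref{item:21} and \Cref{item:27}, one sees that $Y\setminus Y_n$ is open; hence $Y_n$ is complete, and it is convex by the same splicing argument as for $Z$, geodesics between points on distinct $\gamma_{n,x}$ running through the dedicated connecting segment.

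\emph{Conclusion.} Now $Y$ and all the $Y_n$ are objects of \cat{CCMet}, and $(Y_n\hookrightarrow Y)_n$ is an $\aleph_0$-directed colimit of isometries there: its colimit in \cat{Met} is the dense subspace $Y\setminus X=\bigcup_n Y_n$, whose completion is the already-convex $Y$, so $Y$ is simultaneously the colimit in \cat{Met}, \cat{CMet} and \cat{CCMet}. The non-approximability argument that ends the proof of \Cref{th:fgenp} then applies unchanged: a continuous path in $Y_n$ shadowing a fixed path in $X$ between two distinct points must traverse a connecting segment of length bounded below independently of $n$, hence leave a fixed neighbourhood of $X$, so $[X,-]$ fails to preserve this colimit and $X$ is not isometry-$\aleph_0$-generated. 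The hard part will be precisely this convexity and closedness bookkeeping for the $Y_n$: since they contain cycles of connecting segments they fall outside the forest hypothesis of \Cref{th:tree}, so those properties must be pulled out of the multiple-pushout statement \Cref{th:largepush} or checked by hand; everything else is a routine transcription of the \cat{CPMet} argument.
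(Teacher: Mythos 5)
Your proposal is correct and follows the same route as the paper, whose entire proof of \Cref{cor:fgenc} is the one-line observation that when $(X,d)$ is convex the spaces $Y$ and $Y_n$ from the proof of \Cref{th:fgenp} are also convex (e.g.\ by \Cref{pr:cvxcmpl}), so that argument runs unchanged in \cat{CCMet}. You have simply made explicit the reduction step and the convexity/completeness bookkeeping for $Z$, $Y$ and $Y_n$ that the paper leaves implicit.
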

\begin{proof}
  If $(X,d)$ is convex then the spaces $Y$ and $Y_n$ constructed in the proof of \Cref{th:fgenp} are also convex, for instance by \Cref{pr:cvxcmpl}.
\end{proof}



\begin{thebibliography}{10}

\bibitem{ahs}
Ji\v{r}\'{\i} Ad\'{a}mek, Horst Herrlich, and George~E. Strecker.
\newblock Abstract and concrete categories: the joy of cats.
\newblock {\em Repr. Theory Appl. Categ.}, (17):1--507, 2006.
\newblock Reprint of the 1990 original [Wiley, New York; MR1051419].

\bibitem{ar}
Ji\v{r}\'{\i} Ad\'{a}mek and Ji\v{r}\'{\i} Rosick\'{y}.
\newblock {\em Locally presentable and accessible categories}, volume 189 of
  {\em London Mathematical Society Lecture Note Series}.
\newblock Cambridge University Press, Cambridge, 1994.

\bibitem{ar-ap}
Ji\v{r}\'{\i} Ad\'{a}mek and Ji\v{r}\'{\i} Rosick\'{y}.
\newblock Approximate injectivity and smallness in metric-enriched categories.
\newblock {\em J. Pure Appl. Algebra}, 226(6):Paper No. 106974, 30, 2022.

\bibitem{awo}
Steve Awodey.
\newblock {\em Category theory}, volume~52 of {\em Oxford Logic Guides}.
\newblock Oxford University Press, Oxford, second edition, 2010.

\bibitem{bw}
Michael Barr and Charles Wells.
\newblock {\em Toposes, triples and theories}, volume 278 of {\em Grundlehren
  der mathematischen Wissenschaften [Fundamental Principles of Mathematical
  Sciences]}.
\newblock Springer-Verlag, New York, 1985.

\bibitem{blk}
Bruce Blackadar.
\newblock {\em Operator algebras}, volume 122 of {\em Encyclopaedia of
  Mathematical Sciences}.
\newblock Springer-Verlag, Berlin, 2006.
\newblock Theory of $C^*$-algebras and von Neumann algebras, Operator Algebras
  and Non-commutative Geometry, III.

\bibitem{blm-met}
Leonard~M. Blumenthal.
\newblock {\em Theory and applications of distance geometry}.
\newblock Chelsea Publishing Co., New York, second edition, 1970.

\bibitem{boll-grph}
B\'{e}la Bollob\'{a}s.
\newblock {\em Modern graph theory}, volume 184 of {\em Graduate Texts in
  Mathematics}.
\newblock Springer-Verlag, New York, 1998.

\bibitem{bbi}
Dmitri Burago, Yuri Burago, and Sergei Ivanov.
\newblock {\em A course in metric geometry}, volume~33 of {\em Graduate Studies
  in Mathematics}.
\newblock American Mathematical Society, Providence, RI, 2001.

\bibitem{ck}
Alexandru Chirvasitu and Joanna Ko.
\newblock Monadic forgetful functors and (non-)presentability for $c^*$- and
  $w^*$-algebras, 2022.
\newblock http://arxiv.org/abs/2203.12087v2.

\bibitem{dr}
Ivan Di~Liberti and Ji\v{r}\'{\i} Rosick\'{y}.
\newblock Enriched locally generated categories, 2020.
\newblock http://arxiv.org/abs/2009.10980v2.

\bibitem{diest-grph}
Reinhard Diestel.
\newblock {\em Graph theory}, volume 173 of {\em Graduate Texts in
  Mathematics}.
\newblock Springer, Berlin, fifth edition, 2018.
\newblock Paperback edition of [ MR3644391].

\bibitem{crm}
Manfredo Perdig\~{a}o do~Carmo.
\newblock {\em Riemannian geometry}.
\newblock Mathematics: Theory \& Applications. Birkh\"{a}user Boston, Inc.,
  Boston, MA, 1992.
\newblock Translated from the second Portuguese edition by Francis Flaherty.

\bibitem{gk-univ}
Joanna Garbuli\'{n}ska-W\c{e}grzyn and Wies\l~aw Kubi\'{s}.
\newblock A universal operator on the {G}urari\u{\i} space.
\newblock {\em J. Operator Theory}, 73(1):143--158, 2015.

\bibitem{gk-met}
Kazimierz Goebel and W.~A. Kirk.
\newblock {\em Topics in metric fixed point theory}, volume~28 of {\em
  Cambridge Studies in Advanced Mathematics}.
\newblock Cambridge University Press, Cambridge, 1990.

\bibitem{grm}
Misha Gromov.
\newblock {\em Metric structures for {R}iemannian and non-{R}iemannian spaces}.
\newblock Modern Birkh\"{a}user Classics. Birkh\"{a}user Boston, Inc., Boston,
  MA, english edition, 2007.
\newblock Based on the 1981 French original, With appendices by M. Katz, P.
  Pansu and S. Semmes, Translated from the French by Sean Michael Bates.

\bibitem{hlm-hs}
Paul~Richard Halmos.
\newblock {\em A {H}ilbert space problem book}, volume~17 of {\em Encyclopedia
  of Mathematics and its Applications}.
\newblock Springer-Verlag, New York-Berlin, second edition, 1982.

\bibitem{kly}
Gregory~M. Kelly.
\newblock Basic concepts of enriched category theory.
\newblock {\em Repr. Theory Appl. Categ.}, (10):vi+137, 2005.
\newblock Reprint of the 1982 original [Cambridge Univ. Press, Cambridge;
  MR0651714].

\bibitem{kk}
Mohamed~A. Khamsi and William~A. Kirk.
\newblock {\em An introduction to metric spaces and fixed point theory}.
\newblock Pure and Applied Mathematics (New York). Wiley-Interscience, New
  York, 2001.

\bibitem{kub-enr}
Wies{\l}aw Kubi\'s.
\newblock Metric-enriched categories and approximate fraïss\'e limits, 2012.
\newblock http://arxiv.org/abs/1210.6506v3.

\bibitem{lup-fr}
Martino Lupini.
\newblock Fra\"{\i}ss\'{e} limits in functional analysis.
\newblock {\em Adv. Math.}, 338:93--174, 2018.

\bibitem{mcl}
Saunders Mac~Lane.
\newblock {\em Categories for the working mathematician}, volume~5 of {\em
  Graduate Texts in Mathematics}.
\newblock Springer-Verlag, New York, second edition, 1998.

\bibitem{menger}
Karl Menger.
\newblock Untersuchungen \"{u}ber allgemeine {M}etrik.
\newblock {\em Math. Ann.}, 100(1):75--163, 1928.

\bibitem{mnk}
James~R. Munkres.
\newblock {\em Topology}.
\newblock Prentice Hall, Inc., Upper Saddle River, NJ, 2000.
\newblock Second edition of [ MR0464128].

\bibitem{rt}
J.~Rosick\'{y} and W.~Tholen.
\newblock Approximate injectivity.
\newblock {\em Appl. Categ. Structures}, 26(4):699--716, 2018.

\bibitem{wo}
Niels~E. Wegge-Olsen.
\newblock {\em {$K$}-theory and {$C^*$}-algebras}.
\newblock Oxford Science Publications. The Clarendon Press, Oxford University
  Press, New York, 1993.
\newblock A friendly approach.

\end{thebibliography}

\addcontentsline{toc}{section}{References}

\Addresses

\end{document}